\numberwithin{equation}{section}
\newif\ifhinting\hintingtrue 
\newtheorem {theorem}    {Theorem}[section]
\newtheorem {lemma}      [theorem]    {Lemma}
\newtheorem {corollary}  [theorem]    {Corollary}
\newtheorem {proposition}[theorem]    {Proposition}
\theoremstyle{definition}
\newtheorem {definition} [theorem]    {Definition}
\newtheorem {remark}    [theorem]    {Remark}
\newcounter{AbcT}
\numberwithin{equation}{section}
\newcommand {\N} {{\mathbb N}}
\newcommand {\R} {{\mathbb R}}
\newcommand{\Q}{{\mathbb Q}}
\newcommand {\Z} {{\mathbb Z}}
\newcommand{\IGNORE}[1]{}
\renewcommand{\liminf}{\varliminf}
 \DeclareMathOperator{\SL}{SL}
 \DeclareMathOperator{\SO}{SO}
\newcommand\Lie{\operatorname{Lie}}
\newcommand{\ve}{\varepsilon}
\newcommand{\bs}{\boldsymbol}
\begin{document}
\title{A survey and a result on inhomogeneous quadratic forms}

\begin{abstract}
  
  We survey recent work done on the values at integer points of irrational inhomogeneous quadratic forms, namely, inhomogeneous analogues of the famous Oppenheim conjecture. We also prove that the set of such forms in two variables whose set of values at integer points avoids a given countable set not containing zero, has full Hausdorff dimension. Moreover, we consider the more refined variant of this problem, where the shift is fixed and the form is allowed to vary. The strategy is to translate the problems to homogeneous dynamics and deduce the theorems from their dynamical counterparts. While our approach is inspired by the work of Kleinbock and Weiss \cite{KW}, the dynamical results can be deduced from more general results of An, Guan, and Kleinbock \cite{AGK}.   
 
\end{abstract}

\thanks{A.\ G.\ gratefully acknowledges support from a grant from the Infosys foundation to the Infosys Chandrasekharan Random Geometry Centre and a J. C. Bose grant from the ANRF. S. \ D.\ and  A.\ G.\ gratefully acknowledge a grant from the Department of Atomic Energy, Government of India, under project $12-R\&D-TFR-5.01-0500$.}
 

\author{ Sourav Das}

\author{ Anish Ghosh}
 
\address{School of Mathematics, Tata Institute of Fundamental Research, Mumbai, India 400005 }
\email{ iamsouravdas1@gamil.com/sourav@math.tifr.res.in}

\address{School of Mathematics, Tata Institute of Fundamental Research, Mumbai, India 400005 }

\email{ ghosh@math.tifr.res.in}

\maketitle
\section{Introduction}\label{section:intro}
An inhomogeneous quadratic form is a quadratic form accompanied by a shift. Given a quadratic form $Q$ on $\mathbb R^n$ and a vector $\xi \in \mathbb R^n$, the inhomogeneous quadratic form $(Q,\xi)$ (also denoted by $Q_{\xi}$) is defined by
  \begin{equation*}
      Q_{\xi}( y):=Q(y+\xi)\,\, \text{for all}\,\, y\in \mathbb R^n.
  \end{equation*} 

The values at integer points of such quadratic forms have been intensively studied. In addition to their inherent arithmetic qualities, they connect to a wide variety of mathematical subjects. The last ten years or so have especially seen a flurry of activity fueled by the introduction of a variety of new ideas. In this article, we will first survey some of this activity and then prove some results concerning binary inhomogeneous forms.\\

Let us begin with a more classical situation, namely the ``homogeneous" case where $\xi = 0$. If we assume that $Q$ is an indefinite rational quadratic form, then a classical theorem of Meyer states that as long as $Q$ has at least five variables, the equation $Q(y) = 0$ has a non-trivial integer solution. Meyer's theorem is a consequence of the Hasse-Minkowski local-global principle.\\

The British mathematician Oppenheim wondered what the situation would be if one were to consider a similar question for \emph{irrational} quadratic forms. One would certainly not expect Meyer's theorem to hold, but Oppenheim conjectured that it would almost hold. Namely, if $Q$ is an indefinite, irrational quadratic form in at least $3$ variables\footnote{Initially, Oppenheim stated his conjecture for $\geq 5$ variables, in harmony with Meyer's result, but he later upgraded his conjecture to $3$ or more variables.} then for every real number $\alpha$ and for every $\varepsilon > 0$, one can solve the \emph{inequality} $|Q(y) - \alpha| < \varepsilon$ in integer triples $y$. After several developments using the circle method, an important observation of Raghunathan converting Oppenheim's conjecture into a question in homogeneous dynamics enabled Margulis to resolve this conjecture. Margulis's work is a landmark in homogeneous dynamics.\\

It is well known that  $\SL(n, \mathbb{R})/\SL(n, \mathbb{Z})$ can be identified with the space of unimodular lattices in $\mathbb{R}^n$. A subgroup of $\SL(n, \mathbb{R})$ acts on $\SL(n, \mathbb{R})/\SL(n, \mathbb{Z})$ by translation and the resulting dynamics is rich and complicated. It is easy to see that Oppenheim's conjecture can be reduced to $3$ variables. Raghunathan's key observation was the following: Oppenheim's conjecture follows from the statement that any orbit of $\SO(Q)$ (the isotropy group of the quadratic form in $3$ variables, considered a subgroup of $\SL(3, \R)$) on $\SL(3, \mathbb{R})/\SL(3, \mathbb{Z})$  is either closed and carries an $\SO(Q)$-invariant probability measure, or is dense.

  The orthogonal group of such a form is a conjugate of $\SO(2, 1)$, and is generated by unipotent one-parameter subgroups. Inspired by seminal work of Dani, Furstenberg and Veech on the measure and topological rigidity of horocycle flows on hyperbolic surfaces;  Raghunathan conjectured very general topological rigidity statements for such actions, and Dani conjectured very general measure rigidity statements.
  
  Margulis proved an instance of Raghunathan's conjecture for the isotropy group $\SO(Q)$ acting on $\SL(3, \mathbb{R})/\SL(3, \mathbb{Z})$, thereby proving Oppenheim's conjecture.  In full generality, the conjectures of Raghunathan and Dani were proved in a series of landmark papers by Marina Ratner. Returning to quadratic forms, we refer the reader to Borel's survey \cite{B} on Oppenheim's conjecture for a very readable account of the early history of this beautiful story.\\

Modern work on the theme of this article, namely \emph{inhomogeneous forms}, begins with the important work of Marklof \cite{Marklof1, Marklof2} on pair correlation densities of inhomogeneous quadratic forms. This is intimately connected to the statistical behaviour of energy levels of quantum systems. In \cite{Marklof1}, Marklof partly settles a conjecture of Berry and Tabor. More precisely he shows that the 
local two-point correlations of the sequence given by the values $(m - \alpha)^2 + (n - \beta)^2, \text{ with } (m, n) \in \Z^2$, are those of a Poisson process under explicit diophantine conditions on $(\alpha, \beta) \in \R^2$. The methods used are an ingenious combination of theta sums and the Shale-Weil representation and Ratner's theorems on unipotent flows mentioned above. In \cite{Marklof2}, this study is extended to higher dimensions.\\

Henceforth, we will further assume that $Q$ is indefinite and that the pair $(Q, \xi)$ is \emph{irrational}, namely, either $Q$ is not proportional to a quadratic form with integer coefficients, or the vector $\xi$ is an irrational vector, i.e. not proportional to an integer vector. If $Q$ has more than $3$ variables, then an inhomogeneous version of Oppenheim's conjecture can be seen to hold. This is implied from a stronger, quantitative statement in \cite{MM}; alternatively, the reader can read a self contained proof in \cite{BG}, which also contains density results for systems of forms. In other words, for an irrational form $(Q, \xi)$ in at least $3$ variables, and for every real number $\alpha$, the inequality 
\begin{equation}\label{opp:inhom}
|Q_{\xi}(y)- \alpha| < \varepsilon
\end{equation}
can always be solved in integer vectors $y$.\\

 Margulis and Mohammadi \cite{MM} proved a quantitative version of Oppenheim's conjecture for inhomogeneous forms of the above type. To set the stage for their results, we introduce the counting function

\begin{equation}
N_{Q_{\xi},I}(t)=\#\{y \in \mathbb{Z}^n\ |\  Q_{\xi}(y)\in I,\ \|y\|\leq t\},
\end{equation}
where $\|\cdot\|$ is the Euclidean norm on $\mathbb R^n$ and $I\subseteq \mathbb R$ is an interval. In loc. cit. the following theorem is proved. 
\begin{theorem}
For any indefinite, irrational and nondegenerate inhomogeneous quadratic form $Q_{\xi}$ in $n \geq 3$ variables, there is $c_Q>0$ such that 
$$\liminf_{t\to \infty}\frac{N_{Q_{\xi},I}(t)}{t^{n-2}}\geq c_Q |I|,$$
while for $n\geq 5$ the limit exists and equals $c_Q|I|$.
\end{theorem}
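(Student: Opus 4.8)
The plan is to convert the count into a statement about the equidistribution of expanding translates on the space of affine unimodular lattices, adapting to the inhomogeneous setting the template used by Eskin--Margulis--Mozes in the homogeneous case. First I would rewrite the counting function: setting $v=y+\xi$, the quantity $N_{Q_{\xi},I}(t)$ counts points $v$ of the affine lattice $\Z^n+\xi$ with $Q(v)\in I$ and $\|v-\xi\|\le t$; since $\xi$ is a fixed vector, replacing $\|v-\xi\|\le t$ by $\|v\|\le t$ changes the count only by $o(t^{n-2})$, so it suffices to analyze $\#\big((\Z^n+\xi)\cap\Omega_t\big)$, where $\Omega_t=\{v\in\R^n:\ Q(v)\in I,\ \|v\|\le t\}$. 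Choosing $g\in\GL(n,\R)$ with $Q\circ g$ equal to a fixed standard indefinite form $Q_0$ of the same signature and rescaling so that $g^{-1}\Z^n$ becomes unimodular, one reduces to an affine lattice point count for the orbit of $H=\SO(Q_0)^{\circ}$ through a fixed base point $x_{\xi}$ in $\tilde X=(\SL(n,\R)\ltimes\R^n)/(\SL(n,\Z)\ltimes\Z^n)$.

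Next I would record the volume asymptotics $\vol(\Omega_t)=\kappa_Q\,|I|\,t^{n-2}\,(1+o(1))$ for an explicit $\kappa_Q>0$, obtained by a coarea computation in coordinates adapted to $Q_0$; this is where both the exponent $n-2$ and the linear dependence on $|I|$ come from. The core of the argument is the thickening step: one foliates $\Omega_t$ by the quadric slices $\{Q=c,\ \|\cdot\|\le t\}$, $c\in I$, each of which is an $H$-orbit in $\R^n$, fixes a small box $\mathcal{O}$ transverse to $H$ at the identity, and uses the wavefront lemma of Eskin--McMullen together with Dani--Margulis linearization estimates to replace each slice by its $\mathcal{O}$-thickening. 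After division by $\vol(\Omega_t)$, this expresses the normalized count as an average of a Siegel transform $\widehat{f}$ --- with $f$ a fixed bump function built from $\mathds{1}_{\mathcal{O}}$ --- along the expanding family of $H$-orbit arcs through $x_{\xi}$, the error terms being under control as long as these arcs do not run off into the cusp of $\tilde X$.

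The dynamical input is Ratner's measure classification on $\tilde X$ together with the non-divergence estimates of Dani--Margulis and Eskin--Mozes--Shah. Because $(Q,\xi)$ is irrational, the orbit $Hx_{\xi}$ is not confined to a proper closed invariant subset that would obstruct the required equidistribution, while non-divergence ensures the expanding orbit arcs remain, uniformly in $t$ and up to arbitrarily small mass, in a fixed compact part of $\tilde X$. For $n\ge5$ the translated orbit measures converge to the $(\SL(n,\R)\ltimes\R^n)$-invariant probability measure on $\tilde X$, and the Siegel transform $\widehat{f}$ --- although unbounded --- stays uniformly integrable along the family of translates (this is exactly where $n\ge5$ is essential), so the thickening step upgrades to the exact asymptotics $N_{Q_{\xi},I}(t)\sim c_Q\,|I|\,t^{n-2}$, the constant $c_Q$ being assembled from $\kappa_Q$ and the total mass of $\widehat{f}$. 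For $n=3,4$ the uniform integrability can fail and the translated measures need not converge --- in those dimensions the orthogonal groups can display $\SL_2(\R)$- or $\SL_2(\R)\times\SL_2(\R)$-type dynamics causing the normalized count to oscillate --- but testing against compactly supported minorants of $\widehat{f}$ (so unboundedness is harmless) and using that every weak-$*$ subsequential limit of the orbit measures is, by Ratner's classification and the irrationality of $(Q,\xi)$, a probability measure giving the sets that detect $Q(v)\in I$ their full expected mass, one still obtains $\liminf_{t\to\infty} N_{Q_{\xi},I}(t)/t^{n-2}\ge c_Q|I|$.

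The step I expect to be the main obstacle is precisely the control of non-compactness in the unfolding: proving that the expanding $H$-orbit arcs do not lose mass to the cusp of $\tilde X$, uniformly in the translate, and --- for the exact asymptotics when $n\ge5$ --- ruling out partial concentration near proper invariant subvarieties via linearization and $(C,\alpha)$-good estimates for the relevant polynomial maps. Propagating the extra $\R^n$ factor of the affine lattice space through these non-divergence and equidistribution inputs is the main new point relative to the homogeneous Oppenheim argument, but it leaves the overall architecture of the proof unchanged.
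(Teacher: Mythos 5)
This theorem is not proved in the survey itself; it is quoted from Margulis--Mohammadi \cite{MM}, whose argument is precisely the adaptation you outline: the Dani--Margulis lower-bound scheme for $n\geq 3$ and the Eskin--Margulis--Mozes upper-bound scheme for $n\geq 5$, transplanted to the space of affine unimodular lattices $(\SL(n,\R)\ltimes\R^n)/(\SL(n,\Z)\ltimes\Z^n)$ via thickening, Siegel transforms, Ratner's classification, and non-divergence/integrability estimates for translated orbit measures. So your proposal follows essentially the same route as the cited proof, with the genuinely hard content (the affine analogues of the $\alpha$-function integrability and the linearization estimates, which you correctly flag as the main obstacle) being exactly where the work of \cite{MM} lies.
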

The homogeneous counterpart was proved earlier in three landmark works. First,  Dani and Margulis proved a lower bound in \cite{DM1} and subsequently, Eskin, Margulis, and Mozes proved an upper bound in the two papers \cite{EMM1, EMM2}. The proof of Margulis and Mohammadi is a very non-trivial adaptation of these approaches to inhomogeneous forms. We will be concerned with more refined questions in this survey: namely, we will pursue error terms in the count, as well as ``effective" bounds on the size of the integer vector which solves the inequality (\ref{opp:inhom}). We will explain these in greater detail below.

\section{Questions of effectivity}

What is effectivity? We quote the formulation in \cite{GKY1}: for a given $Q_{\xi}, \alpha \in \R$ and $t \geq 1$ large:
\begin{itemize}
\item How small can $|Q_{\xi}(y)- \alpha|$ get for $y \in \Z^n$ with $\|y\| \leq t$ bounded?\\
\item Is it possible to obtain an eﬀective estimate (i.e., with a remainder)
for the counting function $N_{Q_{\xi}, I}(t)$?
\end{itemize}
During the last few years, significant progress has been made on these questions for ternary homogeneous forms; beginning with the important work of Lindenstrauss and Margulis \cite{LinMar}, and the recent breakthrough \cite{LMWY}.  We also refer the reader to the lovely argument in \cite{Kelmer} where effective bounds are obtained for indefinite ternary inhomogeneous quadratic forms. The results quoted above use sophisticated techniques and provide ample indication that effective results are very challenging to prove. They are also quite precise in nature, namely, they provide an explicit Diophantine condition to be satisfied by a ternary homogeneous quadratic form $Q$, under which effective results can be obtained. We note that the situation is also favourable if the number of variables is large because the circle method and its modern variants offer very good effective results, see also \cite{BGHM, SV}. However, Oppenheim type problems can be phrased in far greater generality, and so the effective Oppenheim problem can be said to be generally wide open.\\

In such a situation, it is natural to turn a probabilistic lens on Oppenheim's conjecture. That is, the space of nondegenerate indefinite ternary quadratic forms in $3$ variables of determinant $1$ can be viewed as the homogeneous space $\SO(2,1)\backslash \SL(3, \R)$, and thus has a natural measure on it. One can ask, what proportion of quadratic forms satisfies a version of the quantitative Oppenheim conjecture with a good error rate, or what proportion of quadratic forms satisfies an ``effective" version of Oppenheim's conjecture, especially an optimally effective version? This turns out to be a remarkably robust way of stating the problem. This style of thinking can already be glimpsed in Theorem 2.4 of \cite{EMM1}, which proves an almost sure quantitative result for quadratic forms of signature $(2,1)$ or $(2,2)$ with error terms using an ergodic theorem of Nevo and Stein, and an earlier article of Sarnak \cite{Sa}.\\

Many different density problems can be framed in this manner and over the last decade two distinct strategies have evolved to study the `probabilistic' Oppenheim conjecture in it's many different forms.\\

One approach involves the use of higher moments of Siegel transforms, especially Rogers's seminal work. This was developed by Athreya and Margulis \cite{AM}, and followed by Kelmer and Yu \cite{KY}, \cite{GKY1}, \cite{KS} amongst others. We will discuss this approach through the results in the paper \cite{GKY1} of Ghosh, Kelmer and Yu which studies inhomogeneous forms, the main topic of this paper.\\

The second approach is completely different and involves performing Diophantine analysis using effective ergodic theorems for actions of semisimple groups. This approach was developed by Ghosh, Gorodnik and Nevo in the context of Diophantine approximation on homogeneous varieties of semisimple groups \cite{GGN1, GGN2, GGN3, GGN4, GGN6} and has been applied to a wide variety of Oppenheim type problems involving polynomials in \cite{GGN5} and by Ghosh and Kelmer to quadratic forms in \cite{GKe0, GKe}. We will illustrate this approach using the work \cite{GKY2} of Ghosh, Kelmer and Yu, which again deals with inhomogeneous quadratic forms.\\

Before proceeding, we should frame the effectivity problem for inhomogeneous forms. Due to the presence of two variables, i.e. quadratic form and shift, the problem can be framed in $3$ distinct ways. That is, one could consider the following options.

\begin{itemize}
\item vary $Q$ as well as $\xi$,\\
\item fix $\xi$ and vary $Q$,\\
\item fix $Q$ and vary $\xi$.
\end{itemize}

It turns out that the answer to the first possibility, namely, when one is allowed to vary both parameters, is relatively simple. In other words, one can apply in a straightforward fashion, a version of Rogers's second moment formula on the space of affine lattices (see \cite[Lemma 4]{A} and \cite[formula 3.7]{EMV}) to get the following theorem. 


\begin{theorem}\label{thm:1}
Let $n \geq 3$ and let $0 \leq \kappa < n - 2$. Let $\{I_t\}_{t>0}$ be a decreasing family of
bounded measurable subsets of $\mathbb R$ with measures $|I_t|= ct^{-\kappa}$ for some $c > 0$. Then there is
$\nu > 0$ such that for almost every non-degenerate indefinite quadratic form $Q$ in $n$ variables
there is a constant $c_Q$ such that for almost every $\xi \in \R^n$, we have
$$N_{Q_{\xi},I_t}(t) = c_Q|I_t|t^{n-2} + O_{Q,\xi}(t^{n-2-\kappa-\nu} ).$$
In particular, for any $\kappa < n - 2$, for almost every $\xi \in \R^n$, and for almost every quadratic
form $Q$ as above, the system of inequalities
$$ |Q_{\xi}(y)- \alpha|< t^{-\kappa}, \|y\| \leq t$$
has integer solutions for all suﬃciently large $t$.

\end{theorem}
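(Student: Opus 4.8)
I would rewrite $N_{Q_{\xi},I_t}(t)$ as the number of points of a \emph{varying} affine unimodular lattice in a region of controlled volume, and then run a second moment argument. Since the conclusion is of ``almost everywhere'' type it suffices to work over $\mathcal K\times B$, with $\mathcal K$ a compact family of non-degenerate indefinite forms and $B\subset\R^n$ a bounded box, and to exhaust at the end. Fixing a reference form $Q_0$ and writing $Q=Q_0\circ g$ for $g$ in a compact subset of a measurable section of $\SO(Q_0)\backslash\SL(n,\R)$, the substitutions $z=y+\xi$, $u=gz$ turn the count into
\[
N_{Q_{\xi},I_t}(t)=\#\bigl(\Lambda_{g,\xi}\cap E_{g,\xi,t}\bigr),\qquad
\Lambda_{g,\xi}=g\Z^n+g\xi,\quad
E_{g,\xi,t}=\{u:\ Q_0(u)\in I_t,\ \|g^{-1}u-\xi\|\le t\},
\]
i.e.\ an affine unimodular lattice intersected with the set cut out by a fixed quadratic slab and a $g$-dependent ellipsoidal ball. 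A coarea estimate, using that the level sets $\{Q_0=s\}$ ($s\in I_1$) are all asymptotic to the cone $\{Q_0=0\}$, gives $\vol(E_{g,\xi,t})=c_Q\,|I_t|\,t^{n-2}+O_{\mathcal K}\!\bigl(t^{\,n-3-\kappa}\bigr)$ uniformly (the bounded shift $\xi$ only perturbs the ball by $O(1)$ and contributes to the error term), where $c_Q>0$ is the volume density of $Q$ with respect to the Euclidean ball, a function of $Q$ alone, matching the statement.

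\textbf{Analytic input.} The engine is Rogers' second moment formula on the space $X=\mathrm{ASL}(n,\R)/\mathrm{ASL}(n,\Z)$ of affine unimodular lattices, which in the affine case carries no congruence corrections: for $f\in L^1\cap L^2(\R^n)$ one has $\int_X\widehat f(\Lambda)^2\,\mathrm{d}\mu(\Lambda)=\bigl(\int_{\R^n}f\bigr)^2+\int_{\R^n}f^2$, so that $\mathrm{Var}_X\bigl(\widehat{\mathbf{1}_A}\bigr)=\int_{\R^n}\mathbf{1}_A=\vol(A)$. This is the formula invoked from \cite{KY}; it follows by unfolding the diagonal term and applying Siegel's mean value formula on $\SL(n,\R)/\SL(n,\Z)$ (using $n\ge2$) to the autocorrelation $x\mapsto\int_{\R^n}f(u+x)f(u)\,\mathrm{d}u$ for the off-diagonal term.

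\textbf{From the variance bound to the theorem.} Fix a small $\nu>0$ and work along the scales $t_k$ with $t_{k+1}=t_k(1+t_k^{-\nu})$. Because $E_{g,\xi,t}$ depends on the lattice through $g^{-1}$, Rogers' formula does not apply to it directly; I would instead, at each scale, cover the compact $g$-parameter set (dimension $d$) by $O(t_k^{\,\nu d})$ balls of radius $t_k^{-\nu}$ with centres $g_i$, and sandwich
\[
E^-_{i,k}\subseteq E_{g,\xi,t_k}\subseteq E^+_{i,k},\qquad
E^{\pm}_{i,k}=\bigl\{u:\ Q_0(u)\in I_{t_k},\ \|g_i^{-1}u\|\le(1\pm Ct_k^{-\nu})t_k\pm C\bigr\},
\]
with $\vol(E^{\pm}_{i,k})=c_Q\,|I_{t_k}|\,t_k^{\,n-2}\bigl(1+O(t_k^{-\nu})\bigr)$. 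Chebyshev applied to the variance bound for each fixed set $E^{\pm}_{i,k}$ with threshold $\lambda_k=t_k^{\,n-2-\kappa-\nu}$ shows that the set of $\Lambda\in X$ with $\bigl|\widehat{\mathbf{1}_{E^{\pm}_{i,k}}}(\Lambda)-\vol(E^{\pm}_{i,k})\bigr|>\lambda_k$ has measure $O\bigl(\vol(E^{\pm}_{i,k})\lambda_k^{-2}\bigr)=O\bigl(t_k^{-(n-2-\kappa)+2\nu}\bigr)$; summing over the $O(t_k^{\,\nu d})$ balls, the two signs, and over $k$ (with $O(T^{\nu})$ values of $k$ in each dyadic window $t_k\in[T,2T]$) gives a convergent series as long as $\nu(d+3)<n-2-\kappa$, which is possible since $\kappa<n-2$. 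Borel--Cantelli then gives, for a.e.\ $(g,\xi)\in\mathcal K\times B$ and all large $k$, $\#\bigl(\Lambda_{g,\xi}\cap E^{\pm}_{i,k}\bigr)=\vol(E^{\pm}_{i,k})+O(\lambda_k)=c_Q\,|I_{t_k}|\,t_k^{\,n-2}+O\bigl(t_k^{\,n-2-\kappa-\nu}\bigr)$, hence the same estimate for $N_{Q_{\xi},I_{t_k}}(t_k)$, which is trapped between these; a monotonicity interpolation over $t\in[t_k,t_{k+1}]$ (the endpoint main terms differing by a relative $O(t^{-\nu})$, since $I_t$ decreases and the radius increases) extends it to all $t$. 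Undoing the reductions, and noting that by Fubini the exceptional set is a $Q$-null set together with, for each surviving $Q$, a $\xi$-null set, yields the asserted asymptotic with this $\nu$. For the ``in particular'' statement, taking $I_t=(\alpha-t^{-\kappa},\alpha+t^{-\kappa})$ gives $N_{Q_{\xi},I_t}(t)=2c_Q\,t^{\,n-2-\kappa}+O(t^{\,n-2-\kappa-\nu})\to\infty$ because $n-2-\kappa>0$ and $c_Q>0$, so the inequality has integer solutions for all large $t$.

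\textbf{Expected main obstacle.} The only real difficulty is the lattice-dependence of the counting region: the covering must be fine enough (mesh $\asymp t^{-\nu}$) to recover the sharp constant $c_Q$, yet coarse enough that the union bound over the $O(t^{\,\nu d})$ pieces still beats Chebyshev — this balance is exactly what forces $\nu$ small and uses $\kappa<n-2$. The remaining ingredients (the coarea volume asymptotics, the affine second moment formula, the monotonicity interpolation) are routine.
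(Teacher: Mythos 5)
Your proposal is correct in outline and follows essentially the same route the paper indicates for this theorem, which the survey does not prove in detail but attributes to \cite{GKY1} as following from the methods of \cite{KY}: an affine (Rogers-type) second moment formula on the space of affine unimodular lattices, careful volume asymptotics for the regions cut out by the form, the shrinking target and the ball, and a Chebyshev/Borel--Cantelli argument with a discretization in the parameter to convert the variance bound into an almost-everywhere counting asymptotic. Your handling of the lattice-dependence of the counting region by sandwiching and covering, and the interpolation between scales, matches the standard implementation of that strategy, so there is no substantive divergence from the paper's (cited) approach.
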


The epithet ``simple" should be taken with a large grain of salt. The development and use of an affine version of Rogers's formula is by no means easy. Additionally, one needs to carefully estimate the volume of certain regions, which, again, is far from routine. What is meant is that Theorem \ref{thm:1} more or less follows, once the strategy to prove the corresponding result for homogeneous forms is known.\\  

The situation in which one of the parameters is fixed does not fit into this mould and turns out to be much more delicate. 

\subsection{Fixed rational shifts}
We now fix $\xi\in \mathbb{Q}^n$, and allow ourselves to vary $Q$. In order to study this, the following observation was key to the strategy employed by Ghosh, Kelmer and Yu. If $\xi = \frac{\mathbf{p}}
{q}$ is a rational shift, then the values of the
shifted form at integer points
$$ Q_{\xi}(y) = Q(y + \frac{\mathbf{p}}{q} ) = q^{-2}Q(qy + \mathbf{p}),$$
are just a scaling by $\frac{1}{q^2}$ of the values of the homogeneous form $Q$ evaluated on integer points that satisfy a congruence condition modulo $q$. The main technical innovation in loc. cit. is the calculation of the second moment of a Siegel transform on a congruence quotient of the special linear group. The latter has already yielded other applications (cf. \cite{AGH, AGY}). This formula yields the following counting formula and effective result for inhomogeneous forms with fixed rational shift.

\begin{theorem}\label{thm:2}
Let $n\geq 3$ and let $0\leq\kappa<n-2$. Let $\xi\in \mathbb{Q}^n$ be a fixed rational vector. Let $\{I_t\}_{t> 0}$ be a decreasing family of bounded measurable subsets of $\mathbb{R}$ with measures $|I_t|=ct^{-\kappa}$ for some $c>0$. Then there is $\nu>0$ such that for almost every non-degenerate indefinite quadratic form $Q$ in $n$ variables, there is a constant $c_Q>0$ such that 
\begin{equation}\label{equ:countingresult}
N_{Q_{\xi,I_t}}(t)=c_Q |I_t|t^{n-2}+O_{Q,\xi}(t^{n-2-\kappa-\nu}).
\end{equation}
In particular, for any $\kappa<n-2,$ for every $\xi\in\mathbb{Q}^n,$ and for almost every quadratic form $Q$ as above, the system of inequalities 
\begin{equation} \label{equ:maininequ} 
|Q_{\xi}(y)-\alpha|<t^{-\kappa},\; \|y\|\leq t
\end{equation}
has integer solutions for all sufficiently large $t$.
\end{theorem}

Moreover, Theorem \ref{thm:2} yields the following Corollary.

\begin{corollary}
Let $n \geq 3$ and let $0 \leq \kappa < n- 2$ and let $q \in \N$. Let $\{I_t\}_{t>0}$ be a decreasing
family of bounded measurable subsets of $\R$ with measures $|I_t|= ct^{-\kappa}$ for some $c > 0$. Then there is $\nu > 0$ such that for almost every non-degenerate indefinite quadratic form $Q$ in $n$
variables, there is a constant $c_Q > 0$ such that for all $\mathbf{p} \in (\Z/q\Z)^n$,
$$\#\{w \in \Z^n~|~ Q(w) \in I_t, w \equiv \mathbf{p} ~(\text{ mod } q), \|w\|< t\} = c_Q|I_t|t^{n-2} + O_{Q,\nu,q}(t^{n-2-\kappa-\nu}).$$
\end{corollary}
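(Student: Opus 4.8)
The plan is to deduce this corollary from the previous theorem (the one with fixed rational shift) essentially by unwinding the identity $Q_\xi(y) = q^{-2} Q(qy + \mathbf{p})$ for $\xi = \mathbf{p}/q$. First I would fix $q \in \N$ and, for each residue class $\mathbf{p} \in (\Z/q\Z)^n$, choose a representative (still called $\mathbf{p}$) in $\{0, 1, \dots, q-1\}^n \subseteq \Z^n$, and set $\xi = \mathbf{p}/q \in \Q^n$. Since $(\Z/q\Z)^n$ is finite, applying the fixed-rational-shift theorem to each of these finitely many shifts $\xi$ and intersecting the resulting full-measure sets of quadratic forms $Q$ produces a single full-measure set on which the conclusion holds simultaneously for all $\mathbf{p}$, with a common $\nu$ (take the minimum over the finitely many $\nu$'s produced) and a common implied constant absorbing the dependence on $q$ and on the finitely many $\xi$'s.

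Next I would translate the counting functions. Fix such a $Q$ and a residue $\mathbf{p}$. The key observation is that $w \in \Z^n$ satisfies $w \equiv \mathbf{p} \pmod q$ if and only if $w = qy + \mathbf{p}$ for a unique $y \in \Z^n$, and under this substitution $Q(w) = q^2 Q_\xi(y)$ and $\|w\| = \|qy + \mathbf{p}\|$. Hence $Q(w) \in I_t$ is equivalent to $Q_\xi(y) \in q^{-2} I_t =: J_t$, an interval (or measurable set) of measure $q^{-2}|I_t| = (c q^{-2}) t^{-\kappa}$, which is again of the required form with a new constant. The norm constraint $\|w\| < t$ is slightly off: it becomes $\|qy + \mathbf{p}\| < t$ rather than $\|y\| < t'$ for a clean $t'$. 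I would handle this by sandwiching: since $\mathbf{p}$ is bounded (its entries lie in $[0, q)$), we have $q\|y\| - \sqrt{n}\,q \le \|qy + \mathbf{p}\| \le q\|y\| + \sqrt{n}\,q$, so $\{w : \|w\| < t, w \equiv \mathbf{p}\}$ is sandwiched between the sets corresponding to $\|y\| \le (t - \sqrt n q)/q$ and $\|y\| \le (t + \sqrt n q)/q$. Applying the fixed-shift theorem at these two shifted radii and subtracting, the main terms are $c_Q |J_t| \big((t \pm \sqrt n q)/q\big)^{n-2}$, whose difference from $c_Q |J_t| (t/q)^{n-2}$ is $O(|J_t| t^{n-3}) = O(t^{n-2-\kappa}/t)$, comfortably within the allowed error $O(t^{n-2-\kappa-\nu})$ after possibly shrinking $\nu$; and $c_Q |J_t| (t/q)^{n-2} = c_Q q^{-2} |I_t| q^{-(n-2)} t^{n-2} = (c_Q q^{-n})|I_t| t^{n-2}$, so the new constant is $c_Q q^{-n}$ (renamed $c_Q$, now depending on $q$), while the error from the theorem itself, $O_{Q,\xi}(((t\pm\sqrt n q)/q)^{n-2-\kappa-\nu})$, is $O_{Q,\nu,q}(t^{n-2-\kappa-\nu})$.

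The only genuinely delicate point is the radius mismatch just described, i.e. that the ball $\|w\| \le t$ does not pull back to a ball in $y$ centered at the origin; everything else is bookkeeping. I would emphasize that the sandwiching works precisely because the discrepancy $\sqrt n q$ between the inner and outer radii is a constant (depending only on $q$ and $n$), so the number of lattice points in the annular difference is $O_{n,q}(t^{n-1})$ trivially, and restricted to $Q(w) \in I_t$ it is $O(|I_t| t^{n-3} + \text{lower order})$ by the theorem's own estimate applied at the two radii. One should also double-check that $q^{-2} I_t$ is a decreasing family of bounded measurable sets — it is, since scaling by a fixed positive constant preserves all these properties — so the hypotheses of the invoked theorem are genuinely met. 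Finally, I would note that the dependence of the implied constant on $\mathbf{p}$ is harmless since there are only $q^n$ residue classes, so taking a maximum over them keeps the constant of the form $O_{Q,\nu,q}$.
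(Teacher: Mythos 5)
Your derivation is correct and follows exactly the route the paper intends: the survey offers no separate proof of this corollary, presenting it as a direct consequence of the fixed-rational-shift theorem (equivalently, of the congruence-quotient second moment formula of \cite{GKY1}) via the identity $Q_{\mathbf{p}/q}(y)=q^{-2}Q(qy+\mathbf{p})$ highlighted just before the theorem, which is precisely your substitution $w=qy+\mathbf{p}$ combined with finiteness of the residue classes. The only points to note are cosmetic: applying the theorem at the shifted radii $(t\pm\sqrt{n}\,q)/q$ is a harmless reparametrization of the family $\{I_t\}$ (your $O(|J_t|t^{n-3})$ bound already controls the resulting discrepancy), and one uses that the main-term constant $c_Q$ in the theorem does not depend on the rational shift, as the paper's notation indicates.
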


The situation of a fixed irrational shift is not very satisfactory. Currently, the following theorem, also proved in \cite{GKY1} is state of the art but is sub-optimal and depends on approximating the fixed irrational shift using rational shifts. 

\begin{theorem}\label{thm:3}
Let $n \geq 5$ and let $\xi \in \R^n$ be an irrational vector with Diophantine exponent $\omega_{\xi}$ and uniform Diophantine exponent $\tilde{\omega}_{\xi}$. Assume that $\omega_{\xi} < \infty$ and that $\tilde{\omega}_{\xi} >\frac{2}{n-2}$. Then for any $\kappa \in \left(0, \frac{(n-2)\tilde{\omega}_{\xi} - 2}{n(1 + \omega_{\xi} + \tilde{\omega}_{\xi}\omega_{\xi})} \right)$, for any $\alpha \in \R$ and for almost every
non-degenerate indefinite quadratic form $Q$ in $n$ variables, the system of inequalities (\ref{equ:maininequ}) has integer solutions for all sufficiently large $t$.
\end{theorem}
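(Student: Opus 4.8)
The plan is to deduce the statement from the already-established fixed-\emph{rational}-shift result by approximating the irrational vector $\xi$ by a rational vector depending on the target scale $t$, and to control the error of this approximation by the Diophantine exponents of $\xi$. Fix $\alpha\in\R$ and $\kappa$ in the stated range and let $t$ be large. First I would use the elementary quadratic identity: for any rational vector $\bp/q$,
\begin{equation*}
Q_\xi(y)=\frac{1}{q^{2}}\,Q(qy+\bp)+E_{\bp/q}(y),\qquad E_{\bp/q}(y):=Q(y+\xi)-Q\!\left(y+\tfrac{\bp}{q}\right),
\end{equation*}
and, writing $B$ for the symmetric bilinear form attached to $Q$ and $u=\xi-\bp/q$, one has $E_{\bp/q}(y)=2B(y+\bp/q,u)+Q(u)$, so $|E_{\bp/q}(y)|\ll_{Q}(\|y\|+1)\|u\|$ as soon as $\|u\|\le1$. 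Hence, to solve $|Q_\xi(y)-\alpha|<t^{-\kappa}$ with $\|y\|\le t$, it suffices to produce $w\in\Z^{n}$ with $w\equiv\bp\pmod q$, $\|w\|\le\tfrac12 qt$ and $Q(w)\in\big(q^{2}\alpha-\tfrac12q^{2}t^{-\kappa},\,q^{2}\alpha+\tfrac12q^{2}t^{-\kappa}\big)$, provided $(\bp,q)$ has been chosen so that $t\|u\|\le\tfrac12 t^{-\kappa}$; one then sets $y=(w-\bp)/q$, which is integral by the congruence.

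The choice of $(\bp,q)$ is driven by the two Diophantine exponents. I would fix a scale $N=N(t)$, a small positive power of $t$ to be optimized at the end, and use the definition of the uniform exponent to obtain $q=q(t)\le N$ and $\bp=\bp(t)\in\Z^{n}$ with $\|q\xi-\bp\|\le N^{-\tilde{\omega}_\xi+\ep}$, so that $\|u\|\le N^{-\tilde{\omega}_\xi+\ep}/q$; the key point is that such an approximation is available at \emph{every} large scale, which is what will eventually give solutions for all large $t$ rather than merely along a sparse sequence. The finiteness of the ordinary exponent $\omega_\xi$ enters in the opposite direction: it bounds how fast the successive usable denominators $q(t)$ can grow, and thereby guarantees that the interval of values of $t$ handled by one pair $(\bp(t),q(t))$ overlaps that handled by the next, so that a continuity/monotonicity argument covers all large $t$. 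Substituting $\|u\|\le N^{-\tilde{\omega}_\xi+\ep}/q$ into the constraint $t\|u\|\ll t^{-\kappa}$ yields the first relation among $N$, $q$ and $\kappa$.

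It then remains to count the points $w$ above, and here I would feed the congruence data $(\bp,q)$ into the fixed-rational-shift counting result (equivalently, its congruence-counting corollary) applied to the homogeneous form $Q$: for almost every $Q$ there are $c_{Q}>0$ and $\nu>0$ so that the relevant count equals $c_{Q}\,t^{\,n-2-\kappa}$ plus an error term which — and this is the delicate point — carries an explicit power $q^{A}$, $A=A(n)$, stemming from the second-moment formula for the Siegel transform on the congruence quotient $\SL(n,\R)/\Gamma(q)$; one cannot invoke the corollary as a black box, since there the implied constant is allowed to depend on $q$, whereas here $q=q(t)\to\infty$. The main term tends to infinity because $\kappa<n-2$, so one must check that the error term is of strictly smaller order, i.e. that the factor $q^{A}$ is swallowed by the equidistribution saving $t^{-\nu}$ after the rescaling $w=qy+\bp$. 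Combining this with the constraint from the previous paragraph and optimizing $N(t)$ produces precisely the admissible interval $\kappa\in\big(0,\,\frac{(n-2)\tilde{\omega}_\xi-2}{n(1+\omega_\xi+\tilde{\omega}_\xi\omega_\xi)}\big)$; the hypothesis $\tilde{\omega}_\xi>\tfrac{2}{n-2}$ is exactly what makes the numerator positive, and since the uniform exponent of an irrational vector is at most $1$, this in turn forces $n\ge5$.

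The hard part will be exactly this $q$-dependence: the dynamical input needed — an effective second-moment / equidistribution estimate on $\SL(n,\R)/\Gamma(q)$ with polynomial control in $q$ — is in principle available from the machinery behind the rational-shift theorem, but it has to be tracked carefully through the rescaling $w=qy+\bp$ and balanced against the loss $\|u\|\approx N^{-\tilde{\omega}_\xi}/q$ incurred in approximating $\xi$ by $\bp/q$. It is this bookkeeping, rather than any genuinely new idea, that produces the (admittedly non-optimal) dependence of the admissible range of $\kappa$ on both $\omega_\xi$ and $\tilde{\omega}_\xi$, and that is responsible for the restriction to five or more variables.
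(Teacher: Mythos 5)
Your outline takes essentially the same route as the paper's: the survey gives no self-contained proof of this theorem but states that the argument of \cite{GKY1} proceeds precisely by approximating the fixed irrational shift by rational shifts and then invoking the effective counting for rational shifts/congruence classes, with the uniform exponent $\tilde{\omega}_{\xi}$ supplying approximations at every scale and the finiteness of $\omega_{\xi}$ controlling the usable denominators, which is exactly what you propose (including the correct observation that the corollary cannot be used as a black box because $q=q(t)\to\infty$, so one needs the polynomial-in-$q$ error from the second-moment formula on congruence quotients). Note only that the decisive quantitative step --- the $q$-explicit error bound and the optimization of $N(t)$ yielding the exact admissible exponent $\frac{(n-2)\tilde{\omega}_{\xi}-2}{n(1+\omega_{\xi}+\tilde{\omega}_{\xi}\omega_{\xi})}$ --- is asserted rather than carried out in your sketch, though you have correctly identified it as the crux.
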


We refrain from defining the uniform Diophantine exponent here, referring the reader to section 5 of \cite{GKY2}. The condition $n \geq 5$ has been imposed in order to ensure that the set of irrational vectors satisfying the Diophantine conditions in Theorem \ref{thm:3} is non-empty. The exponent obtained in Theorem \ref{thm:3} is also far from optimal. The study of rational forms with fixed irrational shift remains a challenging open problem.\\ 

The problem of fixing a rational quadratic form and letting the shift vary is of a completely different flavour and consequently needs completely different methods to deal with. This is accomplished in \cite{GKY2} and the method of attack involves using the ergodic theory of semisimple groups, an approach earlier used in \cite{GGN5} and \cite{GKe0,GKe}.

\begin{theorem}\label{thm:4}
For any rational indefinite form $Q$ in $n$ variables and any $\alpha\in \mathbb{R},$
there is $\kappa_0$ (depending only on the signature of $Q$) such that for any $\kappa<\kappa_0,$  for almost all $\xi\in \mathbb{R}^n$ the system of inequalities $$|Q(y+\xi)-\alpha|<t^{-\kappa},\quad \|y\|<t$$
has integer solutions for all sufficiently large $t$.
\end{theorem}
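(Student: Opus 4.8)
The plan is to translate the Diophantine statement into a question about orbits in a homogeneous space and then apply an effective equidistribution (or effective ergodic) theorem for the relevant semisimple group. Concretely, fix a rational indefinite form $Q$ in $n$ variables with $\SO(Q)$ a semisimple $\Q$-group, and let $H = \SO(Q)(\R)$ act on the space $X = \SL(n,\R) \ltimes \R^n / \SL(n,\Z) \ltimes \Z^n$ of affine unimodular lattices. The condition ``$|Q(y+\xi) - \alpha| < t^{-\kappa}$ has an integer solution $y$ with $\|y\| < t$'' says that the affine lattice $\Z^n + \xi$, after applying a suitable one-parameter expanding element $a_s \in H$ with $e^s \asymp t$, contains a point in a fixed (roughly) bounded region determined by the level set $\{Q = \alpha\}$ and the shape of the norm ball. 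So the task becomes: show that for almost every $\xi$, the orbit $\{a_s (\Z^n + \xi)\}$ hits this target region with an \emph{effective} rate, for all large $s$.

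First I would set up the dictionary precisely: parametrize the shift $\xi$ by a point in $H/(H\cap \Gamma_\xi)$ or, more robustly, realize the family $\{\Z^n+\xi : \xi \in \R^n\}$ as a fibre of the projection $X \to \SL(n,\R)/\SL(n,\Z)$ over the point $\Z^n$ — this fibre is a torus $\R^n/\Z^n$ carrying Lebesgue measure, and ``almost every $\xi$'' is exactly ``almost every point of this torus.'' Second, I would invoke the effective ergodic theorem for the action of the semisimple group $H$ (with its property $(T)$ or at least a spectral gap coming from the ambient $\SL_n$) on $L^2$ of the relevant homogeneous space, in the spirit of Ghosh–Gorodnik–Nevo \cite{GGN5} and Ghosh–Kelmer \cite{GKe0, GKe}: averaging a well-chosen test function (a smoothed indicator of the target region, whose measure is $\asymp t^{-\kappa}\cdot t^{n-2}$-ish after the $H$-action) over a ball of radius $s$ in $H$ gives the main term plus an error that decays like a power of the measure, uniformly. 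Third, I would use a Borel–Cantelli / dense-times argument along a geometric sequence $t_k = 2^k$ together with a continuity (shadowing) estimate to upgrade ``for almost every $\xi$, solvable along $t_k$'' to ``for almost every $\xi$, solvable for all sufficiently large $t$,'' which is where the strict inequality $\kappa < \kappa_0$ and the precise shape of $\kappa_0$ in terms of the signature (hence the spectral gap / temperedness of the $H$-representation) enter.

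The value $\kappa_0$ should come out of balancing the volume of the target region against the quality of the spectral gap for $H = \SO(p,q)$ acting on the affine-lattice space: for signature $(p,q)$ the complementary series contributions degrade the $L^2$-decay, and $\kappa_0$ is the largest exponent for which the error term still beats the main term after the Borel–Cantelli summation. I expect the genuinely hard part to be obtaining the effective estimate \emph{on the affine-lattice space rather than on $\SL_n/\SL_n(\Z)$} — one needs the relevant Siegel-type transform of the smoothed target to lie in a suitable Sobolev space and one needs the matrix coefficients of the $H$-action to decay at a controlled rate on the cusp neighborhoods, so some truncation/height control (a quantitative non-divergence statement for the $H$-orbit of $\Z^n + \xi$) must be folded in. A secondary subtlety is that fixing $\alpha \neq 0$ and $\alpha = 0$ may require slightly different target geometries (a thickened quadric vs.\ a thickened cone), but both are handled by the same averaging machinery. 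Once these ingredients are in place, the theorem follows by applying the dynamical statement — which, as the introduction notes, can be extracted from the general effective results of \cite{GGN5} and the approach of \cite{GKY2} — and translating back.
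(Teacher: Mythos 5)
Your overall strategy is the one the paper attributes to \cite{GKY2} (ergodic theory of the semisimple isotropy group, an effective mean ergodic theorem whose rate comes from a spectral gap, then Borel--Cantelli), but there is a genuine gap at the crucial step. The conclusion is ``for almost every $\xi$,'' and in your setup the shifts form the torus fibre $\R^n/\Z^n$ over the point $\Z^n$, which is a \emph{null} subset of the affine-lattice space $X=\SL(n,\R)\ltimes\R^n/\SL(n,\Z)\ltimes\Z^n$. An effective mean ergodic theorem in $L^2(X)$, combined with Chebyshev and Borel--Cantelli along $t_k=2^k$, only yields a statement for Haar-almost every point of $X$; no shadowing or continuity argument transfers that to almost every point of a measure-zero fibre. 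This is exactly where the paper's route differs: since $Q$ is rational, $\Gamma=\SO^{+}_Q(\Z)$ is a lattice in $G=\SO^{+}_Q(\R)$, and one works on the much smaller space $\tilde G/\tilde\Gamma$ with $\tilde G=G\ltimes\R^n$, $\tilde\Gamma=\Gamma\ltimes\Z^n$, proving the effective mean ergodic theorem for averages over norm balls $G_t\subseteq G$ acting on $L^2_{00}(\tilde G/\tilde\Gamma)$, i.e.\ on functions with zero average along the torus fibres. Restricting to $L^2_{00}$ (equivalently, to the nonzero Fourier modes in the fibre direction) is precisely what allows one to control the variance of the orbit averages \emph{along the shift torus itself}, via a spectral transfer principle; your proposal never addresses how the ambient-space estimate reaches the fibre, and without that the argument does not close. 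Note also that the rationality of $Q$ must enter essentially (it is what makes $\tilde\Gamma$ a lattice in $\tilde G$), whereas in your outline it plays no real role.

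Two further points of divergence worth flagging. First, the technical difficulties you anticipate (Sobolev control of a Siegel-type transform on $X$, quantitative non-divergence near the cusp of the big space) are the ones relevant to the \emph{generic-form} results proved by second-moment/Rogers-type methods, not to this fixed-form, generic-shift theorem; the paper emphasizes that this case requires a different method. Second, the explicit $\kappa_0$ does not arise from balancing a target volume against a Borel--Cantelli summation on the big space: it is $\kappa_0=2\kappa_1 q(p-1)$ with $\kappa_1$ determined by the \emph{strong spectral gap} (integrability exponents) of the automorphic representation of $\SO(p,q)$ on $L^2_{00}(\tilde G/\tilde\Gamma)$, computed case by case in the signature; this is why $\kappa_0$ depends only on the signature and is optimal for $n=3,4$. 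Your intuition that complementary series degrade the exponent is in the right spirit, but the actual determination of $\kappa_0$ requires this representation-theoretic input, which your outline leaves unspecified.
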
 
The proof of Theorem \ref{thm:4} gives the following explicit values for $\kappa_0$, depending only on the signature of  $Q$ 
$$\kappa_0= \left\lbrace\begin{array}{ll} 
1 & (p,q)=(2,1)\\2 & (p,q)=(n - 1,1),\,\, n\geq 4\\
2& (p,q)=(2,2)\\
3/2 & (p,q)=(4,2) \mbox{ or } (3,3)\\
5/2 & (p,q)=(6,3),
\end{array}\right.
$$
while for all other signatures $p\geq q>1$ with $p+q=n$ we have $\kappa_0=2\kappa_1q(p-1)$ with 
$$\kappa_1= \left\lbrace\begin{array}{ll} 
\tfrac{1}{n} & n \equiv0\pmod{4}\\
\frac{1}{n-1} & n\equiv 1\pmod{4}\\
\tfrac{1}{n-2}& n\equiv 2\pmod{4}\\
\tfrac{1}{n+1} & n\equiv 3\pmod{4}.
\end{array}\right.
$$

Theorem \ref{thm:4} gives the optimal rate for $n = 3$ and $4$, namely  $\kappa_0 = n - 2$. However, when $n \geq 5$ is not as good as what we know for generic forms. We now explain briefly the significance of $\kappa_0$ and how it appears in Theorem \ref{thm:4}. The starting point of the ``ergodic" approach to studying effective results for quadratic forms is the realization that the isotropy group of the form is a semisimple group, for which mean ergodic theorems with rates can be proved. Let $G = \SO^{+}_Q(\R)$ denote the connected component of the identity in the isotropy group of $Q$. If $Q$ is a rational form, then $\Gamma = \SO^{+}_Q(\Z)$ is a lattice in $G$. Using the natural embedding of $G$ in $\SL(n, \R)$, we get an action of $G$ on $\R^n$ and we may consider the semi-direct product $\tilde{G} =G \ltimes \R^n$. Note that $ \tilde{\Gamma} = \Gamma \ltimes \Z^n$ is a non-uniform lattice in $\tilde{G}$ and that there is a natural left action of $G$ on the space $L^2( \tilde{G}/\tilde{\Gamma})$ preserving the probability Haar measure $m_{\tilde{G}}$ on $\tilde{G}/\tilde{\Gamma}$.

For any $f\in L^2(\tilde{G}/\tilde{\Gamma})$ and growing measurable subsets $G_t\subseteq G$ consider the averaging operator
\begin{equation}
\beta_{G_t}f(x)=\frac{1}{m_G(G_t)}\int_{G_t}f(g^{-1}x)dm_G(g),
\end{equation}
where $m_G$ denotes the probability measure on $G/\Gamma$ coming from the Haar measure on $G$. An effective mean ergodic theorem of the following kind
$$\left\|\beta_{G_t}f-\int_{\tilde{G}/\tilde{\Gamma}}fdm_{\tilde{G}} \right\|_2 \leq C \frac{\|f\|_2}{m_G(G_t)^\kappa},$$
valid for all $f\in L^2(\tilde{G}/\tilde{\Gamma})$, is intimately related to the so-called ``shrinking target problem" for the action of $G$ on $\tilde{G}/\tilde{\Gamma}$. 
 In \cite{GKY2}, it is shown that Theorem \ref{thm:4} follows from the following result.

\begin{theorem}\label{thm:5}
Let $Q$ be an indefinite rational form of signature $(n - q,q)$ and let $\kappa_1 = \frac{\kappa_0}{2q(n-q-1)}$ with $\kappa_0$ as above. There is a family of growing norm balls $G_t \subseteq \SO^{+}_Q(\R)$ of measure $m_G(G_t) \gg t^{q(n-q-1)}$ such that for any $\kappa < \kappa_1$ and for any $f \in L^{2}_{00}(\tilde{G}/\tilde{\Gamma})$ (i.e., the set of all functions whose average over $\mathbb R^n/\mathbb Z^n$ is zero), we have that $$\|\beta_{G_t} f\|_{2}\ll_{\kappa} \frac{\|f\|_2}{m_G(G_t)\kappa} $$
where the implied constant depends only on $\kappa$.
\end{theorem}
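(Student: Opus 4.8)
The plan is to follow the ``ergodic'' route sketched above: decompose $L^2_{00}(\tilde G/\tilde\Gamma)$ into $G$-subrepresentations by Fourier analysis along the toral fibres of the bundle $\tilde G/\tilde\Gamma\to G/\Gamma$, establish a \emph{uniform} decay rate for the matrix coefficients of all the pieces, and then feed this into a quantitative mean ergodic theorem for the operators $\beta_{G_t}$. For the family $G_t$ one takes norm balls $G_t=\{g\in G:\|g\|\le t\}$ for a fixed norm on $\SL(n,\R)$; with the convention $q\le n-q$, a standard computation with the Cartan decomposition of $G=\SO^+(n-q,q)$ (whose split rank is $q$) shows that this family is admissible and well-rounded with $m_G(G_t)\asymp t^{q(n-q-1)}$ (up to a logarithmic factor when $n=2q$), so in particular $m_G(G_t)\gg t^{q(n-q-1)}$; write $v(t)=m_G(G_t)$. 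The bundle $\tilde G/\tilde\Gamma\to G/\Gamma$ has fibre $\R^n/g\Z^n$ over $g\Gamma$, and Fourier-expanding along the fibres yields a $G$-equivariant orthogonal decomposition $L^2_{00}(\tilde G/\tilde\Gamma)\cong\bigoplus_{[\xi]}\mathcal H_{[\xi]}$ with $\mathcal H_{[\xi]}\cong L^2(G/\Gamma_\xi)$, the sum running over the $\Gamma$-orbits $[\xi]$ in $\Z^n\setminus\{0\}$ (up to a harmless duality and commensurability bookkeeping) with $\Gamma_\xi=\operatorname{Stab}_\Gamma(\xi)$, and with $G$ acting by left translation. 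Since the summand attached to $\xi=0$ is precisely the pullback of $L^2(G/\Gamma)$, which is removed in passing to $L^2_{00}$, each $\mathcal H_{[\xi]}$ has no nonzero $G$-invariant vector; and since $\beta_{G_t}$ respects the decomposition, $\|\beta_{G_t}\|_{L^2_{00}}=\sup_{[\xi]}\|\beta_{G_t}\|_{\mathcal H_{[\xi]}}$, so it is enough to bound $\|\beta_{G_t}\|_{\mathcal H_{[\xi]}}$ uniformly in $[\xi]$.

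The crux is a uniform estimate on the decay of matrix coefficients of the $\mathcal H_{[\xi]}$. Because $G$ preserves $Q$, the conjugacy type of $H_\xi:=\operatorname{Stab}_G(\xi)$ depends only on the sign of $Q(\xi)$: for $Q(\xi)\neq0$ it is conjugate to $\SO^+(n-q-1,q)$ or $\SO^+(n-q,q-1)$, while for $Q(\xi)=0$ it is $\SO^+(n-q-1,q-1)\ltimes\R^{n-2}$, the semidirect product of the abelian unipotent radical of a maximal parabolic with its semisimple Levi factor; in every case $\Gamma_\xi$ is an arithmetic \emph{congruence} lattice in $H_\xi$. Writing $\mathcal H_{[\xi]}\cong\operatorname{Ind}_{H_\xi}^G(L^2(H_\xi/\Gamma_\xi))$, which splits as $L^2(G/H_\xi)\oplus\operatorname{Ind}_{H_\xi}^G(L^2_0(H_\xi/\Gamma_\xi))$, the ``base'' part $L^2(G/H_\xi)$ is controlled by classical harmonic analysis on the hyperboloids $\{Q=c\}$ and on the light cone $\{Q=0\}$, all of whose $K$-finite matrix coefficients lie in $L^{2k_0+\epsilon}(G)$ for an explicit $k_0=k_0(n-q,q)$; the ``fibre'' part is controlled by a \emph{uniform} spectral gap for $L^2_0(H_\xi/\Gamma_\xi)$, which for congruence subgroups of the finitely many groups $H_\xi$ follows from property $(\tau)$ — equivalently from property $(T)$ for those $H_\xi$ that have it, and from descent to rank-one factors together with Selberg/Kim--Sarnak-type bounds otherwise. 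Inducing the fibre gap up to $G$ and combining it with the base decay through a Howe-type tensor-power argument, one obtains a single exponent $k_0=k_0(n-q,q)$ such that for \emph{every} $\xi\neq0$ the $K$-finite matrix coefficients of $\mathcal H_{[\xi]}$ lie in $L^{2k_0+\epsilon}(G)$ with norm bounded independently of $[\xi]$. The residue-mod-$4$ shape of $\kappa_1$ and the exceptional low-signature values of $\kappa_0$ simply record the cases — $H_\xi$ compact or amenable, or $L^2(G/H_\xi)$ already tempered — in which a sharper exponent is available.

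Finally one feeds this uniform integrability exponent into the quantitative mean ergodic theorem for semisimple groups (Nevo; Gorodnik--Nevo; in the precise form used in \cite{GGN5,GKe,GKY2}): for an admissible family of norm balls $G_t$ and a unitary $G$-representation without invariant vectors whose $K$-finite matrix coefficients lie in $L^{2k_0+\epsilon}(G)$, one has $\|\beta_{G_t}\|\ll_\epsilon v(t)^{-1/(2k_0)+\epsilon}$, with implied constant depending only on $\epsilon$, on $k_0$ and on the family, not on $[\xi]$. Taking the supremum over $[\xi]$ and setting $\kappa_1=1/(2k_0)$ gives $\|\beta_{G_t}f\|_2\ll_\kappa v(t)^{-\kappa}\|f\|_2$ for every $\kappa<\kappa_1$ and every $f\in L^2_{00}(\tilde G/\tilde\Gamma)$; since $v(t)\gg t^{q(n-q-1)}$, unwinding normalizations yields $\kappa_1=\kappa_0/(2q(n-q-1))$ with $\kappa_0$ as tabulated.

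The entire weight of the argument rests on the second paragraph. Three points demand genuine work: (i) the isotropic case $Q(\xi)=0$, where $H_\xi$ is non-reductive, so that identifying the relevant small representation sitting inside $L^2(G/H_\xi)$ and pinning down its sharp $L^p$-integrability requires more than the generic bound — this is where the mod-$4$ dichotomy originates; (ii) the low-signature exceptions $(2,1)$, $(n-1,1)$, $(2,2)$, $(4,2)$, $(3,3)$, $(6,3)$, where the generic estimate is not optimal and each case must be handled individually to extract the better value of $\kappa_0$; and (iii) the uniformity of the spectral gap over the family $\{\Gamma_\xi\}$ of congruence lattices of \emph{unbounded level} inside the groups $H_\xi$ — this is exactly property $(\tau)$, and combining it cleanly with the continuous-spectrum decay on $G/H_\xi$, so that the weaker of the two bounds governs all of $\mathcal H_{[\xi]}$, is what forces the tensor-power trick. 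Granting the second paragraph, the first and third are the standard unfolding-and-interpolation package.
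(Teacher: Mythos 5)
Your overall route is the same one the paper attributes to \cite{GKY2}: Fourier-decompose $L^{2}_{00}(\tilde{G}/\tilde{\Gamma})$ along the toral fibres of $\tilde{G}/\tilde{\Gamma}\to G/\Gamma$ into $G$-invariant pieces $\mathcal{H}_{[\xi]}\cong L^2(G/\Gamma_\xi)\cong \operatorname{Ind}_{H_\xi}^{G}L^2(H_\xi/\Gamma_\xi)$ indexed by $\Gamma$-orbits of nonzero integer vectors, prove a uniform bound on the integrability exponent (the strong spectral gap) of these pieces, and feed it into the Gorodnik--Nevo spectral transfer/quantitative mean ergodic theorem for admissible norm balls, whose volume growth $m_G(G_t)\asymp t^{q(n-q-1)}$ you identify correctly. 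So the architecture matches the paper's description of the proof (``a general spectral transfer principle \dots\ in terms of the strong spectral gap''), and your first and third paragraphs are the standard package.

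The gap is that the theorem's actual content is the explicit exponent $\kappa_1=\kappa_0/(2q(n-q-1))$ with the tabulated values of $\kappa_0$, and your argument never produces it. The second paragraph, which is supposed to supply the uniform $L^{2k_0+\epsilon}$ bound with the correct $k_0$, asserts its existence and then explicitly defers the three inputs that determine its value: the sharp integrability of $L^2(G/H_\xi)$ for isotropic $\xi$ (where $H_\xi$ is non-reductive and where, as you say, the mod-$4$ pattern originates), the exceptional low signatures, and the uniformity of the spectral gap over the family $\{\Gamma_\xi\}$. That is precisely the step the paper singles out as the crux (``determination of the strong spectral gap is crucial for the explicit exponent''), so what you have is a reduction of the theorem to its hardest part, not a proof of it. One concrete inaccuracy on the way: the $H_\xi$ are not ``finitely many groups'' with the $\Gamma_\xi$ congruence subgroups thereof. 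As $\Q$-groups they are the orthogonal groups of $Q$ restricted to $\xi^{\perp}$, whose $\Q$-isomorphism class varies with $Q(\xi)$, so $\{\Gamma_\xi\}$ is an infinite family of congruence arithmetic lattices in infinitely many $\Q$-forms, and for isotropic $\xi$ the ambient group is non-reductive (indeed amenable when $q=1$, which is why that case is sharper rather than harder). The uniform gap you need is property $(\tau)$ in the generality that is uniform over all these forms and levels, combined with a quantified argument that the bound survives induction to $G$ and combination with the $G/H_\xi$ spectrum; invoking property $(T)$ ``for those $H_\xi$ that have it'' and gesturing at Selberg/Kim--Sarnak bounds otherwise leaves exactly the uniformity in $[\xi]$ --- and hence the stated value of $\kappa_1$ --- unestablished.
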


The proof of Theorem \ref{thm:5} uses a general spectral transfer principle, giving explicit bounds on the exponent in the mean ergodic theorem in terms of the strong spectral gap of the corresponding representation. In particular, determination of the strong spectral gap is crucial for the explicit exponent in Theorem \ref{thm:4}. 

\section{Uniformity and Averaging along smaller families}
In this section, we will discuss some results of Ghosh and Kumaraswamy \cite{GK} on inhomogeneous forms inspired by the work of Bourgain \cite{Bou}, who rephrased Margulis's theorem resolving Oppenheim's conjecture as follows: 
there is a sequence $\delta(N)\to 0$ (depending on $Q$) such that for all sufficiently large $N$,
\begin{equation*}
\min_{0 < |x| < N}|Q(x)|\leq \delta(N).
\end{equation*}
One can then ask for upper bounds for $\delta(N)$. This is yet another way of quantifying Oppenheim's conjecture, and one can investigate probabilistic versions of this question as before.\\

Rather than averaging over all quadratic forms, Bourgain treated the more difficult problem of averaging over a smaller family. Let $$
Q(x) = x_1^2 -\alpha_2 x_2^2 -\alpha_3x_3^2,
$$
then for any fixed $\alpha_2 > 0$ and almost all $\alpha_3 \in [1/2,1],$ Bourgain~\cite{Bou} showed that $\delta(N) \ll N^{-2/5+\ve}$. In the same article, assuming the Lindel\"of hypothesis for the Riemann zeta function, Bourgain improved this to $\delta(N) \ll N^{-1+\ve}$. This result is optimal and is a ``uniform in targets" effective version of the Oppenheim conjecture. Ghosh and Kelmer \cite{GKe}, obtained an analogue of Bourgain's theorem for generic ternary (i.e. the average is over all ternary forms and not just diagonal ones) quadratic forms using effective ergodic theorems for semisimple groups. Bourgain's result has been generalised to diagonal ternary forms of higher degree by the work of Schindler \cite{Schindler20}.\\ 

In \cite{GK}, diagonal ternary inhomogeneous forms are considered and the following quantitative theorem is proved.  

\begin{theorem}\label{thmshift}
Let $k \geq 2$ be an integer and let $Q(x) = x_1^k -\alpha_2x_2^k - \alpha_3 x_3^k$ and $\ve > 0$. Let $\bs{\theta} = (\theta_1,\theta_2,\theta_3) \in \R^3$ be a fixed vector. Then for any fixed $\alpha_2 >0$ and almost every $\alpha_3 \in [1/2,1]$ the following statements hold:
\begin{enumerate}[label=({{\roman*}})]
\item Assume the exponent pair conjecture. Then we have
$$
\min_{\substack{x \in \Z^3 \\ |x| \sim N}}|Q(x+\bs{\theta)}| \ll_{\ve,\alpha_2,\alpha_3,\bs{\theta}} N^{k-3+\ve}.
$$
\item Unconditionally, we have
$$
\min_{\substack{x \in \Z^3 \\ |x| \sim N}}|Q(x+\bs{\theta)}| \ll_{\ve,\alpha_2,\alpha_3,\bs{\theta}} N^{k-12/5+\ve}.
$$
\end{enumerate}
\end{theorem}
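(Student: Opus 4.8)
The plan is to run the Hardy--Littlewood circle method together with a second moment computation in the free parameter $\alpha_3$, in the spirit of Bourgain's treatment of the homogeneous diagonal quadratic case and Schindler's higher-degree extension, the inhomogeneity producing \emph{shifted} Weyl sums $\sum_{n\sim N}e(\alpha t(n+\theta)^k)$ (here $e(y):=e^{2\pi i y}$) throughout. First I would fix $\ve>0$ and set $\delta=\delta_N:=N^{k-3+\ve}$ in case (i) and $\delta=\delta_N:=N^{k-12/5+\ve}$ in case (ii). By the Borel--Cantelli lemma applied along the lacunary sequence $N=2^{j}$ (a routine passage then gives all $N$, since the shells $|x|\sim N$ and $|x|\sim 2^{j}$ overlap when $N\asymp 2^{j}$), it suffices to show that the exceptional set $E_N:=\{\alpha_3\in[1/2,1]:\min_{|x|\sim N}|Q(x+\bs\theta)|>\delta_N\}$ has measure $\ll N^{-\eta}$ for some fixed $\eta=\eta(\ve)>0$. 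To detect solutions I would introduce the smooth count
\[
\mathcal N(\alpha_3):=\sum_{x\in\Z^{3}}w_1\!\Big(\tfrac{x_1}{N}\Big)w_2\!\Big(\tfrac{x_2}{N}\Big)w_3\!\Big(\tfrac{x_3}{N}\Big)\,\chi_\delta\big(Q(x+\bs\theta)\big),
\]
where the $w_i\ge 0$ are fixed bumps supported on $\{|u|\sim 1\}$ and $\chi=\phi*\phi$ for a fixed even nonnegative bump $\phi$ supported on $[-\tfrac12,\tfrac12]$, so $\chi_\delta=\chi(\cdot/\delta)$ is nonnegative, supported on $[-\delta,\delta]$, with $\chi_\delta\gg\mathbf 1_{[-\delta/2,\delta/2]}$ and $\widehat{\chi_\delta}(t)=\delta\,|\widehat\phi(\delta t)|^{2}\ge 0$ Schwartz; then $\mathcal N(\alpha_3)>0$ forces an integer point with all coordinates $\asymp N$ and $|Q(x+\bs\theta)|\le\delta$, so $E_N\subseteq\{\alpha_3:\mathcal N(\alpha_3)=0\}$. (Restricting to $x$ with all three coordinates $\asymp N$ costs nothing: the indefinite diagonal form vanishes, to within $O(\delta)$, on a genuinely $2$-dimensional part of that region.)

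\textbf{Main term.} Fourier inversion in $\chi_\delta$ followed by Poisson summation in the spatial variables gives
\[
\mathcal N(\alpha_3)=\int_{\R}\widehat{\chi_\delta}(t)\,T_1(t)\,T_2(t)\,T_3(t,\alpha_3)\,\ud t,\qquad
T_i(t)=\sum_{n}w_i\!\Big(\tfrac nN\Big)e\big(\epsilon_i\alpha_i\,t\,(n+\theta_i)^{k}\big),
\]
with $\alpha_1=1$, $\epsilon_1=1$, $\epsilon_2=\epsilon_3=-1$. Because the $w_i$ are supported away from the origin there are no stationary points for $N^{-k}\ll|t|\ll N^{1-k}$, so each $T_i(t)$ is negligible there, while $T_i(t)\asymp N$ for $|t|\lesssim N^{-k}$ and the range $|t|\gg\delta^{-1}$ is killed by the decay of $\widehat{\chi_\delta}$. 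Isolating the contribution $\mathcal M(\alpha_3)$ of a bounded multiple of $|t|\le N^{-k}$ and comparing it with the singular integral $\int\!\!\int\!\!\int w_1w_2w_3\,\chi_\delta\!\big(Q(u+\bs\theta)\big)\,\ud u$ gives $\mathcal M(\alpha_3)\asymp\delta N^{3-k}$ uniformly for $\alpha_3\in[1/2,1]$, using indefiniteness of $Q$ and $\nabla Q\neq 0$ on the relevant region (and $\alpha_2$ in its fixed compact range). Thus $\{\alpha_3:\mathcal N(\alpha_3)=0\}\subseteq\{\alpha_3:|\mathcal E(\alpha_3)|\ge c\,\delta N^{3-k}\}$ with $\mathcal E(\alpha_3):=\mathcal N(\alpha_3)-\mathcal M(\alpha_3)$ the contribution of $|t|\gtrsim N^{-k}$, and by Chebyshev it is enough to prove
\[
\int_{1/2}^{1}|\mathcal E(\alpha_3)|^{2}\,\ud\alpha_3\ \ll\ \big(\delta N^{3-k}\big)^{2}N^{-\eta}.
\]

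\textbf{The second moment.} Writing $|\mathcal E|^{2}=\mathcal E\overline{\mathcal E}$ as a double integral in $(t,t')$ and performing the $\alpha_3$-integration first produces a kernel
\[
K_3(t,t'):=\int_{1/2}^{1}T_3(t,\alpha_3)\overline{T_3(t',\alpha_3)}\,\ud\alpha_3=\sum_{m,m'}w_3\!\Big(\tfrac mN\Big)w_3\!\Big(\tfrac{m'}N\Big)\,\psi\big(t(m+\theta_3)^{k}-t'(m'+\theta_3)^{k}\big),
\]
with $|\psi(\gamma)|\ll\min(1,|\gamma|^{-1})$. I would bound $K_3$ by separating the diagonal $m=m'$, which contributes $\ll N\min(1,|t-t'|^{-1}N^{-k})$, from the off-diagonal part, where the elementary lower bound $|(m+\theta_3)^{k}-(m'+\theta_3)^{k}|\gg N^{k-1}|m-m'|$ for $|m|,|m'|\asymp N$ forces $t(m+\theta_3)^{k}-t'(m'+\theta_3)^{k}$ to be large unless $|t-t'|$ is small; in particular $\int_{1/2}^{1}|T_3(t,\alpha_3)|^{2}\,\ud\alpha_3\ll N$ for $|t|\gg N^{1-k}$. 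It then remains to insert bounds for $T_1(t)T_2(t)$ and $T_1(t')T_2(t')$, for which no averaging is available: here I would carry out a Farey dissection of the range $N^{1-k}\ll|t|\ll\delta^{-1}$, treat $t$ near rationals of small denominator (the \emph{secondary major arcs}, whose contribution rebuilds lower-order main terms and, in the $T_3$ slot, is further damped by the $\alpha_3$-average) as a separate case, and on the complementary ranges invoke Weyl's inequality together with the van der Corput / exponent pair estimates for the shifted Weyl sums $\sum_{n\sim N}e(\alpha t(n+\theta)^{k})$. Pushing this book-keeping through yields the displayed bound, hence $|E_N|\ll N^{-\eta}$, and Borel--Cantelli concludes.

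\textbf{The hard part.} The crux — and the point where the two exponents diverge — is the quality of the available bound for $T_1(t)T_2(t)$ on the genuine minor arcs $|t|\asymp\delta^{-1}$, since this dictates both the threshold for $\delta_N$ and the saving $\eta$. Granting the \emph{exponent pair conjecture} one obtains essentially square-root cancellation there, $|T_i(t)|\ll_\ve(|t|N^{k})^{\ve}N^{1/2+\ve}$, which drives the second moment below $\mathcal M(\alpha_3)^{2}$ and produces the optimal $\delta_N\asymp N^{k-3+\ve}$; for $k=2$ this is exactly Bourgain's conditional exponent $N^{-1+\ve}$, the exponent pair conjecture here playing precisely the role of the Lindel\"of hypothesis in his argument. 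Unconditionally one has only the cancellation afforded by a fixed admissible exponent pair, i.e.\ the classical van der Corput estimates, and the attendant loss is exactly what weakens the exponent to $N^{k-12/5+\ve}$, recovering Bourgain's unconditional $N^{-2/5+\ve}$ when $k=2$. Two further technical matters, routine but to be handled with care, are the treatment of the secondary major arcs together with their interaction with the $\alpha_3$-average (which is what rescues the method for \emph{almost} every, rather than every, $\alpha_3$) and uniformity in the fixed shift $\bs\theta$ and in $\alpha_2$: the shift enters only through lower-order terms of the phases $\alpha_i t(u+\theta_i)^{k}$ and through the arithmetic of $(n+\theta_i)^{k}$ in the off-diagonal estimate, and is absorbed by tracking these through the stationary-phase and van der Corput steps, the crucial derivative lower bounds being insensitive to it.
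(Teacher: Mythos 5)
A preliminary remark: this survey does not actually prove Theorem \ref{thmshift}; it is quoted from \cite{GK}, so your attempt can only be compared with the original argument there. That argument is indeed of the Bourgain type you invoke (a second moment over $\alpha_3$ of a Fourier-analytic counting function built from shifted exponential sums, with van der Corput/exponent-pair estimates supplying the cancellation, the exponent pair conjecture playing the role of Lindel\"of), and the concrete carrier of the analytic work is a four-variable counting bound of exactly the shape recorded as Theorem \ref{prop1} in this survey. So your outline is pointed in the right direction and is structurally the same kind of proof.

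Nevertheless, as written the proposal has a genuine gap: every step that actually determines the exponents $k-3+\ve$ and $k-12/5+\ve$ is asserted rather than proved. First, the claimed uniform minor-arc bound $|T_i(t)|\ll(|t|N^k)^{\ve}N^{1/2+\ve}$ is false on the whole range $N^{1-k}\ll|t|\ll\delta^{-1}$: for example with $k=2$, $\theta_1=0$ and $t$ a nonzero integer in that range one has $T_1(t)\asymp N$, and more generally the shifted sums have spikes of size about $Nq^{-1/2}$ near rationals with small denominator $q$. You acknowledge these ``secondary major arcs'' but defer them as routine; in fact their interplay with the $\alpha_3$-average (the kernel $K_3$) is the heart of the matter --- it is what makes the statement true for almost every rather than every $\alpha_3$ --- and the margin there is razor-thin: with $\delta=N^{k-3+\ve}$ the expected count is only $N^{\ve}$, so the second moment must beat $N^{2\ve}$ by a genuine power of $N$, leaving no room for any unquantified loss in this regime. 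Second, the unconditional exponent $12/5$ never emerges from your argument: no specific exponent pair is chosen, no optimization is carried out, and ``recovering Bourgain's $N^{-2/5+\ve}$'' is assumed rather than derived; likewise the uniformity in the fixed shift $\bs{\theta}$ (the new feature compared with Bourgain and Schindler, since $(m+\theta)^k$ is no longer integral and the divisor-type arithmetic is unavailable) is waved through. In short, the skeleton matches the known proof, but the decisive estimates --- in \cite{GK} packaged precisely as the counting statement of Theorem \ref{prop1} and its exponent-pair proof for the shifted phases --- are missing, so the proposal does not yet constitute a proof.
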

We refer to \cite{GK} for the statement of the exponent pair conjecture and further references, only mentioning that it implies the truth of the Lindel\"of hypothesis for the Riemann zeta function $\zeta(s)$.\\

In \cite{GK}, sharp upper bounds are also proved for the following Diophantine inequality in four variables, which is in the tradition of the problem of simultaneous inhomogeneous Diophantine approximation on curves. 
\begin{theorem}\label{prop1}
Let $\theta_1,\theta_2,\alpha, \beta$ be fixed real numbers. Let $0 < \delta < 1$. Define
$\mathcal{N}(M,\alpha,\delta)$ to be the number of solutions $(m_1,m_2,m_3,m_4) \in \Z^4 \cap [M,2M]^4$ to the inequality 
\begin{equation}\label{eq:nmaddef}
|(m_1+\theta_1)^\alpha -(m_2+\theta_1)^\alpha + \beta(m_3+\theta_2)^\alpha -  \beta(m_4+\theta_2)^\alpha| \leq \delta M^{\alpha}.
\end{equation}
Suppose that $\alpha \neq 0,1$. Then for any $\ve > 0$ we have $$\mathcal{N}(M,\alpha,\delta) \ll_{\alpha,\beta,\ve,\theta_1,\theta_2} M^{2+\ve}+\delta M^{4+\ve}.$$
\end{theorem}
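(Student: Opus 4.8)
The plan is to reduce the theorem to a four–variable ``second moment'' estimate for a single power function and then to attack that estimate by van der Corput's method. Throughout put $I=[M,2M]\cap\Z$ and $\Delta=\delta M^{\alpha}$. First I would cover $\R$ by intervals of length $2\Delta$ and sort the pairs $(m_1,m_2)$ by the interval containing $(m_1+\theta_1)^{\alpha}-(m_2+\theta_1)^{\alpha}$, and likewise the pairs $(m_3,m_4)$ by the interval containing $\beta[(m_3+\theta_2)^{\alpha}-(m_4+\theta_2)^{\alpha}]$. Any solution of \eqref{eq:nmaddef} has these two labels within $O(1)$ of one another, so Cauchy--Schwarz bounds $\mathcal N(M,\alpha,\delta)$ by a geometric mean of two sums $\sum_{\ell}R_i(\ell)^{2}$, and each such sum is itself (after enlarging $\Delta$ by a bounded factor) a count of the form
\[
\mathcal M(\theta,\Delta):=\#\{(n_1,n_2,n_3,n_4)\in I^{4}:\ |(n_1+\theta)^{\alpha}-(n_2+\theta)^{\alpha}+(n_3+\theta)^{\alpha}-(n_4+\theta)^{\alpha}|\le\Delta\}.
\]
Thus it suffices to prove $\mathcal M(\theta,\Delta)\ll_{\alpha,\theta,\ve}M^{2+\ve}+\delta M^{4+\ve}$ uniformly in $\theta\in\R$; applying this with $(\theta_1,2\Delta)$ and with $(\theta_2,2\Delta/|\beta|)$ then recovers the theorem.

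For $\mathcal M(\theta,\Delta)$, write $g(x)=(x+\theta)^{\alpha}$ and separate the diagonal. Solutions with $\{n_1,n_3\}=\{n_2,n_4\}$ as multisets satisfy the inequality identically and number $\asymp M^{2}$; these account for the $M^{2+\ve}$ term. For the rest one has $n_1\ne n_2$ and $n_3\ne n_4$, so set $h=n_1-n_2$, $k=n_3-n_4$ (both nonzero) and rewrite the off–diagonal part as $\sum_{h,k\ne0}\#\{(n_2,n_4)\in I_h\times I_k:|G_h(n_2)+G_k(n_4)|\le\Delta\}$, where $G_h(x)=g(x+h)-g(x)$ satisfies $G_h^{(j)}(x)\asymp M^{\alpha-1-j}|h|$ on $I$, with constant sign for $j=1$ (this is where $\alpha\ne0,1$ enters). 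For fixed $h,k$ this inner quantity counts integer points in a $\Delta$–neighbourhood of the family of curves $G_h(x)+G_k(y)=c$, $|c|\le\Delta$; the family can be nonempty only when the ranges of $G_h$ and $-G_k$ meet, which forces $|h|\asymp|k|$, and the area of the neighbourhood is $\asymp \Delta M/(M^{\alpha-2}\max(|h|,|k|))=\delta M^{3}/\max(|h|,|k|)$. Summing this expected main term over the admissible pairs gives $\sum_{d}\#\{(h,k):\max(|h|,|k|)\asymp d\}\cdot\delta M^{3}/d\ll\delta M^{4+\ve}$, producing the second term.

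The hard part is to control the deviation of each inner count from its area, i.e.\ the contribution of the \emph{degenerate} or \emph{arithmetically resonant} pairs $(h,k)$ for which the curve $G_h(x)+G_k(y)=c$ is abnormally flat or passes anomalously close to many lattice points (equivalently, pairs for which the exponential sums $T_h(u)=\sum_{n}e(uG_h(n)/\Delta)$ are unexpectedly large). For $\alpha\ne2$ one has $G_h''\asymp M^{\alpha-3}|h|\ne0$, so the curves have curvature bounded below, and I would invoke the classical van der Corput estimates for integer points near a curve of prescribed length and curvature — i.e.\ the second–derivative test applied to $T_h(u)$, iterated via the $A$– and $B$–processes in the part of the range where the direct bound is too weak — to get the required uniform control, after which the summation over $(h,k)$ stays within $M^{2+\ve}+\delta M^{4+\ve}$. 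The case $\alpha=2$ is genuinely different: then $G_h$ is affine and the curves are line segments, so instead of curvature one uses an exact lattice–point count on $G_h(x)+G_k(y)=c$, which introduces $\gcd(h,k)$ and is absorbed by the elementary bound $\sum_{h,k\le M}\gcd(h,k)\ll M^{2}\log M$. Keeping track of the ranges and error terms uniformly through this case analysis — in particular ruling out any over–contribution from near–degenerate $(h,k)$ — is the technical heart of the proof.
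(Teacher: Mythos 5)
You should first know that this survey does not prove Theorem \ref{prop1}: it is quoted from \cite{GK}, where the proof is an adaptation (allowing the shifts $\theta_1,\theta_2$ and the coefficient $\beta$) of Robert--Sargos's proof of the corresponding homogeneous estimate, so the comparison below is with that argument. Your opening reduction is fine: sorting the pairs by the values of $(m_1+\theta_1)^\alpha-(m_2+\theta_1)^\alpha$ and of $\beta[(m_3+\theta_2)^\alpha-(m_4+\theta_2)^\alpha]$ and applying Cauchy--Schwarz legitimately reduces the theorem (for $\beta\neq 0$, which must anyway be assumed, since for $\beta=0$ and $\delta<M^{-1}$ the claimed bound is false) to the single-shift count $\mathcal M(\theta,\Delta)\ll M^{2+\ve}+\delta M^{4+\ve}$, and the diagonal extraction and the main-term bookkeeping $\sum_d d^2\cdot\delta M^3/d\ll\delta M^{4+\ve}$ are the right heuristics.

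The genuine gap is that the off-diagonal estimate --- the entire content of the theorem --- is not proved but deferred to ``classical van der Corput estimates'' and the $A$/$B$ processes, and that route fails for two concrete reasons. First, your curvature claim is wrong: since $h$ and $k$ must have opposite signs, the second fundamental form of the curve $G_h(x)+G_k(y)=c$ involves the combination $G_h''(x)+G_k''(y)F'(x)^2$, which nearly cancels when $h+k$ is small; as $h+k\to 0$ the curves degenerate to straight lines of slope close to $1$ (for $k=-h$, $c=0$ the level set contains the line $y=x+h$ exactly), so no lower bound on curvature in terms of $|h|$ alone holds, and the near-diagonal range $0<|h+k|\ll|h|$, where the curves hug lines of nearly rational slope, is precisely the dangerous one. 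Second, and more fundamentally, even where the curvature has its generic size $\asymp|h+k|M^{\alpha-3}$, any pair-by-pair bound for integer points on or near such a curve (second-derivative test, Swinnerton-Dyer/Huxley--Trifonov, Jarn\'ik-type) is of worst-case size at least $M\lambda_2^{1/3}+O(1)$ per pair, and summing such worst-case bounds over the $\asymp M^2$ admissible pairs $(h,k)$ overshoots $M^{2+\ve}$ badly (already the $O(1)$ floor saturates $M^2$, and the $M\lambda_2^{1/3}$ term alone gives $M^{2+(\alpha+1)/3}$-type totals). The truth of the theorem rests on the fact that \emph{most} pairs $(h,k)$ contribute no solutions at all, which is an averaging phenomenon over $(h,k)$ that no curvature bound applied to one curve at a time can detect; equivalently, the statement amounts to a sharp fourth-moment bound for $\sum_{n\sim M}e\bigl(t(n+\theta)^\alpha\bigr)$, which pointwise van der Corput bounds are known not to yield. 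This is exactly the point where Robert--Sargos (and, with shifts, \cite{GK}) run a different, more arithmetic argument exploiting the joint constraints on $(h,k)$ rather than per-curve lattice-point estimates; without an argument of that kind your outline does not close.
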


\section{Values of quadratic forms at \texorpdfstring{$S$}{S} integers}
Let $S$ be a finite set of places of $\Q$ including the infinite place. Set $\Q_S := \prod_{s \in S}\Q_s$, and let $\Z_S$ denote the ring of $S$-integers, these are those $x \in \Q$ for which
$|x|_s \leq 1 \text{ for all } s \notin S$. In \cite{BP}, Borel and Prasad established an $S$-integer version of the Oppenheim conjecture. That is, if $Q$ is a quadratic form on $\Q_{S}^{n}$ in at least $3$ variables that is assumed to be isotropic, nondegenerate, and irrational, then $Q(\Z_{S}^{n})$ is dense in $\Q_S$. This is a slightly misleading version of events as we explain now. Firstly, Borel and Prasad proved their theorem for number fields, secondly, they were not able to prove density but only density around $0$. The reason was that at the time of their writing, the $S$-adic Raghunathan conjectures were not yet known. At present, the $S$-adic Oppenheim conjecture is known, in its density avatar, for all number fields.\\

In \cite{HLM, Han}, quantitative versions of the $S$-adic Oppenheim conjecture were established, while in \cite{GH}, an effective Oppenheim conjecture for generic inhomogeneous forms was established using a suitable analogue of Rogers's formula. These constitute $S$-adic analogues of the results of Ghosh, Kelmer and Yu \cite{GKY1, GKY2} and the proof proceeds by first establishing suitable analogues of Rogers's formulae for congruence quotients and affine lattices in the $S$-integer setting. The latter are likely to be of independent interest, see \cite{Han2} for an application to Diophantine approximation reminiscent of \cite{AGY}.

\section{Binary inhomogeneous forms}
In this section, motivated by the work of Kleinbock and Weiss (\cite{KW}) on values of binary quadratic forms, we will study the values of inhomogeneous binary quadratic forms at integer points.

It is very well known that the Oppenheim conjecture fails for binary indefinite quadratic forms. For example, consider
\[Q(x,y)=x^2-\eta^2 y^2=y^2\left(\frac{x}{y}-\eta\right)\left(\frac{x}{y}+\eta\right)\quad \text{for}\,\, x,y \in \mathbb R,\]
 where $\eta \in \mathbb R$ is badly approximable, i.e.,
 \[\inf_{x \in \mathbb Z,y \in \mathbb N}y^2\left|\frac{x}{y}-\eta\right|>0.\]
Then for all non-zero integer tuples $(x,y)$, $|Q(x,y)|>\delta$ for some $\delta >0$ and hence Oppenheim conjecture fails for the $Q$ as above. We also know that the set of all badly approximable numbers of $\mathbb R$, although null, is thick (i.e., its intersection with any nonempty open set has full Hausdorff dimension) in $\mathbb R$. From this, we get a large class of binary indefinite quadratic forms for which Oppenheim conjecture fails. In fact, the set of binary indefinite quadratic forms for which the Oppenheim conjecture fails is thick. Before going further, we set up some notation for rest of the paper. We set
 
\[G:=\SL(2,\mathbb R), \quad \Gamma:=\SL(2,\mathbb Z), \quad G_a:=\SL(2,\mathbb R) \ltimes \mathbb R^2, \quad \Gamma_a:=\SL(2,\mathbb Z) \ltimes \mathbb Z^2, \]
\[\mathcal L :=\text{the space of unimodular lattices in } \mathbb R^2 \cong \SL(2,\mathbb R)/\SL(2,\mathbb Z),\]
\[ \mathcal{L}_a:= \text{the space of affine unimodular lattices in } \mathbb R^2 \cong \SL(2,\mathbb R) \ltimes \mathbb R^2 /  \SL(2,\mathbb Z) \ltimes \mathbb Z^2,\]
\[\SL(2,\mathbb R) \ltimes \mathbb R^2 \curvearrowright \mathbb R^2 \,\, \text{by}\]
\[(g,v)\cdot y:=gy+v\,\,\,\,\text{for}\,\, (g,v) \in \SL(2,\mathbb R) \ltimes \mathbb R^2 \,\, \text{and} \,\, y \in \mathbb R^2.\]
\\
 
Let us fix the quadratic form $Q_0(y_1,y_2)=y_1y_2.$ Then given any binary indefinite quadratic form $Q$ there exists $\lambda \in \mathbb R \setminus \{0\}$ and $g \in \SL(n,\mathbb R)$ such that $Q(y)=\lambda Q_0(gy)$ for all $y \in \mathbb R^2.$ Now let $Q_{\xi}=(Q,\xi)$ be a inhomogeneous binary quadratic form. Then for any $y \in \mathbb R^2,$
\[Q_{\xi}(y)=Q(y+\xi)=\lambda Q_0(g(y+\xi))=\lambda Q_0(gy+g\xi)=\lambda Q_0((g,g\xi)\cdot y),\]
 where $(g,g\xi) \in \SL_2(\mathbb R) \ltimes \mathbb R^2.$ Hence given any inhomogeneous binary indefinite quadratic form $Q_{\xi},$ there exists $(g,v) \in \SL_2(\mathbb R) \ltimes \mathbb R^2$ and $\lambda \in \mathbb R \setminus \{0\}$  such that $Q_{\xi}(y)=\lambda Q_0((g,v)\cdot y)$ for all $y \in \mathbb R^2.$

\begin{definition}
    Let $ Q_{\xi}$ and $Q'_{\xi'}$ be two inhomogeneous binary quadratic forms. We say that $Q_{\xi}$ is equivalent to $Q'_{\xi'}$ (denoted by  $Q_{\xi} \sim Q'_{\xi'}$) if and only if there exists $(g,v) \in \SL_2(\mathbb R) \ltimes \mathbb R^2$ and $\lambda \in \mathbb R \setminus \{0\}$ such that  
    \begin{equation*}
   Q_{\xi}(y)=\lambda Q'_{\xi'}((g,v)\cdot y) 
    \end{equation*}
    for all $y \in \mathbb R^2.$
\end{definition}  
Given any inhomogeneous  binary indefinite quadratic form $Q_{\xi},$ one can easily see that $Q_{\xi} \sim Q_0$, where $Q_0(y_1,y_2)=y_1y_2$ for all $(y_1,y_2) \in \mathbb R^2.$ 
\begin{definition}
    Given an inhomogeneous binary quadratic form $Q_{\xi}$, we define the stabilizer of $Q_{\xi}$ as
    \begin{equation*}
        \SO(Q_{\xi}):=\{(g,v) \in G_a: Q_{\xi}((g,v)\cdot y)=Q_{\xi}(y) \,\, \text{for all} \,\, y \in \mathbb R^2\}.
    \end{equation*}
\end{definition}
We define
\begin{equation*}
    F_0:=\left\{b_{t}:t \in \mathbb R \right\},\,\, \text{where}\,\, b_t=\begin{bmatrix}
                 e^t& 0\\0& e^{-t}
\end{bmatrix}
\end{equation*} 
and 
\begin{equation*}
    F:= F_0 \ltimes \{0\}.
\end{equation*}
Let $H^+:=(H_0^+,0)$ and $H^-:=(H_0^-,0)$, where $H_0^+$ and $H_0^-$ are the expanding and contracting horospherical subgroups with respect to $b_1$ in $SL(2,\mathbb R),$ i.e.,
\begin{equation*}
    H_0^+:=\left\{h^+_t=\begin{bmatrix}
        1&t\\0&1 
    \end{bmatrix}:t\in \mathbb R\right\} 
\end{equation*}
and
\begin{equation*}
    H_0^-:=\left\{h^-_t=\begin{bmatrix}
        1&0\\t&0 
    \end{bmatrix}:t\in \mathbb R\right\}.
\end{equation*}

For any subgroup $K$ of a group $K'$, we denote the connected component of $K$ containing identity as $K^{\circ}.$
\begin{lemma}Suppose $Q_{\xi}=(Q,\xi)$ be an inhomogeneous binary indefinite quadratic form such that $Q(y)=\lambda Q_0(gy)$ for some $g \in \SL(2,\mathbb R)$ and $\lambda \in \mathbb R \setminus \{0\}.$ Then we have the following:
    \begin{enumerate}[label=(\roman*)]
        \item $\SO(Q_0)=\SO(Q_0)^{\circ}=F_0,$ 
        \item $\SO(Q)=\SO(Q)^{\circ}=g^{-1}F_0 g,$ and
        \item $\SO(Q_{\xi})=\SO(Q_{\xi})^{\circ}=(g,-\xi)^{-1}F(g,-\xi).$
    \end{enumerate}
\end{lemma}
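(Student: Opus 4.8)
The plan is to compute each stabilizer explicitly, starting from the simplest case and using the equivalence structure to propagate the result.

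\textbf{Step 1: The homogeneous model $Q_0$.} First I would compute $\SO(Q_0)$ directly. An element $h = \begin{bmatrix} a & b \\ c & d \end{bmatrix} \in \SL(2,\mathbb{R})$ satisfies $Q_0(hy) = Q_0(y)$ for all $y = (y_1,y_2)$ iff $(ay_1+by_2)(cy_1+dy_2) = y_1 y_2$ identically, i.e. $ac = 0$, $bd = 0$, $ad + bc = 1$. Together with $ad - bc = 1$ this forces $bc = 0$, hence either $b = c = 0$ (giving the diagonal torus $F_0$) or $a = d = 0$ (giving antidiagonal matrices, but then $ad - bc = -bc = 1$, so $bc = -1$, which are in $\GL_2$ but have determinant $-bc\cdot(\text{sign})$... actually $\det = ad - bc = -bc = 1$ is consistent, so these are in $\SL_2$). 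A short check shows the antidiagonal solutions satisfy $ad + bc = bc = -1 \ne 1$, so they are excluded; hence $\SO(Q_0) = F_0$ exactly. Since $F_0 = \{b_t : t \in \mathbb{R}\}$ is already connected (it is the image of the continuous map $t \mapsto b_t$ from $\mathbb{R}$), we get $\SO(Q_0) = \SO(Q_0)^{\circ} = F_0$, proving (i). Note these are elements of $G_a$ of the form $(h,0)$; one also checks no nonzero translation part can occur since $Q_0((h,v)\cdot y) = Q_0(hy + v)$ must equal $Q_0(y)$, and matching the terms linear in $y$ forces $v$ into the kernel of an invertible form, hence $v = 0$.

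\textbf{Step 2: Transport to $Q$ and $Q_{\xi}$.} For (ii), since $Q(y) = \lambda Q_0(gy)$, an element $h \in \SL(2,\mathbb{R})$ stabilizes $Q$ iff $Q_0(ghy) = Q_0(gy)$ for all $y$, i.e. iff $g h g^{-1}$ stabilizes $Q_0$ (substituting $y \mapsto g^{-1}y$), i.e. iff $ghg^{-1} \in F_0$, i.e. $h \in g^{-1}F_0 g$. Connectedness is preserved by conjugation, so $\SO(Q)^{\circ} = g^{-1}F_0 g = \SO(Q)$, giving (ii). For (iii), I would use the relation established in the text: $Q_{\xi}(y) = \lambda Q_0((g, g\xi)\cdot y)$. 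Write $(g, g\xi) = (g,-\xi)\cdot(\text{something})$; more precisely the natural conjugation in $G_a = G \ltimes \mathbb{R}^2$ sends $(h,0)$ to $(g,g\xi)^{-1}(h,0)(g,g\xi)$, and one checks $(g,g\xi) = (e, g\xi)(g,0)$ while $(g,-\xi)^{-1} = (g^{-1}, \xi)$... I would instead directly verify: $(g,v) \in \SO(Q_{\xi})$ iff $Q_0\big((g,g\xi)(g,v)\cdot y\big) = Q_0\big((g,g\xi)\cdot y\big)$ for all $y$, iff $(g,g\xi)(g,v)(g,g\xi)^{-1} \in \SO(Q_0) = F$ (the copy $F_0 \ltimes \{0\}$), after the substitution $y \mapsto (g,g\xi)^{-1}y$. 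Solving $(g,g\xi)(g,v)(g,g\xi)^{-1} \in F$ gives $(g,v) \in (g,g\xi)^{-1} F (g,g\xi)$. It then remains to reconcile the conjugating element $(g,g\xi)$ with the stated $(g,-\xi)$: using the semidirect product law $(g_1,v_1)(g_2,v_2) = (g_1 g_2, g_1 v_2 + v_1)$, a direct computation shows $(g,-\xi)^{-1} F (g,-\xi) = (g,g\xi)^{-1} F (g,g\xi)$ because both equal $\{(g^{-1}b_t g,\, (g^{-1}b_t g - \mathrm{id})\xi) : t \in \mathbb{R}\}$ — I would carry out this identification carefully. Connectedness again follows since conjugation is a homeomorphism and $F$ is connected.

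\textbf{Main obstacle.} The routine linear algebra in Step 1 and the conjugation bookkeeping in Step 3 are the bulk of the work, but the genuinely delicate point is the matching of the two conjugating affine elements in (iii): one must be careful about which copy of the translation lives where, since $Q_{\xi}(y) = Q(y+\xi)$ uses $+\xi$ whereas the conjugation naturally produces $g\xi$ and the final answer is phrased with $-\xi$. I would resolve this by a single explicit computation of $(g,-\xi)^{-1} b_t^{\vee} (g,-\xi)$ where $b_t^\vee = (b_t, 0) \in F$, using $(g,-\xi)^{-1} = (g^{-1}, g^{-1}\xi)$, obtaining $(g^{-1}b_t g, \, g^{-1}b_t\xi - g^{-1}\xi\cdot(\ldots))$ — and comparing with the condition derived from $Q_{\xi}((g,v)\cdot y) = Q_{\xi}(y)$ unwound via $Q_0$. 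Verifying these coincide, together with checking the translation part cannot be larger (which again follows from nondegeneracy of $Q_0$, forcing the linear-in-$y$ part of $Q_0((g,v)\cdot y) - Q_0((g,v')\cdot y)$ to vanish only when $v = v'$), completes the proof.
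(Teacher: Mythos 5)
The paper offers no argument for this lemma (it simply cites \cite[Lemma 3.1]{BG}), so your computation has to stand on its own; the route you take --- compute the stabilizer of $Q_0$ and transport it through $Q_\xi=\lambda Q_0\circ(g,g\xi)$ --- is the natural one, but two steps do not survive scrutiny. First, in Step 1 your equations $ac=0$, $bd=0$, $ad+bc=1$, combined with $ad-bc=1$, force $ad=1$ and $b=c=0$, i.e.\ the \emph{full} diagonal torus $\{\diag(a,a^{-1}):a\neq 0\}$, not $F_0$: every $-b_t$, in particular $-I_2$, preserves $y_1y_2$ and lies in $\SL(2,\mathbb R)$. So the stabilizer your equations describe has two connected components, and the equalities in (i)--(iii) hold only after passing to identity components (which is how such statements are used in this literature); your claim that the computation yields \emph{exactly} $F_0$ and that connectedness is automatic is not what the equations give, and the same $\pm$ ambiguity propagates to (ii) and (iii).

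Second, and more seriously, the step you yourself single out as the delicate point of (iii) is asserted rather than proved, and it is false. Your conjugation criterion is correct: $(h,v)\in\SO(Q_\xi)$ iff $(g,g\xi)(h,v)(g,g\xi)^{-1}$ stabilizes $Q_0$, and since $(g,g\xi)^{-1}=(g^{-1},-\xi)$ one gets $(g,g\xi)^{-1}(b_t,0)(g,g\xi)=\bigl(g^{-1}b_tg,\ (g^{-1}b_tg-I_2)\xi\bigr)$. But the claimed identity $(g,-\xi)^{-1}F(g,-\xi)=(g,g\xi)^{-1}F(g,g\xi)$, with both equal to $\{(g^{-1}b_tg,(g^{-1}b_tg-I_2)\xi):t\in\mathbb R\}$, does not hold: since $(g,-\xi)^{-1}=(g^{-1},g^{-1}\xi)$, one finds $(g,-\xi)^{-1}(b_t,0)(g,-\xi)=\bigl(g^{-1}b_tg,\ g^{-1}(I_2-b_t)\xi\bigr)$, whose translation part differs from $(g^{-1}b_tg-I_2)\xi$ in general. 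Already for $g=I_2$, $\xi=(1,0)$, $t\neq 0$ the two candidates are $(b_t,(1-e^t,0))$ and $(b_t,(e^t-1,0))$, and only the latter fixes $Q_0(\,\cdot\,+\xi)$. What your computation actually establishes is $\SO(Q_\xi)^{\circ}=(g,g\xi)^{-1}F(g,g\xi)=(g^{-1},-\xi)\,F\,(g^{-1},-\xi)^{-1}$, which coincides with the stated $(g,-\xi)^{-1}F(g,-\xi)$ only under special relations between $g$ and $\xi$ (e.g.\ $g^{-1}\xi=-\xi$). So either the stated conjugator reflects a misprint or a different convention that you would have to identify and prove equivalent, or the conclusion must be reformulated as above; in any case the coincidence you propose to ``carry out carefully'' cannot be carried out, because it is not true, and your proof of (iii) as literally stated does not close.
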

\noindent The proof of this lemma is well known; for example, see \cite[Lemma 3.1]{BG}.\\

The set of all binary inhomogeneous indefinite quadratic forms considered up to scaling can be identified with $\mathcal{Q}=F\backslash G_a.$ Then the set of integer points of inhomogeneous binary indefinite quadratic forms is a set-valued function on $F\backslash G_a/\Gamma_a.$ By the duality principle of Cassels and Swinnerton-Dyer \cite{CSD}, and Dani \cite{D1}, the set of integer points of inhomogeneous binary quadratic forms can be realized either as a $F$-invariant function on $\mathcal{L}_a$ or a $\Gamma_a$-invariant function on $\mathcal{Q}$. This allows us to reframe dynamical properties of $F$-orbits of $\mathcal{L}_a$ as properties of quadratic forms. For example, an easy application of Mahler's compactness criterion shows that for a binary quadratic form $Q(y)=Q_0(gy)\in \mathcal{Q}$, the set $Q(\mathbb Z^2)$ has a gap at zero if and only if $F_0(g\mathbb Z^2)$ is bounded in $\mathcal{L}.$ From a result of Kleinbock and Margulis \cite{KM}, we know that the set of points of $\mathcal{L}$ whose $F_0$-orbit is bounded, is thick. Hence, the set of binary indefinite quadratic forms whose set of values at integer points has a gap at $0$ is thick in the space of binary indefinite quadratic forms. \\

The main theme here is that more sophisticated properties of the $F_0$ orbit of $\mathcal{L}$ yield a finer description of the set of values of a binary indefinite quadratic form at integer points. In \cite{KW}, Kleinbock and Weiss proved that given any countable set $B \subset \mathbb R$, the set of binary indefinite quadratic forms whose set of values at integer points avoids $B$ is thick in the space of all binary indefinite quadratic forms. They proved this result by proving a dynamical counterpart that the set points $\Lambda$ of $\mathcal{L}$ such that $F_0$-orbit of $\Lambda$ is bounded and the closure of $F_0$-orbit of $\Lambda$ avoids a countable union of manifolds (satisfying certain transversality conditions), is thick.\\

This section aims to explore the theme mentioned above for inhomogeneous binary indefinite quadratic forms and to generalise the main result of \cite[Theorem 1.3]{KW} in an inhomogeneous setting for the following cases:
\begin{enumerate}[label=(\roman*)]
\item Inhomogeneous binary indefinite quadratic forms where both forms and shifts are allowed to vary.
 \item Inhomogeneous binary indefinite quadratic forms with fixed shifts.
 \end{enumerate}
We now present the main results of this section. 
\begin{theorem}\label{thm:main_vary_both}
    For any countable set $B$ of $\mathbb R$ that does not contain zero, the set 
    \begin{equation*} \left \{\Lambda \in \mathcal L_a: \overline{Q_0(\Lambda)} \cap B = \emptyset \right\}
    \end{equation*}
    is thick. Dually, the set of binary inhomogeneous indefinite quadratic forms whose set of values at integer points avoids a given countable set of $\mathbb R$ not containing zero is thick in the space of all binary inhomogeneous indefinite quadratic forms.
\end{theorem}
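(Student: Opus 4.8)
The strategy is to transport the statement, via the Cassels--Swinnerton-Dyer and Dani duality recalled above, to the dynamics of $F$ on $\mathcal{L}_a$, to reduce the avoidance condition to one about bounded $F$-orbits missing a countable union of submanifolds, and then to quote the abundance results of \cite{AGK} (the inhomogeneous analogue of \cite[Theorem 1.3]{KW}). The starting observation is that $Q_0(b_t y)=Q_0(y)$ for all $t$, so $\overline{Q_0(\Lambda)}$ depends only on the $F$-orbit of $\Lambda\in\mathcal{L}_a$; in particular the set appearing in the theorem is $F$-invariant. Consequently, by the duality (which identifies the value set of an inhomogeneous binary form, as a $\Gamma_a$-invariant object on $\mathcal{Q}=F\backslash G_a$, with $Q_0(\cdot)$ as an $F$-invariant object on $\mathcal{L}_a$) together with the local product structure of $\mathcal{L}_a$ along $F$ and the fact that $\dim F=1$ (so that thickness and full Hausdorff dimension pass between $\mathcal{L}_a$ and $\mathcal{Q}$), it suffices to prove that $\{\Lambda\in\mathcal{L}_a:\overline{Q_0(\Lambda)}\cap B=\emptyset\}$ is thick in $\mathcal{L}_a$.

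For the reduction, the plan is as follows. For each $c\in B$ (so $c\neq0$) the level curve $\{y_1y_2=c\}\subset\mathbb{R}^2$ is a union of two $F_0$-orbits; choose a base point $w_c^{+}$ on one branch and $w_c^{-}$ on the other (e.g. $w_c^{\pm}=(\pm\sqrt{c},\pm\sqrt{c})$ for $c>0$ and $w_c^{\pm}=(\pm\sqrt{-c},\mp\sqrt{-c})$ for $c<0$), and set $\mathcal{N}_c^{\pm}:=\{\Lambda'\in\mathcal{L}_a:w_c^{\pm}\in\Lambda'\}$. A short computation identifies $\mathcal{N}_c^{\pm}$ with the embedded copy $\{g\mathbb{Z}^2+w_c^{\pm}:g\in\SL(2,\mathbb{R})\}\cong\mathcal{L}$ inside $\mathcal{L}_a$, a smooth submanifold which is not $F$-invariant. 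Now suppose $\Lambda$ has bounded $F$-orbit and $c\in\overline{Q_0(\Lambda)}\cap B$: picking $v^{(k)}\in\Lambda$ with $Q_0(v^{(k)})\to c$, after passing to a subsequence all $v^{(k)}$ lie in one open quadrant, and taking $e^{2t_k}=v_2^{(k)}/v_1^{(k)}$ gives $b_{t_k}v^{(k)}=(\sqrt{Q_0(v^{(k)})},\sqrt{Q_0(v^{(k)})})\to w_c^{+}$ (say). Boundedness of the orbit yields a subsequential limit $\Lambda_\infty=\lim b_{t_k}\Lambda\in\overline{F\Lambda}$, and since convergence in $\mathcal{L}_a$ entails convergence of lattice points we get $w_c^{+}\in\Lambda_\infty$, i.e. $\overline{F\Lambda}\cap\mathcal{N}_c^{+}\neq\emptyset$. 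Hence $\{\Lambda\in\mathcal{L}_a:F\text{-orbit of }\Lambda\text{ bounded and }\overline{F\Lambda}\cap\mathcal{Z}=\emptyset\}\subseteq\{\Lambda:\overline{Q_0(\Lambda)}\cap B=\emptyset\}$, where $\mathcal{Z}:=\bigcup_{c\in B}(\mathcal{N}_c^{+}\cup\mathcal{N}_c^{-})$ is a countable union of submanifolds (it is here that the hypothesis $0\notin B$ is used: for $c=0$ the curve degenerates and the normalization above breaks down).

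Finally, the plan is to invoke \cite{AGK}: the set of $\Lambda\in\mathcal{L}_a$ with bounded $F$-orbit whose orbit closure avoids a prescribed countable union of submanifolds that are transversal to the flow in the appropriate sense (in particular nowhere tangent, away from a lower-dimensional exceptional set, to the foliations by $H^{+}$- and $H^{-}$-orbits) is winning for a Schmidt-type game, hence thick; combined with the containment above this proves the theorem, and the dual statement for forms then follows from the duality as in the first paragraph. The one point requiring genuine care is the verification, uniformly in $c$, that each $\mathcal{N}_c^{\pm}$ meets the transversality hypotheses of \cite{AGK}; this should follow from the explicit description $\mathcal{N}_c^{\pm}=\{g\mathbb{Z}^2+w_c^{\pm}\}$, since for generic $g$ the $F$- and $H^{\pm}$-orbits through $g\mathbb{Z}^2+w_c^{\pm}$ immediately leave $\mathcal{N}_c^{\pm}$ (the obstruction $b_{-t}w_c^{\pm}-w_c^{\pm}\in g\mathbb{Z}^2$, and its $H^{\pm}$-analogue, holds only on a proper analytic subset), so the exceptional locus is negligible as required. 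I expect this verification of the transversality conditions, together with organizing the countably many branches as $c$ ranges over $B$, to be the main obstacle; the remaining steps are a faithful adaptation of the homogeneous argument of \cite{KW} to the affine setting.
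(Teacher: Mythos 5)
Your proposal follows essentially the same route as the paper: you encode avoidance of each value $c\in B$ by the submanifold of affine lattices containing a fixed vector on the level set $\{Q_0=c\}$ (the paper's $M_v$ of \eqref{equ:M_v}), show for bounded $F$-orbits that avoidance of these submanifolds by the orbit closure forces $\overline{Q_0(\Lambda)}\cap B=\emptyset$ (the relevant direction of Proposition~\ref{prop:Q_dyn}), and then invoke the winning/thickness theorem for bounded orbits avoiding a countable union of $(F,H^+)$- and $(F,H^-)$-transversal submanifolds (Theorem~\ref{thm:main_vary_both_dyn}, deduced from \cite[Theorem 2.8]{AGK} and reproved here by Schmidt games, following \cite{KW}). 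Your use of both branches $\mathcal{N}_c^{\pm}$, with the approximating lattice vectors confined to one quadrant after passing to a subsequence, is a careful touch; the paper works with a single $v(b)$ per value.

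The one place where your sketch goes astray is the deferred verification of transversality. What the quoted theorem needs is the infinitesimal, pointwise condition of the paper's definition: $T_x(Fx)\not\subset T_xM$ and $T_x(H^{\pm}x)\not\subset T_xM\oplus T_x(Fx)$ at \emph{every} $x\in M$. Your proposed check, that for $g$ outside a proper analytic subset the $F$- and $H^{\pm}$-orbits through $g\mathbb{Z}^2+w_c^{\pm}$ immediately leave $\mathcal{N}_c^{\pm}$ (via the obstruction $b_{-t}w_c^{\pm}-w_c^{\pm}\in g\mathbb{Z}^2$), is doubly insufficient: an orbit can exit a submanifold while being tangent to it at the point, and condition (II) involves the sum $T_xM\oplus T_x(Fx)$ rather than $T_xM$ alone; moreover an exceptional locus, however small, is not allowed by the hypothesis as it is used. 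Fortunately no genericity argument is needed at all: $\mathcal{N}_c^{\pm}$ is a single orbit of the conjugate subgroup $K_v=\{(I_2,v)(g,0)(I_2,v)^{-1}:g\in\SL(2,\mathbb R)\}$ with $v=w_c^{\pm}$, so both conditions reduce to the Lie-algebra non-containments $\Lie(F)\not\subset\Lie(K_v)$ and $\Lie(H^{\pm})\not\subset\Lie(F)\oplus\Lie(K_v)$, which hold uniformly at all points and are verified by the explicit basis computation of Lemma~\ref{lem:M_v_trans}, using precisely that both coordinates of $w_c^{\pm}$ are nonzero (this is where $0\notin B$ enters, as you noted). With that computation substituted for your genericity heuristic, your argument coincides with the paper's proof.
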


Next, we consider the set of all affine unimodular lattices with fixed shift $\xi_0 \in \mathbb R^2$
\begin{equation*}
    \mathcal{L}_{a}^{(\xi_0)}:=\{g\mathbb Z^2 + \xi_0 : g \in \SL(2,\mathbb R)
\}\end{equation*}
Note that this set is naturally identified with the space of unimodular lattices $\mathcal L.$
\begin{theorem}\label{thm:main_vary_form}
    For any countable set $B$ of $\mathbb R$ not containing zero, the set 
    \begin{equation*}
        \left\{\Lambda \in \mathcal{L}_a^{(\xi_0)}: \overline{Q_0(\Lambda)} \cap B = \emptyset \right\}
    \end{equation*}
    is thick in $\mathcal{L}_a^{(\xi_0)} \cong \mathcal L.$ Dually, the set of binary inhomogeneous indefinite quadratic forms with fixed shift whose set of values at integer point avoids a given countable set of $\mathbb R$ not containing zero is thick in the space of all binary inhomogeneous indefinite quadratic forms with fixed shift.
\end{theorem}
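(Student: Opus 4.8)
The plan is to transfer the problem to the dynamics of the diagonal flow $F$ on the space $\mathcal{L}_a$ of affine unimodular lattices and to deduce the theorem from the general bounded‑orbit/avoidance results of \cite{AGK}; the skeleton parallels the homogeneous case of \cite{KW}, the one genuinely new feature being that the relevant $F$‑orbit closures live in the ambient space $\mathcal{L}_a$ while the point $\Lambda$ is only allowed to range over the submanifold $\mathcal{L}_a^{(\xi_0)}\cong\mathcal{L}$.

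First I would set up the dictionary between values and orbits. Write $\Lambda=g\mathbb{Z}^2+\xi_0$. Since $Q_0(b_tw)=Q_0(w)$ for every $w\in\mathbb{R}^2$, one gets $Q_0(b_t\Lambda)=Q_0(\Lambda)$, so $\overline{Q_0(\Lambda)}$ is an invariant of the orbit $F\Lambda$, even though $F\Lambda$ is not contained in $\mathcal{L}_a^{(\xi_0)}$. For $a\neq 0$, a normalization by the diagonal flow shows that $a\in\overline{Q_0(\Lambda)}$ iff there are $t_k\in\mathbb{R}$ and $n_k\in\mathbb{Z}^2$ with $b_{t_k}(gn_k+\xi_0)\to v_a$ for some $v_a$ with $Q_0(v_a)=a$: given $Q_0(w_k)\to a$, choose $t_k$ so that the first coordinate of $b_{t_k}w_k$ equals $\pm\sqrt{|a|}$, which forces the second to converge too (it is here that $0\notin B$ is used decisively). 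Put
\[
M_a:=\bigl\{\Lambda'\in\mathcal{L}_a:\ \Lambda'\cap Q_0^{-1}(a)\neq\emptyset\bigr\}=\bigcup_{n\in\mathbb{Z}^2}\ \pi\bigl(\{(g,v)\in G_a:\ Q_0(gn+v)=a\}\bigr),
\]
$\pi:G_a\to\mathcal{L}_a$ the quotient map; each summand is a real‑analytic hypersurface, so $M_a$ is a countable union of codimension‑one submanifolds. If $F\Lambda$ is bounded, then extracting a convergent subsequence of the $b_{t_k}\Lambda$ and using that a convergent sequence of vectors in a convergent sequence of affine lattices converges to a vector of the limit lattice yields $a\in\overline{Q_0(\Lambda)}\Rightarrow\overline{F\Lambda}\cap M_a\neq\emptyset$. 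Consequently
\[
F\Lambda\ \text{bounded},\quad\overline{F\Lambda}\cap\bigcup_{a\in B}M_a=\emptyset\qquad\Longrightarrow\qquad\overline{Q_0(\Lambda)}\cap B=\emptyset,
\]
so, since thickness is monotone, it suffices to prove that the set of $\Lambda\in\mathcal{L}_a^{(\xi_0)}$ with $F\Lambda$ bounded and $\overline{F\Lambda}\cap\bigcup_{a\in B}M_a=\emptyset$ is thick in $\mathcal{L}_a^{(\xi_0)}\cong\mathcal{L}$.

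I would then check the hypotheses that let one quote \cite{AGK}. Boundedness depends only on the base lattice: the bundle map $\mathcal{L}_a\to\mathcal{L}$, $g\mathbb{Z}^2+v\mapsto g\mathbb{Z}^2$, is proper (compact torus fibres) and $F$‑equivariant, so $F\Lambda$ is bounded iff $F_0(g\mathbb{Z}^2)$ is bounded in $\mathcal{L}$, and the set of such $g\mathbb{Z}^2$ is thick by the theorem of Kleinbock and Margulis \cite{KM} recalled above. For the avoidance, the decisive transversality is that no $M_a$ contains a leaf of the expanding horospherical foliation of $b_1$ on $\mathcal{L}_a$, i.e.\ that of $W^+:=H_0^+\ltimes(\mathbb{R}\times\{0\})$: at a point of $M_a$ the marked vector $w$ satisfies $Q_0(w)=a\neq 0$, hence has nonzero second coordinate, and under the $W^+$‑action $w$ moves to $h^+_sw+(r,0)$, with $Q_0$‑value $(w_1+sw_2+r)w_2$, which leaves the level $a$ for $(s,r)$ near $0$; real‑analyticity then upgrades this to the quantitative nondegeneracy required in \cite{AGK}. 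Granting these, one runs the Cantor/Schmidt‑game construction of \cite{AGK} inside $\mathcal{L}_a^{(\xi_0)}$: through a generic point $g\mathbb{Z}^2+\xi_0$ passes the curve $s\mapsto (gh^+_s)\mathbb{Z}^2+\xi_0$, which lies in $\mathcal{L}_a^{(\xi_0)}$, expands under $b_t$ as $t\to+\infty$ whenever $\Ad(g)$ carries the horospherical direction to a vector with nonzero expanding component (true off a lower‑dimensional set of $g$), and by the transversality above meets each $M_a$ transversally; inductively discarding the parameters for which $b_{t_k}$ of this curve comes too close to infinity or to one of the first $k$ submanifolds $M_a$ leaves a Cantor set of full Hausdorff dimension in every open subset of $\mathcal{L}_a^{(\xi_0)}$.

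The main obstacle is precisely this last step: carrying out the dynamical argument with the base point confined to $\mathcal{L}_a^{(\xi_0)}$, which, unlike in the ``vary both parameters'' situation of Theorem \ref{thm:main_vary_both}, is neither $F$‑invariant nor a horospherical leaf. One must verify that $\mathcal{L}_a^{(\xi_0)}$ is an admissible family for \cite{AGK} --- that it is foliated by curves which expand under $F$ and meet every $M_a$ transversally, with uniform enough control to feed the construction --- and this is where the hypothesis on $\xi_0$ (in particular that $b_t$ has no nonzero fixed vector, so for $\xi_0\neq 0$ the flow $F$ is genuinely transverse to $\mathcal{L}_a^{(\xi_0)}$) and the structure of $\SL(2,\mathbb{R})$ enter; for $\xi_0=0$ the statement is exactly that of \cite{KW}. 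A secondary but essential point is that the boundedness hypothesis cannot be dropped in the reduction: without it one can have $a\in\overline{Q_0(\Lambda)}$ while $b_{t_k}\Lambda$ leaves every compact set and $\overline{F\Lambda}$ misses $M_a$, so passing to the closure of $Q_0(\Lambda)$ in the statement is exactly what obliges one to work inside the bounded‑orbit locus.
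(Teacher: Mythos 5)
Your overall strategy (translate the problem into $F$-dynamics on $\mathcal{L}_a$ and invoke \cite{AGK}) is the same as the paper's, but the proposal breaks down at the two places where the actual content lies. First, the sets you propose to avoid do not satisfy the hypotheses of the machinery you want to quote: your $M_a$, the set of affine lattices meeting the level set $Q_0^{-1}(a)$, is a union of hypersurfaces each of which is \emph{$F$-invariant} (since $Q_0\circ b_t=Q_0$, each piece $\{(g,v):Q_0(gn+v)=a\}$ is preserved by the left $F$-action), so the condition $T_x(Fx)\not\subset T_xM$ --- condition (I) of $(F,H)$-transversality, which is part of the hypotheses of \cite[Theorem 2.8]{AGK} --- fails at every point; your transversality computation along $W^+=H_0^+\ltimes(\mathbb R\times\{0\})$ does not address this. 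The paper, following \cite{KW}, avoids instead the codimension-two submanifolds $M_v$ of \eqref{equ:M_v} (lattices containing one fixed vector $v$ with $Q_0(v)=b$): Proposition \ref{prop:Q_dyn} shows, using boundedness of the orbit and $0\notin B$, that avoiding these suffices, and Lemma \ref{lem:M_v_trans} verifies exactly the $(F,H^+)$- and $(F,H^-)$-transversality that lets the AGK/KW game run (compactness is then arranged by cutting into compact pieces with boundary, another point you pass over).

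Second, and more seriously, the step you yourself call ``the main obstacle'' --- carrying out the construction with the base point confined to $\mathcal{L}_a^{(\xi_0)}$ --- is precisely the content of Theorem \ref{thm:main_fix_shift_dyn}, and your sketch of it does not work: a family of expanding curves $s\mapsto gh^+_s\mathbb{Z}^2+\xi_0$ through ``generic'' $g$, valid only off a lower-dimensional exceptional set, cannot produce a winning (hence thick-in-every-open-set) subset of $\mathcal{L}_a^{(\xi_0)}$, and the hypothesis on $\xi_0$ you suggest is needed (singling out $\xi_0\neq 0$) is neither present in the statement nor necessary. The observation you are missing is that $\mathcal{L}_a^{(\xi_0)}$ is exactly the orbit of the point $\mathbb{Z}^2+\xi_0$ under the conjugate subgroup $H=(I,\xi_0)\bigl(\SL(2,\mathbb R)\ltimes\{0\}\bigr)(I,\xi_0)^{-1}$, so that \cite[Theorem 2.8]{AGK}, applied with $G=G_a$, $\Gamma=\Gamma_a$, $F=F_0\ltimes\{0\}$, $Z=M$ and this $H$, yields directly that the set \eqref{equ:main_dyn_fix_shift} is HAW on that orbit; thickness then follows, and the countably many conditions (one per element of $B$, plus boundedness, for which your reduction via the proper torus-bundle projection and \cite{KM} does match the paper) are combined through the countable-intersection stability of the winning property --- not of thickness, which is not preserved under countable intersections. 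Without these two ingredients the argument, as proposed, does not go through.
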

 

Given a non-zero constant $s \in \mathbb R$ and an indefinite binary quadratic form $Q_{\xi},$ first we dynamically interpret the set of binary quadratic forms whose values at integer points miss $s$. We follow the approach of Kleinbock and Weiss (\cite[\S 2]{KW}).\\ 

To start with, fix $0 \neq s \in \mathbb R$. Then one can easily choose a vector $v\in \mathbb R^2$ such that $Q_0(v)=s.$ For example, we can choose 
\begin{equation} \label{eqn:v}   
v= \left \{ \begin{array}{lcl}  (\sqrt{s},\sqrt{s}) & \mbox{if} \,\, s>0 \\ (-\sqrt{-s},\sqrt{-s})&\mbox{otherwise}.
\end{array} \right.\end{equation}
 We fix this choice of $v$ for the rest of this section. Now consider the set $M_v$ of all affine unimodular lattice of $\mathbb R^2$ which contains $v$, i.e.,
 
\begin{equation}\label{equ:M_v}
M_v :=\{\Lambda\in \mathcal L_a: v \in \Lambda \}.
\end{equation}
Note that $M_v $ is a $3$-dimensional submanifold of $\mathcal L_a.$ Now we have the following proposition.
\begin{proposition}\label{prop:Q_dyn}
    Let $\Lambda \in \mathcal L_a$ be such that $F\Lambda$ is bounded in $\mathcal L_a.$ Also let $s\neq 0,$ now if $v$ is defined as earlier in \eqref{eqn:v}. Then $s \notin \overline{Q_0(\Lambda)}$ if and only if $\overline{F\Lambda} \cap M_v=\emptyset.$
\end{proposition}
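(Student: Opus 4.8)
The statement is an equivalence, and I would prove the two implications separately, following the homogeneous template of Kleinbock and Weiss \cite[\S2]{KW}. Two elementary facts do most of the work. First, $Q_0$ is $F$-invariant: since $Q_0(b_ty)=(e^{t}y_1)(e^{-t}y_2)=Q_0(y)$ for every $y=(y_1,y_2)$, we have $Q_0(b_t\Lambda)=Q_0(\Lambda)$ as subsets of $\mathbb R$ for all $t$ and all affine lattices $\Lambda$. Second, $\Lambda_n\to\Lambda$ in $\mathcal L_a$ is equivalent to ``pointwise'' convergence of the lattices: every $y\in\Lambda$ is a limit of points $y_n\in\Lambda_n$, and conversely every convergent sequence $(y_n)$ with $y_n\in\Lambda_n$ has its limit in $\Lambda$. (Write lattices as $g\mathbb Z^2+w$ with $(g,w)\in G_a$ varying continuously and use discreteness of $\mathbb Z^2$.)

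The implication ``$\overline{F\Lambda}\cap M_v=\emptyset\Rightarrow s\notin\overline{Q_0(\Lambda)}$'' is the routine one, and needs no boundedness hypothesis; I argue by contraposition. If some $\Lambda'\in\overline{F\Lambda}$ contains $v$, choose $t_n$ with $b_{t_n}\Lambda\to\Lambda'$ and, by the second fact, points $y_n\in b_{t_n}\Lambda$ with $y_n\to v$. Then $Q_0(y_n)\in Q_0(b_{t_n}\Lambda)=Q_0(\Lambda)$ while $Q_0(y_n)\to Q_0(v)=s$, so $s\in\overline{Q_0(\Lambda)}$.

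For the converse, suppose $s\in\overline{Q_0(\Lambda)}$ and pick $z_n\in\Lambda$ with $Q_0(z_n)\to s\neq0$. For $n$ large the first coordinate $z_n^{(1)}$ is nonzero; normalize by choosing $t_n$ with $e^{t_n}|z_n^{(1)}|=\sqrt{|s|}$, so that $b_{t_n}z_n$ has first coordinate of modulus $\sqrt{|s|}$ and, by $F$-invariance, $Q_0(b_{t_n}z_n)=Q_0(z_n)\to s$; this forces the second coordinate of $b_{t_n}z_n$ to $\pm\sqrt{|s|}$ with the sign dictated by $\operatorname{sign}(s)$, so $b_{t_n}z_n$ subconverges to $v$ or to $-v$. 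Since $F\Lambda$ is bounded, Mahler's criterion makes $\overline{F\Lambda}$ compact, so along a further subsequence $b_{t_n}\Lambda\to\Lambda'\in\overline{F\Lambda}$, and by the pointwise-convergence fact the limit of $b_{t_n}z_n$ lies in $\Lambda'$. If that limit is $v$, we are done.

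The main obstacle is exactly this last dichotomy, which is where the inhomogeneous case genuinely departs from \cite{KW}: a genuine lattice satisfies $\Lambda=-\Lambda$, so there $M_v=M_{-v}$ and the sign never matters, whereas an affine lattice is not centrally symmetric and a priori all the witnessing points $z_n$ could lie on the branch of $\{Q_0=s\}$ opposite to the one through $v$, making the limit $-v$ rather than $v$. I would resolve this using the boundedness of $F\Lambda$ in an essential way: since $\overline{F\Lambda}$ is compact and $F$-invariant, and each of the two branches of $\{Q_0=s\}$ is a single $F_0$-orbit whose points are the negatives of those on the other branch, one should be able to convert a lattice in $\overline{F\Lambda}$ containing $-v$ into a (still admissible) lattice in $\overline{F\Lambda}$ containing $v$. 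In any case, for the applications in Theorems~\ref{thm:main_vary_both} and~\ref{thm:main_vary_form} it suffices to run the entire argument with $M_v\cup M_{-v}$ in place of $M_v$ — an equally admissible countable family of submanifolds for the dynamical input from \cite{KW} and \cite{AGK} — so the dichotomy is harmless downstream.
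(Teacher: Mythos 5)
Your argument is in substance the paper's own proof: one direction by passing from $b_{t_n}\Lambda\to\Lambda_0\ni v$ to points of $\Lambda$ flowing onto $v$ and using the $F$-invariance of $Q_0$; the other by normalizing witnesses $u_n$ with $Q_0(u_n)\to s$ onto the line through $v$ and using boundedness of $F\Lambda$ (Mahler) to extract a limit lattice in $\overline{F\Lambda}$ containing the limit point. (One labelling slip: in your second paragraph you announce the implication $\overline{F\Lambda}\cap M_v=\emptyset\Rightarrow s\notin\overline{Q_0(\Lambda)}$, but what you prove is $\overline{F\Lambda}\cap M_v\neq\emptyset\Rightarrow s\in\overline{Q_0(\Lambda)}$, i.e.\ the contrapositive of the \emph{other} direction; since your third paragraph handles the remaining implication, both directions of the equivalence are still covered.)

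The sign dichotomy you isolate is a genuine subtlety, and the paper's proof does not address it either: it simply asserts that the normalized points satisfy $b_{t_n}u_n\to v$, whereas if the witnesses eventually lie on the branch of $\{Q_0=s\}$ through $-v$ (both coordinates negative when $s>0$), they converge to $-v$ and one only gets $\overline{F\Lambda}\cap M_{-v}\neq\emptyset$. Your first suggested remedy should be dropped: affine lattices are not centrally symmetric, $-\Lambda$ need not belong to $\overline{F\Lambda}$, and for, say, a compact $F$-orbit whose lattices contain $-v$ there is no mechanism producing a lattice in the orbit closure containing $v$; so no ``conversion'' of $-v$ into $v$ inside $\overline{F\Lambda}$ is available. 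Your second remedy is the correct and costless one: work with $M_v\cup M_{-v}$, equivalently prove the statement ``$s\notin\overline{Q_0(\Lambda)}$ if and only if $\overline{F\Lambda}\cap(M_v\cup M_{-v})=\emptyset$'', which is exactly what your two paragraphs establish. Since $-v$ also has both coordinates nonzero, Lemma \ref{lem:M_v_trans} applies verbatim to $M_{-v}$, and taking $M=\bigcup_{b\in B}\bigl(M_{v(b)}\cup M_{-v(b)}\bigr)$ in the deduction of Theorems \ref{thm:main_vary_both} and \ref{thm:main_vary_form} changes nothing downstream; note that the implication actually used there (orbit closure misses $M$ implies the value $b$ is avoided) is precisely the one affected by the sign issue, so this patch is not optional but it fully suffices.
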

\begin{proof}
First suppose that $s \notin \overline{Q_0(\Lambda)}$. We want to show that $\overline{F\Lambda} \cap M_v=\emptyset$. 

If possible let, $\Lambda_0 \in \overline{F\Lambda}$ contains $v.$ Then there exists $\{t_n\}_n$ such that $b_{t_n}\Lambda \rightarrow \Lambda_0.$ This implies there exists $\{v_n\}_n \subset \Lambda $ such that $b_{t_n}v_n \rightarrow v.$ Now $Q_0(v_n)=Q_0(b_{t_n}v_n) \rightarrow Q_0(v)=s,$ and this is contradiction to the fact that $s \notin \overline{Q_0(\Lambda)}$. Hence, we are done.

Conversely suppose that $\overline{F\Lambda} \cap M_v=\emptyset$ and if possible let $s \in \overline{Q_0(\Lambda)}$. Then $\exists$ $\{u_n\}_n \subset \Lambda $ such that $Q_0(u_n)\rightarrow s$ as $n\rightarrow \infty.$ Now for each $n \in \mathbb N,$ we choose $b_{t_n}$ such that $b_{t_n}u_n $ belongs to the line passing through $v$ and therefore $b_{t_n}u_n \rightarrow v $ as $n \rightarrow \infty.$ 

Again since $F\Lambda$ is relatively compact, $\exists$ a limit point $\Lambda_0$ of the sequence $\{b_{t_n}\Lambda\}_n \subset F\Lambda$. From above, it is clear that $v \in \Lambda_0$. Therefore $\Lambda_0 \in \overline{F\Lambda} \cap M_v,$ and this is a contradiction.

\end{proof}
We prove Theorem \ref{thm:main_vary_both} by proving its dynamical reformulation. To dynamically reformulate Theorem \ref{thm:main_vary_both}, we need the following definitions.
\begin{definition}
    Let us assume that $H$ be a connected subgroup of $G_a$ and $H \neq F.$ Also let $M $ be a submanifold of $\mathcal L_a.$ We say that $M$ is $(F,H)$-transversal at $x \in M$ if the following holds:
    \begin{enumerate}[label=(\Roman*)]
        \item\label{item:cond_F} $T_x(Fx) \not\subset T_x(M)$ for any $x \in M$
        \item\label{item:cond_F_H}$T_x(Hx)\not\subset T_x(M) \oplus T_x(Fx)$ for any $x \in M$.
    \end{enumerate}
Furthermore, we say that $M$ is $(F,H)$-transversal if $M$ is $(F,H)$-transversal at every point of $M.$
\end{definition}
It is easy to see that if $M$ is an orbit of a Lie subgroup $K$ of $G_a$, then condition \ref{item:cond_F} and \ref{item:cond_F_H} can be restated as 
\begin{equation}
    \Lie(F) \nsubset \Lie(K)
\end{equation}
and 
\begin{equation}
    \Lie(H) \nsubset \Lie(F) \oplus \Lie(K)
\end{equation}
respectively, where $\Lie(L)$ is the Lie algebra of the Lie group $L.$
\begin{lemma}\label{lem:M_v_trans}
   $M_v$ defined as in \eqref{equ:M_v} is a $3$-dimensional $(F,H^+)$- and $(F,H^-)$-transversal submanifold of $\mathcal{L}_a.$ 
\end{lemma}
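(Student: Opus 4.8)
The plan is to exhibit $M_v$ as a single group orbit, read off the Lie algebras of the acting group and of $F,H^{+},H^{-}$, and then invoke the orbit version of the transversality criterion recorded just above the lemma, so that the whole statement collapses to an elementary computation in $\SL(2,\mathbb R)\ltimes\mathbb R^2$ which uses only that both coordinates of $v$ are nonzero.

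First I would identify $M_v$ as an orbit. An affine unimodular lattice contains $v$ precisely when it equals $g\mathbb Z^2+v$ for some $g\in\SL(2,\mathbb R)$, so $M_v=\{(g,v)\Gamma_a:g\in\SL(2,\mathbb R)\}$; since $(g,v)=(e,v)(g,0)$ in $G_a$, this is the left translate $(e,v)\cdot\mathcal L_0$ of the set $\mathcal L_0:=\{(g,0)\Gamma_a:g\in\SL(2,\mathbb R)\}$ of unshifted unimodular lattices, equivalently the orbit of $x_0:=(e,v)\Gamma_a$ under
\[
L_v:=(e,v)\bigl(\SL(2,\mathbb R)\ltimes\{0\}\bigr)(e,v)^{-1}=\operatorname{Stab}_{G_a}(v)=\{(g,(I-g)v):g\in\SL(2,\mathbb R)\}.
\]
Thus $M_v$ is an orbit of a copy of $\SL(2,\mathbb R)$ with discrete stabilizer $\{(g,(I-g)v):g\in\SL(2,\mathbb Z)\}$, which re-confirms that it is an embedded $3$-dimensional submanifold and, more importantly, puts us in the situation of the reformulation in the excerpt. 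Since $M_v$ is the $L_v$-orbit and $H^{+},H^{-}$ are connected subgroups distinct from $F$, transversality for $H^{+}$ and for $H^{-}$ is equivalent to
\[
\Lie(F)\not\subset\Lie(L_v)\qquad\text{and}\qquad\Lie(H^{\pm})\not\subset\Lie(F)+\Lie(L_v).
\]

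Next I would compute the Lie algebras inside $\Lie(G_a)=\mathfrak{sl}(2,\mathbb R)\ltimes\mathbb R^2$. Differentiating the description of $L_v$ gives $\Lie(L_v)=\{(X,-Xv):X\in\mathfrak{sl}(2,\mathbb R)\}$, while $\Lie(F)=\mathbb R\,(A,0)$ with $A=\diag(1,-1)$ and $\Lie(H^{\pm})=\mathbb R\,(N^{\pm},0)$, where $N^{+}=\left(\begin{smallmatrix}0&1\\0&0\end{smallmatrix}\right)$ and $N^{-}=\left(\begin{smallmatrix}0&0\\1&0\end{smallmatrix}\right)$. Write $v=(v_1,v_2)$; since $Q_0(v)=v_1v_2=s\neq 0$ by the choice of $v$ in \eqref{eqn:v}, both $v_1,v_2\neq 0$. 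For condition \ref{item:cond_F}: $(A,0)\in\Lie(L_v)$ would force $Av=0$, which is impossible as $Av=(v_1,-v_2)\neq 0$. For condition \ref{item:cond_F_H}: a direct computation shows $\Lie(F)+\Lie(L_v)=\{(M,w):w+Mv\in\mathbb R\cdot Av\}$, so $(N^{\pm},0)$ lies in it exactly when $N^{\pm}v\in\mathbb R\cdot Av$; but $N^{+}v=(v_2,0)$ and $N^{-}v=(0,v_1)$ are each linearly independent of $Av=(v_1,-v_2)$, the corresponding $2\times 2$ determinants being $-v_2^{2}$ and $-v_1^{2}$, both nonzero. Hence both transversality conditions hold at every point of $M_v$, for $H^{+}$ and for $H^{-}$, and $M_v$ is $3$-dimensional, which is the assertion.

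I do not anticipate a serious obstacle. The only genuine content is the correct identification of $M_v$ as the $L_v$-orbit of $x_0$, so that the group-theoretic reformulation of the excerpt applies, together with some care in the semidirect-product bookkeeping; after that, nothing about $v$ is used beyond $Q_0(v)\neq 0$, which is exactly what makes both its coordinates nonzero. (If one preferred not to quote the reformulation, the same conclusions can be obtained pointwise by transporting $T_xM_v$, $T_x(Fx)$ and $T_x(H^{\pm}x)$ into $\Lie(G_a)$ via the derivative of left translation and using the $\operatorname{Ad}(G_a)$-invariance of $\Lie(L_v)$ — but this merely re-derives the quoted fact.)
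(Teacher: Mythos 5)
Your proposal is correct and follows essentially the same route as the paper: you realize $M_v$ as the orbit of the conjugated copy $(e,v)\bigl(\SL(2,\mathbb R)\ltimes\{0\}\bigr)(e,v)^{-1}$ (the paper's $K_v$), apply the orbit reformulation of $(F,H^\pm)$-transversality, and verify the two Lie algebra non-containments using only that both coordinates of $v$ are nonzero. Writing $\Lie(K_v)=\{(X,-Xv)\}$ in closed form and checking via $2\times 2$ determinants is just a compact presentation of the paper's basis computation.
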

 \begin{proof}
     Consider the natural embedding of the space of unimodular lattices of $\mathbb R^2$ in the space of affine unimodular lattices of $\mathbb R^2$ and denote it by $i.$ Hence $i : \mathcal L \rightarrow \mathcal{L}_a$ given by $i(\Lambda)=\Lambda$ for any $\Lambda \in \mathcal L.$ Now consider the diffeomorphism $h: \mathcal{L}_a \rightarrow \mathcal{L}_a$ given by
     \begin{equation*}
         h(\Lambda)=\Lambda +v \,\,\,\, \text{for any}\,\, \Lambda \in \mathcal{L}_a.
     \end{equation*}
Then we have $h \circ i(\Lambda)=M_v$. From this, we get that $M_v $ is a 3-dimensional embedded submanifold of $\mathcal{L}_a$, since $\mathcal L$ is a 3-dimensional embedded submanifold of $\mathcal{L}_a.$

Note that $M_v$ is an orbit of the Lie subgrup $K_v$ of $G_a$, where $K_v=\{(I_2,v)(g,0)(I_2,v)^{-1}=(g,v-gv): g \in \SL_2(\mathbb R)\}.$ Recall that if $v=(v_1,v_2)$, then both the coordinates of $v$ are non-zero, i.e., $v_1,v_2 \neq 0$.  Hence the $(F,H^+)$- and $(F,H^-)$-transversality of the submanifold $M_v$ follows from the following:
\begin{equation*}
    \Lie(K_v)= \left\langle\left\{\left(\begin{bmatrix}
        1&0\\0&-1 
    \end{bmatrix},\begin{pmatrix}
        -v_1 \\ v_2
    \end{pmatrix}\right), \left(\begin{bmatrix}
        0&1\\0&0 
    \end{bmatrix}, \begin{pmatrix}
         -v_2 \\ 0
    \end{pmatrix}\right), \left(\begin{bmatrix}
        0&0\\1&0 
    \end{bmatrix},\begin{pmatrix}
        0 \\ -v_1
    \end{pmatrix}\right) \right\}\right\rangle,
\end{equation*}
\begin{equation*}
    \Lie(F)=\left\langle\left\{\left(\begin{bmatrix}
        1&0\\0&-1 
    \end{bmatrix}, \begin{pmatrix}
        0\\0
    \end{pmatrix}\right) \right\}\right\rangle, \Lie(H^+)=\left\langle\left\{\left(\begin{bmatrix}
        0&1\\0&0 
    \end{bmatrix},\begin{pmatrix}
        0\\0
    \end{pmatrix}\right) \right\}\right\rangle,
\end{equation*}
and
\begin{equation*}
    \Lie(H^-)=\left\langle\left\{\left(\begin{bmatrix}
        0&0\\1&0 
    \end{bmatrix},\begin{pmatrix}
        0\\0
    \end{pmatrix}\right) \right\}\right\rangle.
\end{equation*}
Here given a subset $A$ of the vector space $M_{2\times 2}(\mathbb R)$ of $2\times 2 $ matrices over $\mathbb R,$ $\langle A \rangle$ denotes the subspace of $M_{2\times 2}(\mathbb R)$ generated by A. 
 \end{proof}

Now the following are the dynamical reformulations of Theorem \ref{thm:main_vary_both} and Theorem \ref{thm:main_vary_form} respectively.

\begin{theorem}\label{thm:main_vary_both_dyn}
    Suppose that $M$ be a countable union of submainfolds (of $\mathcal L_a$), which are both $(F,H^+)$- and $(F,H^-)$-transversal. Then the following set
    \begin{equation}\label{equ:main_dyn_vary_both}
        \{ \Lambda \in \mathcal L_a:F\Lambda \,\, \text{is bounded in}\,\, \mathcal L_a \,\, \text{and}\,\, \,\,\overline{F\Lambda} \cap M =\emptyset \}
    \end{equation}
    is thick.
\end{theorem}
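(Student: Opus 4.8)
The plan is to deduce Theorem \ref{thm:main_vary_both_dyn} from the general results on the shrinking target / avoidance problem for one-parameter diagonal flows on homogeneous spaces, specifically the machinery developed in the homogeneous setting in \cite{KW} and its generalization in \cite{AGK}. The set in \eqref{equ:main_dyn_vary_both} is the intersection of two sets: the set $\mathcal{B}_F := \{\Lambda \in \mathcal{L}_a : F\Lambda \text{ is bounded}\}$, and, for each submanifold $M_j$ in the countable union $M = \bigcup_j M_j$, the set $\mathcal{A}_j := \{\Lambda : \overline{F\Lambda} \cap M_j = \emptyset\}$. Since a countable intersection of thick sets is thick (thickness is preserved under countable intersections because full Hausdorff dimension in every open set is, and the property ``$\dim = \dim$ on every open set'' survives countable intersection via a Baire-category-type argument on the dimension function — or more directly, one uses that each of these sets is \emph{winning} for a suitable Schmidt game, and winning sets are closed under countable intersection and are thick), it suffices to show that $\mathcal{B}_F$ is thick and that each single set $\{\Lambda : F\Lambda \text{ bounded}, \ \overline{F\Lambda} \cap M_j = \emptyset\}$ is thick.

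**Key steps.** First I would record that $\mathcal{L}_a = G_a/\Gamma_a$ is a homogeneous space of the Lie group $G_a = \SL(2,\R) \ltimes \R^2$, that $F = F_0 \ltimes \{0\}$ is a one-parameter $\R$-diagonalizable (Ad-semisimple, $\R$-split) subgroup, and that the expanding and contracting horospherical subgroups of $b_1$ acting on $G_a$ are precisely $H^+$ together with the ``vertical'' translation directions, and $H^-$ together with the complementary vertical directions — the full unstable/stable decomposition of $\mathfrak{g}_a$ under $\Ad(b_1)$ involves the nilpotent parts in $\mathfrak{sl}_2$ \emph{and} the two coordinate directions of $\R^2$ (one expanding, one contracting), with $\Lie(F)$ as the neutral part. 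Next I would invoke the bounded-orbit result: by the Kleinbock--Margulis type theorem (as cited in the excerpt via \cite{KM}, now in the affine setting — this is exactly the kind of statement covered by \cite{AGK}), the set $\mathcal{B}_F$ of points with bounded $F$-orbit is thick in $\mathcal{L}_a$; in fact it is a winning set for Schmidt's game played along the transverse directions. Then, for the avoidance part, I would apply the main technical input: for a submanifold $M_j$ that is transversal to the flow in the sense that (I) $T_x(Fx) \not\subset T_x(M_j)$ and (II) $T_x(H^\pm x) \not\subset T_x(M_j) \oplus T_x(Fx)$, the set of $\Lambda$ whose $F$-orbit closure misses $M_j$ while remaining bounded is winning (hence thick). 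The transversality conditions are exactly what is needed to run the inductive ``cut-and-project'' construction of \cite{KW}: condition (I) lets one move transverse to $M_j$ by flowing, and condition (II) with respect to \emph{both} $H^+$ and $H^-$ ensures that the transverse perturbations available (which live in the horospherical directions, which is where the Schmidt game for bounded orbits is played) genuinely push off $M_j$ and are not absorbed into the tangent-plus-flow directions. Since this holds for each $j$, and winning sets are closed under countable intersection, the full set \eqref{equ:main_dyn_vary_both} is winning and in particular thick.

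**Main obstacle.** The hard part will be verifying that the abstract avoidance-and-boundedness theorem of \cite{AGK} (or the adaptation of the \cite{KW} argument) applies verbatim in the semidirect-product setting $G_a = \SL(2,\R) \ltimes \R^2$ rather than in a semisimple or even reductive group — one must check that $\Gamma_a = \SL(2,\Z) \ltimes \Z^2$ is a lattice (it is, being an extension of a lattice by a lattice in the normal abelian part), that the flow $b_t$ has the right hyperbolicity (the $\R^2$-part splits as one expanding $+$ one contracting coordinate, so the neutral direction is still just $\Lie(F_0)$, which is the favourable case), and crucially that the Schmidt game used for the bounded-orbit part can be played using perturbations transverse to $F$ that also have nontrivial components in the directions needed to escape $M_j$ — this is precisely guaranteed by the $(F,H^+)$- \emph{and} $(F,H^-)$-transversality, so one needs to confirm that the two horospherical subgroups together span a complement to $\Lie(F)$ modulo $\Lie(M_j)$. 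A secondary technical point is handling the non-compactness of $\mathcal{L}_a$ (so that ``$F\Lambda$ bounded'' is a genuine restriction) simultaneously with the avoidance game; this is exactly the coupling that \cite{KW} resolves and that \cite{AGK} generalizes, so the proof amounts to checking the hypotheses of those results rather than reproving them.
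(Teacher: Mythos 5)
Your proposal follows essentially the same route as the paper: the paper likewise decomposes the set into the bounded-orbit set and the avoidance sets (for the semigroups $F^{+}$ and $F^{-}$ separately, after cutting each submanifold of $M$ into countably many compact pieces), shows each piece is hyperplane absolute winning via the Schmidt-game machinery of \cite{KW} and \cite{AGK} (indeed the paper explicitly notes the theorem follows from Theorem 2.8 of \cite{AGK}), and concludes by closure of HAW sets under countable intersection and thickness of winning sets. The only caveat is your parenthetical claim that thickness itself is preserved under countable intersections, which is false (e.g.\ the badly approximable and the well approximable numbers are both thick in $\mathbb R$ but disjoint); it is the winning/HAW property, as you correctly fall back on, that survives countable intersections and carries the argument.
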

 
\begin{theorem}\label{thm:main_fix_shift_dyn}
  Suppose that $M$ be a countable union of submainfolds (of $\mathcal L_a$), which are both $(F,H^+)$- and $(F,H^-)$-transversal. Then the following set
    \begin{equation}\label{equ:main_dyn_fix_shift}
        \{ \Lambda \in \mathcal{L}_a^{(\xi_0)}:F\Lambda \,\, \text{is bounded in}\,\, \mathcal L_a \,\, \text{and}\,\, \,\,\overline{F\Lambda} \cap M =\emptyset \}
    \end{equation}
    is thick in   $\mathcal{L}_a^{(\xi_0)} \cong \mathcal L.$
\end{theorem}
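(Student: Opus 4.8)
The plan is to transport the problem to the $3$-dimensional submanifold $\mathcal{L}_a^{(\xi_0)}$ and run the avoidance/winning-set machinery of An--Guan--Kleinbock \cite{AGK} there, parallel to the proof of Theorem \ref{thm:main_vary_both_dyn} but with a proper submanifold of $\mathcal{L}_a$ in place of the full space. The first step is to record the structure of $\mathcal{L}_a^{(\xi_0)}$: writing $G_0=\SL(2,\mathbb{R})\ltimes\{0\}$, one has $\mathcal{L}_a^{(\xi_0)}=(I_2,\xi_0)\,G_0\,\Gamma_a/\Gamma_a$, so it is the image under the diffeomorphism $\varphi\colon\mathcal{L}\to\mathcal{L}_a^{(\xi_0)}$, $g\Gamma\mapsto g\mathbb{Z}^2+\xi_0$, of the homogeneous copy $\mathcal{L}\cong G_0\Gamma_a/\Gamma_a\subset\mathcal{L}_a$. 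Writing $\pi\colon\mathcal{L}_a\to\mathcal{L}$ for the map forgetting the translation, $d\pi$ restricts to an isomorphism of $T_\Lambda\mathcal{L}_a^{(\xi_0)}$ onto $T_{\pi(\Lambda)}\mathcal{L}$, so $T_\Lambda\mathcal{L}_a^{(\xi_0)}$ is a $3$-dimensional subspace surjecting onto the $\mathfrak{sl}_2$-part of $\Lie(G_a)=\mathfrak{sl}_2\ltimes\mathbb{R}^2$; a direct tangent computation (using $\xi_0\neq 0$) shows it meets the flow direction $\Lie(F)$ trivially and is contained in neither the weak-stable nor the weak-unstable subspace of the $\{b_t\}$-action, which is exactly the nondegeneracy needed to play the game on this submanifold.

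Second, I would reduce to one target manifold at a time. As in \cite{KW, AGK}, the set in \eqref{equ:main_dyn_fix_shift} is to be realized as a winning set for an appropriate Schmidt-type game played on $\mathcal{L}_a^{(\xi_0)}$; since winning sets are thick and the winning property is preserved under countable intersections, it suffices to prove separately that (a) $\{\Lambda\in\mathcal{L}_a^{(\xi_0)}: F\Lambda\text{ is bounded in }\mathcal{L}_a\}$ is winning, and (b) for a single $(F,H^+)$- and $(F,H^-)$-transversal submanifold $M_i\subseteq\mathcal{L}_a$, the set $\{\Lambda\in\mathcal{L}_a^{(\xi_0)}:\overline{F\Lambda}\cap M_i=\emptyset\}$ is winning. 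For (a), the map $\pi$ is proper with compact torus fibres, so $F\Lambda$ is bounded in $\mathcal{L}_a$ if and only if $F_0\,\pi(\Lambda)$ is bounded in $\mathcal{L}$; the set of $g\Gamma\in\mathcal{L}$ with $F_0(g\mathbb{Z}^2)$ bounded is winning by the Schmidt-game form of Kleinbock--Margulis \cite{KM}, and this winning property is transported to $\mathcal{L}_a^{(\xi_0)}$ through $\varphi$.

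Third --- the core of the argument --- for (b) I would feed the data $\bigl(\mathcal{L}_a,\ \mathcal{L}_a^{(\xi_0)},\ F=\{b_t\},\ M_i\bigr)$ into the framework of \cite{AGK}: their results yield winning sets of points with $M_i$-avoiding orbits once the ambient submanifold is nondegenerate with respect to the flow (checked in the first step) and the target $M_i$ is transversal to the flow together with \emph{both} of its horospherical directions. That last requirement is precisely the standing hypothesis, verified for the model manifolds $M_v$ in Lemma \ref{lem:M_v_trans}, that each $M_i$ is $(F,H^+)$- and $(F,H^-)$-transversal: since $\{b_t\}$ is a two-sided geodesic-type flow, one must be able to make an admissible push off $M_i$ both to kill accumulation of $b_t\Lambda$ as $t\to+\infty$ (handled by transversality with the expanding horosphere $H^+$) and as $t\to-\infty$ (handled by transversality with the contracting horosphere $H^-$). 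Granting the applicability of \cite{AGK} in this submanifold setting, the set \eqref{equ:main_dyn_fix_shift} is the intersection of the winning set from (a) with the countably many winning sets from (b) over the manifolds composing $M$, hence winning, hence thick in $\mathcal{L}_a^{(\xi_0)}\cong\mathcal{L}$.

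The step I expect to be the main obstacle is verifying that the two-sided transversality of the $M_i$ inside the $5$-dimensional $\mathcal{L}_a$ matches the hypotheses of \cite{AGK} \emph{after} the base point is confined to the $3$-dimensional slice $\mathcal{L}_a^{(\xi_0)}$. The subtlety is that this slice is not $F$-invariant: for $\Lambda=g\mathbb{Z}^2+\xi_0$ one has $b_t\Lambda=b_tg\mathbb{Z}^2+b_t\xi_0$, whose translation part $b_t\xi_0$ has coordinates $e^{t}$ and $e^{-t}$ times those of $\xi_0$ and thus drifts off to infinity in $\mathbb{R}^2$, so the orbit leaves $\mathcal{L}_a^{(\xi_0)}$ at once and the problem is genuinely not the homogeneous avoidance problem on $\mathcal{L}\cong\mathcal{L}_a^{(\xi_0)}$; in particular it does \emph{not} follow formally from Theorem \ref{thm:main_vary_both_dyn}, since thickness in $\mathcal{L}_a$ need not descend to a submanifold. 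One therefore has to control how this drift interacts with the $M_i$ and confirm that the relevant winning moves stay uniformly available along the orbit --- which is exactly where transversality with respect to \emph{both} horospherical subgroups, rather than one, is indispensable, and where the general machinery of \cite{AGK} does the work that a naive adaptation of \cite{KW} would not.
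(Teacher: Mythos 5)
Your proposal is correct and takes essentially the same route as the paper: the paper proves this theorem precisely by viewing $\mathcal{L}_a^{(\xi_0)}$ as the orbit of the conjugated subgroup $(I,\xi_0)(\SL(2,\mathbb R)\ltimes\{0\})(I,\xi_0)^{-1}$ and invoking \cite[Theorem 2.8]{AGK} with $G=G_a$, $\Gamma=\Gamma_a$, $X=\mathcal{L}_a$, $F=F_0\ltimes\{0\}$, $Z=M$ and that choice of $H$, and your surrounding reductions (boundedness via \cite{KM}, countable intersections of winning sets, thickness of winning sets) match the paper's treatment of the companion theorem. The only quibble is that your nondegeneracy claim that $T_\Lambda\mathcal{L}_a^{(\xi_0)}$ meets $\Lie(F)$ trivially uses $\xi_0\notin\mathbb Z^2$, which the statement does not assume, but this is immaterial since the An--Guan--Kleinbock result is applied with the subgroup $H$ itself rather than with that property.
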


Since the space of affine unimodular lattices is a torus bundle over the space of unimodular lattices, Theorem \ref{thm:main_fix_shift_dyn} together with Marstrand’s slicing theorem (\cite{Mars} or \cite[Theorem 5.8]{F}) implies Theorem \ref{thm:main_vary_both_dyn}. We again note that both Theorem \ref{thm:main_vary_both_dyn} and Theorem \ref{thm:main_fix_shift_dyn} follow from more general result \cite[Theorem 2.8]{AGK} by An, Guan, and Kleinbock (apply Theorem 2.8 of \cite{AGK} for $G=G_a,\,\, \Gamma=\Gamma_a,\,\,X=\mathcal{L}_a, \,\,F=F_{0} \ltimes \{0\}, \,\,Z=M,\,\, \text{and}\,\, H=G_a$ to get Theorem \ref{thm:main_vary_both_dyn}. For deducing Theorem \ref{thm:main_fix_shift_dyn} just change the $H$ to $H=(I,\xi_0)(\SL(2,\mathbb R) \ltimes \{0\})(I,\xi_0)^{-1}$ in the previous case). Hence, we don't claim the originality of these two dynamical theorems. Now, we will see how Theorem  \ref{thm:main_vary_both} and \ref{thm:main_vary_form}  follow from Theorem \ref{thm:main_vary_both_dyn} and \ref{thm:main_fix_shift_dyn}     respectively. Let's work out the reduction for Theorem \ref{thm:main_vary_both}. Proof of the other reduction is analogous.
\begin{proof}[Proof of Theorem \ref{thm:main_vary_both} modulo Theorem \ref{thm:main_vary_both_dyn}] Given any $b \in B,$ consider $v=v(b)$ as in \eqref{eqn:v} and take $M = \bigcup_{b \in B}M_{v(b)}.$ Then by Lemma \ref{lem:M_v_trans}, $M$ is a countable union of $(F,H^+)$- and $(F,H^-)$-transversal submanifolds of $\mathcal{L}_a$. Hence, Theorem \ref{thm:main_vary_both_dyn} implies that the set \eqref{equ:main_dyn_vary_both} is thick. Now Proposition \ref{prop:Q_dyn} implies that any $\Lambda $ belong to \eqref{equ:main_dyn_vary_both} satisfies $\overline{Q_0(\Lambda)} \cap B =\emptyset.$ Therefore, Theorem \ref{thm:main_vary_both} follows. 

\end{proof}
We finish this section with the remark that all these theorems are proved using the technique of Schmidt games. To illustrate the method, we present the proof of Theorem \ref{thm:main_vary_both_dyn}. 
The proof follows the same line of argument as that of An, Guan, and Kleinbock (\cite{AGK}), 
and Kleinbock and Weiss~\cite{KW}. However, we hope that this exposition will be of interest to the reader.

\section{Schmidt Games}
We use some variation of the Schmidt game to prove our results. To begin, recall Schmidt's $(\alpha,\beta)$-game from \cite{S}. The game is played between two players, Alice and Bob, on a complete metric space 
$(Y,d)$. The game involves a target set $S \subseteq Y$ and two parameters $\alpha, \beta \in (0,1).$ Bob begins the $(\alpha, \beta)$-game by selecting a point $ y_1 \in Y $ and a radius $ r_1 > 0 $, defining the closed ball $ B_1 = \overline{B}(y_1, r_1) $, where:
\[
\overline{B}(y,r) = \{z \in Y : d(z, y) \leq r\}.
\]

\noindent The game proceeds with Alice and Bob alternately choosing points $ y_i' $ and $ y_{i+1} $, subject to the constraints:
\[
d(y_i, y_i') \leq (1 - \alpha)r_i, \quad d(y_{i+1}', y_i) \leq (1 - \beta)r_i',
\]
where:
\[
r_i' = \alpha r_i, \quad r_{i+1} = \beta r_i'.
\]

\noindent This process ensures that the closed balls
\[
A_i = \overline{B}(y_i', r_i'), \quad B_{i+1} = \overline{B}(y_{i+1}, r_{i+1}),
\]
are nested, i.e.,
\[
B_1 \supset A_1 \supset B_2 \supset \cdots.
\]

The target set $ S \subseteq Y $ is said to be \emph{$\alpha$-winning} if for any $ \beta > 0 $, Alice has a strategy in the $(\alpha, \beta)$-game that guarantees the unique point of intersection
\[
\bigcap_{i=1}^\infty B_i = \bigcap_{i=1}^\infty A_i,
\]
belongs to $ S $, regardless of Bob's choices. The set $ S $ is called \emph{winning} if it is $\alpha$-winning for some $\alpha \in (0, 1)$.\\

Inspired by ideas from McMullen (\cite{M}), in \cite{BFKRW}, the \emph{absolute hyperplane game} was introduced as a modification played on $\mathbb{R}^n $. Let $S \subseteq \mathbb{R}^n$ be the target set, and let $\beta \in \left( 0, \frac{1}{3} \right) $.

\noindent The game proceeds as follows:  

\begin{itemize}
    \item  Bob begins by selecting a closed ball $B_1$ of radius $r_1$.  
    \item  Alice and Bob then alternate turns.  
    \item On each turn, Bob chooses a closed ball $B_{i+1}$ of radius $r_{i+1}$, ensuring:  
     \[
    r_{i+1} \geq \beta r_i.
    \]
    \item Alice chooses sets $A_i$, which are $r_i'$-neighborhoods of affine hyperplanes, where:  
    \[
    r_i' \leq \beta r_i.
    \]
    \item Additionally, Bob must ensure that his chosen ball satisfies the condition:  
    \[
B_{i+1} \subset B_i \setminus A_i.
\]
\end{itemize}

\noindent A set $S\subset \mathbb{R}^d$ is called \emph{$\beta$-HAW} (HAW stands for \emph{hyperplane absolute winning}) if Alice has a strategy that ensures:  
\[
\bigcap_{i=1}^\infty B_i \cap S \neq \emptyset,
\]  
regardless of Bob's choices. The set $S$ is called \emph{HAW} if it is $\beta$-HAW for all $0 < \beta < \frac{1}{3}$. It is straightforward to observe that $\beta$-HAW implies $\beta'$-HAW whenever $\beta \leq \beta' < \frac{1}{3}$. Thus, a set $S$ is HAW if and only if it is $\beta$-HAW for arbitrarily small positive values of $\beta$.

 The following proposition outlines key properties of winning and HAW subsets of $\mathbb{R}^n $:  

\begin{proposition}\label{prop:winning_HAW_properties}  
\begin{enumerate}[label=(\roman*)] 
    \item Winning sets are thick.  
    \item The HAW property implies the winning property.  
    \item The countable intersection of $\alpha$-winning (respectively, HAW) sets is again $\alpha$-winning (respectively, HAW).  
    \item The image of a HAW set under a $ C^1 $-diffeomorphism $ \mathbb{R}^d \to \mathbb{R}^d $ is HAW. 
    
\end{enumerate}  
\end{proposition}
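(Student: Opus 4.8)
All four assertions are classical: (i) together with the $\alpha$-winning half of (iii) go back to Schmidt \cite{S}, while (ii), (iv) and the HAW half of (iii) are established in \cite{BFKRW} building on McMullen \cite{M}. My plan is to recall the standard arguments, the only genuinely delicate point being (iv), so in an actual write-up these citations would do most of the work. For (i), I would first record the \emph{restriction property}: since Bob may take his opening ball $B_1$ to be any prescribed closed ball $B$, an $\alpha$-winning set $S$ restricts to an $\alpha$-winning subset of $B$; hence it suffices to bound the Hausdorff dimension of an $\alpha$-winning set below by $n$. Fixing $\beta$ small and a winning strategy for Alice, Bob builds a Cantor set by induction: inside the current ball of radius $r$, after Alice deletes a ball of radius $\alpha r$, Bob can fit $\gg \beta^{-(n-1)}$ pairwise disjoint balls of radius $\alpha\beta r$, each of which is a legal continuation; the limiting Cantor set lies in $S$ and carries a natural mass distribution satisfying a Frostman bound whose exponent tends to $n$ as $\beta \to 0$.

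For (iii) I would use the self-similar structure of these games: a tail of a play (and, more carefully, a play read along a suitably chosen infinite subsequence of rounds) is again a legal play of a game of the same kind, with possibly smaller parameter, and in the absolute case a smaller parameter is only a stronger winning hypothesis; so Alice may interleave her winning strategies, devoting an infinite set of rounds to each $S_k$, and the unique common point then lies in every $S_k$. For (ii) the geometric input is that for a ball $B(x,r)$ and the $r'$-neighbourhood $A$ of an affine hyperplane with $r' \le \beta r$, the complement $B(x,r)\setminus A$ contains a ball of radius $\ge (\tfrac12 - \beta)r$. Choosing $\alpha$ small (e.g. $\alpha = \tfrac14$) and, for each $\beta_0$ Bob may pick in Schmidt's $(\alpha,\beta_0)$-game, an auxiliary small parameter $\beta' \le \alpha\beta_0$, one converts an Alice strategy in the $\beta'$-absolute game into an Alice strategy in the $(\alpha,\beta_0)$-game: each time absolute-Alice would forbid a hyperplane neighbourhood, Schmidt-Alice moves into the ball furnished above, while Schmidt-Bob's moves are legal absolute-Bob moves because consecutive radii contract by $\alpha\beta_0 \ge \beta'$ and each new ball lies in the previous one minus the forbidden neighbourhood. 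The two nested sequences of balls share the same intersection point, which therefore lies in $S$, so $S$ is $\tfrac14$-winning.

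The substantial item is (iv), and this is where I expect the main obstacle to lie. Let $f\colon \mathbb R^d \to \mathbb R^d$ be a $C^1$-diffeomorphism and $S$ HAW; it suffices to show $f(S)$ is $\beta$-HAW for all small $\beta$. Let Bob play the $\beta$-absolute game targeting $f(S)$, producing nested balls $\tilde B_1 \supset \tilde B_2 \supset \cdots$ shrinking to a point, and let Alice transport the play back through $f^{-1}$. On the scale of $\tilde B_i$, with centre $\tilde p_i = f(p_i)$, the map $f^{-1}$ differs from the affine map $x \mapsto p_i + Df^{-1}(\tilde p_i)(x - \tilde p_i)$ by a relative error $\varepsilon_i \to 0$; consequently $f^{-1}(\tilde B_i)$ lies between two concentric balls of radius ratio $1 + O(\varepsilon_i)$, so Alice can choose inside it a ball $B_i^-$ of nearly the affine radius, and — because a $C^1$ hypersurface lies within $o(\mathrm{diam})$ of its tangent plane — the $f$-image, and the $f^{-1}$-preimage, of an $r'$-neighbourhood of a hyperplane is contained at these scales in an $O(r')$-neighbourhood of a hyperplane. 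Alice feeds $\{B_i^-\}$ into her HAW strategy for $S$; each hyperplane-neighbourhood move $A_i$ she receives is pushed forward to $f(A_i)$, which sits inside a hyperplane neighbourhood of comparable radius and is hence a legal Alice move against $\tilde B_i$, once $\beta$ has been chosen small enough to absorb the distortion constants (uniformly bounded on the relevant compact set). Since $\prod_i (1 + O(\varepsilon_i))$ converges, the two plays have the same limit point $z$, and $f^{-1}(z) \in S$ gives $z \in f(S)$. The hard part is exactly this matching of scales: one must control the distortion uniformly as the balls shrink while keeping the parameter needed on the $S$-side in the legal range — which is precisely why the game is formulated so that $\beta$ may be taken arbitrarily small, and why the \emph{hyperplane absolute} winning property, rather than ordinary winning, is the one preserved under $C^1$ changes of coordinates. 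I would present (iv) following \cite{BFKRW} (cf. its use in \cite{KW}).
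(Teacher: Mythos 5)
The paper offers no argument of its own for this proposition---it simply points the reader to \cite{S}, \cite{M}, and \cite{BFKRW}---and your proposal takes essentially the same route, deferring to those references and sketching the classical arguments they contain. One small correction to your sketch of (i): in Schmidt's $(\alpha,\beta)$-game Alice picks a ball $A_i\subset B_i$ of radius $\alpha r_i$ rather than deleting one, and the branching count for Bob should be $\asymp\beta^{-n}$ pairwise disjoint balls of radius $\alpha\beta r_i$ inside $A_i$ (the bound $\gg\beta^{-(n-1)}$ you state would only push the Frostman exponent toward $n-1$ as $\beta\to 0$, not toward $n$).
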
  
\noindent For detailed proofs, see \cite{S, M, BFKRW}. 

For our purpose, we want to play variants of these games on differentiable manifolds. Following \cite{KW}, we recall the absolute hyperplane game on $C^1$ manifolds.
\begin{itemize}
    \item We start with defining the absolute hyperplane game on an open subset $O $ of $\mathbb R^n.$ This is defined in exactly the same manner except we demand that the first choice $B_1$ of Bob should contained inside $O.$
    \item For a $n$ dimensional $C^1$ manifold $Y$ with a system $\{(U_{\alpha},\phi_{\alpha})\}$ of coordinate charts giving a differentiable structure to $Y$, a subset $S$ of  $Y$ is called HAW on $Y$ if $\phi_{\alpha}(U_{\alpha} \cap S)$ is HAW
on $\phi_{\alpha}(U_{\alpha})$ for each $\alpha$. For more details on the absolute hyperplane game on $C^1$ manifolds, see \cite[\S 3]{KW}.
\end{itemize}
As mentioned earlier, we record a few properties of the absolute hyperplane game on $C^1$ manifolds in the following proposition (for proofs see \cite{KW,AGK}).
\begin{proposition}\label{prop:HAW_manifold_pro}  
\begin{enumerate}[label=(\roman*)] 
    \item HAW subsets of a $C^1$ manifold are thick.
    \item The countable intersection of HAW subsets of a $C^1$ manifold is also HAW.
    \item For a $C^1$ manifold $Y$, the image of a HAW subset of $Y$ under a $C^1$ diffeomorphism $Y \rightarrow Y$ is HAW.
  \item Let $f:Y_1 \rightarrow Y_2$ be a surjective $C^1$ submersion of $C^1$ manifolds. If $S \subset Y_1$ is a HAW set, then its image $f(S) \subset Y_2$ is also HAW.
\end{enumerate}
\end{proposition}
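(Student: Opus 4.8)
The plan is to reduce all four assertions to their Euclidean counterparts in Proposition \ref{prop:winning_HAW_properties}, using the defining atlas of $Y$ together with the fact that the hyperplane absolute game on an open subset of $\mathbb{R}^n$ localizes well and is invariant under $C^1$-diffeomorphisms. As a preliminary I would record the consistency statement that ``$S$ is HAW on $Y$'' is independent of the chosen atlas: any two $C^1$-compatible atlases are related by transition maps which are $C^1$-diffeomorphisms between open subsets of $\mathbb{R}^n$, and since (a) the game on an open set $O\subseteq\mathbb{R}^n$ may be played inside any smaller open set, and a set is HAW on $O$ if and only if it is HAW on each member of an open cover of $O$ (if $\bigcap_iB_i=\{p\}$ and $p\in O_{k_0}$, the tail of the game takes place inside $O_{k_0}$), and (b) HAW is preserved by $C^1$-diffeomorphisms of open subsets of $\mathbb{R}^n$ (the affine-hyperplane version of Proposition \ref{prop:winning_HAW_properties}(iv), as in \cite{BFKRW,KW}), the chart-wise definition transports between atlases. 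With this in hand, items (i)--(iii) are essentially bookkeeping.

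For (i): fix a nonempty open $W\subseteq Y$ and a chart $(U_\alpha,\phi_\alpha)$ meeting $W$. By definition $\phi_\alpha(S\cap W\cap U_\alpha)$ is HAW in the open set $\phi_\alpha(W\cap U_\alpha)\subseteq\mathbb{R}^n$, hence winning, hence of full Hausdorff dimension there by Proposition \ref{prop:winning_HAW_properties}(i)--(ii); since $\phi_\alpha$ is a $C^1$-diffeomorphism onto its image it is locally bi-Lipschitz and preserves Hausdorff dimension, so $\dim_H(S\cap W)=\dim Y$. As $W$ was arbitrary, $S$ is thick. For (ii): in each chart, $\phi_\alpha(U_\alpha\cap\bigcap_k S_k)=\bigcap_k\phi_\alpha(U_\alpha\cap S_k)$ is a countable intersection of HAW subsets of $\phi_\alpha(U_\alpha)$, hence HAW by Proposition \ref{prop:winning_HAW_properties}(iii) (the interleaving of countably many strategies at a geometric sequence of scales, \cite{S,BFKRW}, works verbatim inside an open subset of $\mathbb{R}^n$). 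For (iii): given a $C^1$-diffeomorphism $g:Y\to Y$ and a chart $(U_\alpha,\phi_\alpha)$, cover $g^{-1}(U_\alpha)$ by charts $(U_\beta,\phi_\beta)$; then $\phi_\alpha\circ g\circ\phi_\beta^{-1}$ is a $C^1$-diffeomorphism between open subsets of $\mathbb{R}^n$ carrying the HAW set $\phi_\beta(U_\beta\cap S)$ to $\phi_\alpha(g(U_\beta)\cap U_\alpha\cap g(S))$, which is therefore HAW; patching over the cover (via the locality of the preliminary step) shows $\phi_\alpha(U_\alpha\cap g(S))$ is HAW.

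Item (iv) is the substantive one. By the submersion normal form, around every point of $Y_1$ there are charts in which $f$ becomes the linear projection $\pi:\mathbb{R}^{n_1}=\mathbb{R}^{n_2}\times\mathbb{R}^{n_1-n_2}\to\mathbb{R}^{n_2}$, so it suffices to show that if $S\subseteq\mathbb{R}^{n_2}\times\mathbb{R}^{k}$ is HAW then $\pi(S)$ is HAW in $\mathbb{R}^{n_2}$. The idea is a simulation: downstairs Alice, playing the hyperplane absolute game for $\pi(S)$, internally runs the upstairs winning strategy for $S$. When Bob plays a downstairs ball $B^d_i$, the simulated upstairs-Bob plays a ball $B^u_i$ of comparable radius centered over the center of $B^d_i$ and contained in the previous upstairs region. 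The upstairs strategy returns a neighborhood $A^u_i$ of an affine hyperplane $H_i\subseteq\mathbb{R}^{n_1}$ of radius $\leq\beta\,\mathrm{rad}(B^u_i)$; downstairs Alice then deletes a neighborhood $A^d_i$ of an affine hyperplane in $\mathbb{R}^{n_2}$ chosen so that $\pi^{-1}(A^d_i)\supseteq A^u_i\cap B^u_i$, which is possible with $\mathrm{rad}(A^d_i)\leq C\beta\,\mathrm{rad}(B^d_i)$ because, restricted to the ball $B^u_i$, the slab $A^u_i$ is contained in the $C\beta\,\mathrm{rad}(B^u_i)$-neighborhood of the ``vertical'' hyperplane $\pi^{-1}(\pi(H_i\cap B^u_i))$. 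Whatever ball $B^d_{i+1}\subseteq B^d_i\setminus A^d_i$ Bob then plays, one can choose $B^u_{i+1}\subseteq B^u_i\setminus A^u_i$ with $\pi(B^u_{i+1})\subseteq B^d_{i+1}$ and radius $\geq\beta'\,\mathrm{rad}(B^u_i)$, since the cylinder $\pi^{-1}(B^d_{i+1})$ intersected with the ball $B^u_i$ still contains a ball of definite size lying off the single codimension-one slab $A^u_i$ (this is where $\beta<\tfrac13$ is used). The nested intersection $\bigcap_i B^u_i$ is the point guaranteed by the upstairs strategy to lie in $S$, and $\bigcap_i B^d_i=\pi(\bigcap_i B^u_i)\in\pi(S)$, so downstairs Alice wins. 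The main obstacle is exactly this: carrying out the two-sided simulation so that every move of Bob and of Alice is legal in both games simultaneously — in particular, controlling the eccentricity of the regions (one should work throughout with balls of bounded aspect ratio rather than genuine cylinders), fixing the admissible radii $\beta,\beta'$, and quantifying the ``verticalization'' of Alice's hyperplanes over a small ball. Items (i)--(iii) are routine once the Euclidean statements of Proposition \ref{prop:winning_HAW_properties} and the locality and $C^1$-invariance of the absolute hyperplane game on open subsets of $\mathbb{R}^n$ are granted.
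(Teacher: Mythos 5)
First, a point of reference: the paper does not prove this proposition at all — it records the four properties and cites \cite{KW} and \cite{AGK} for the proofs — so your attempt is being measured against the arguments in those references rather than against anything in the text. Your reductions for (i)--(iii) (locality of the absolute hyperplane game on open subsets of $\mathbb{R}^n$, invariance under $C^1$ diffeomorphisms of open sets, chart-by-chart bookkeeping, bi-Lipschitz invariance of Hausdorff dimension) are the standard route and are essentially fine, up to one small imprecision: in the hyperplane absolute game Bob's radii need not tend to $0$, so ``the tail of the game takes place inside $O_{k_0}$'' is not automatic; one handles this by the usual remark (as in the paper's Remark at the end of its Schmidt games section) that if the radii do not shrink then $\bigcap_i B_i$ has nonempty interior and density of the target saves Alice.

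The genuine gap is in (iv). Your simulation hinges on the claim that, inside $B^u_i$, the slab $A^u_i=H_i^{(\varepsilon)}$ is contained in a $C\beta\,\mathrm{rad}(B^u_i)$-neighborhood of the ``vertical'' hyperplane $\pi^{-1}(\pi(H_i\cap B^u_i))$, so that a legal downstairs move $A^d_i$ with $\pi^{-1}(A^d_i)\supseteq A^u_i\cap B^u_i$ always exists. This is false unless $H_i$ is (nearly) vertical, i.e.\ unless its unit normal $\nu=(\nu_b,\nu_f)$ has small fiber component. Take $\pi:\mathbb{R}^2\to\mathbb{R}$ and $H_i=\{y=c\}$ passing near the center of $B^u_i$: then $\pi(H_i\cap B^u_i)$ is not a hyperplane downstairs but essentially the whole interval $\pi(B^u_i)$, and any $A^d_i$ with $\pi^{-1}(A^d_i)\supseteq A^u_i\cap B^u_i$ would have to swallow almost all of $B^d_i$ — not a legal move, and it would leave Bob nowhere to go. The missing idea (which is how the argument in \cite{KW,AGK} runs) is a case distinction on the orientation of $H_i$: if $|\nu_f|$ is below a threshold, the slab does project into a downstairs slab, but of width roughly $(\varepsilon+|\nu_f|\,\mathrm{rad}(B^u_i))/|\nu_b|$, so the threshold and the two game parameters must be calibrated (play the upstairs game with parameter much smaller than the downstairs one, which is permissible since HAW means winning for arbitrarily small $\beta$); if $|\nu_f|$ is above the threshold, no downstairs move is made at all — instead you, acting as the simulated upstairs Bob, use your freedom in the fiber coordinates to translate $B^u_{i+1}$ in the fiber direction past the slab, which is possible precisely because the normal has a definite fiber component. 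Without this dichotomy (together with the aspect-ratio/radius bookkeeping you allude to, e.g.\ keeping $\mathrm{rad}(B^u_i)$ a fixed multiple of $\mathrm{rad}(B^d_i)$ so that a ball of radius $\geq\beta'\mathrm{rad}(B^u_i)$ fits over any legal $B^d_{i+1}$, and the same radii-not-tending-to-zero caveat as above), the strategy you describe breaks down on the very first nearly-horizontal hyperplane the upstairs strategy returns.
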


\subsection{Hyperplane percentage game} For our purpose, we need the following variant of Schmidt games. Fix a parameter $\beta>0.$ The hyperplane percentage game on $\mathbb R^n$ with target set $S$ is played as follows:
\begin{itemize}
    \item As earlier, Bob begins by choosing a $B_1 \subseteq \mathbb R^n.$
    \item After Bob chooses a ball $B_i$
  with radius $r_i$ , Alice selects finitely many affine hyperplanes $\mathcal H_{i,j}$
  and corresponding values $\varepsilon_{i,j}$
 , where $j=1,\dots,N_i$. The parameters satisfy $0<\varepsilon_{i,j} \leq \beta r_i$ and $N_i$ is any positive integer chosen by Alice.
 \item Bob then selects a ball $B_{i+1}  \subset B_i$ with radius $r_{i+1} \geq \beta r_i$, ensuring that
\begin{equation*}
    B_{i+1} \cap \mathcal{H}_{i,j}^{(\varepsilon_{i,j})}=\emptyset
\end{equation*}
for at least $\frac{N_i}{2}$ values of $j.$
\item This process generates a nested sequence of closed balls $B_1 \subset B_2 \subset \dots$, and Alice is declared the winner if and only if
\begin{equation*}
    \bigcap_{i=1}^{\infty} B_i \cap S \neq \emptyset
\end{equation*}
If Alice has a winning strategy regardless of Bob's moves, the set $S$ is called $\beta$-hyperplane percentage winning (or
$\beta$-HPW).
\item For sufficiently large values of $\beta$, Alice may force Bob into a position where he cannot make a valid move after finitely many turns. However, one can show that (see \cite[Lemma 2]{Mo} or \cite[\S 2]{BFK}) shows that Bob always has valid moves when 
$\beta$ is smaller than a certain constant $\beta_0(n)$. For example $\beta_0(1)=1/5.$
\item If $S$ is $\beta$-HPW for all $0 <\beta < \beta_0(n)$, it is said to be hyperplane percentage winning (HPW). Moreover, it is clear that $\beta$-HPW implies $\beta'$-HPW whenever 
$\beta \leq \beta'$, and thus HPW is equivalent to $\beta$-HPW for arbitrarily small values of 
$\beta$.
\end{itemize}
 The hyperplane percentage game is inherently more advantageous for Alice compared to the hyperplane absolute game. Consequently, every hyperplane absolute winning (HAW) set is also a hyperplane percentage winning (HPW) set. Remarkably, the reverse implication holds as well (for details, see \cite[Lemma 2.1]{BFS}).
 \begin{lemma}
     Given any $0 < \beta < 1/3,$ $\exists$ $\beta' \in (0,\beta_0(n)) $ such that any set which is $\beta'$-HPW is $\beta$-HAW. In other words, the HAW and HPW are equivalent properties.
 \end{lemma}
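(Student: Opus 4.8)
The plan is to prove the non-trivial implication, namely that if $S$ is $\beta'$-HPW for a suitably small $\beta'$ then $S$ is $\beta$-HAW; the converse is immediate, since in the hyperplane percentage game Alice may always declare $N_i=1$, and then the rule ``$B_{i+1}$ avoids at least $N_i/2$ of the $\mathcal{H}_{i,j}^{(\varepsilon_{i,j})}$'' becomes exactly ``$B_{i+1}$ avoids the single hyperplane neighbourhood Alice named'', i.e. precisely the hyperplane absolute game. Combining the two implications with the already-recorded facts that a set is HAW (resp.\ HPW) if and only if it is $\beta$-HAW (resp.\ $\beta$-HPW) for arbitrarily small $\beta$ then yields the stated equivalence of the two notions.

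For the main implication, fix $\beta\in(0,1/3)$ and suppose Alice has a winning strategy $\sigma$ in the $\beta'$-hyperplane percentage game with target $S$, where $\beta'<\beta_0(n)$ is to be chosen small in terms of $\beta$. I would have Alice win the $\beta$-hyperplane absolute game by \emph{emulating} a single run of the $\beta'$-percentage game inside it. Whenever Bob has just produced an absolute-game ball $B$ of radius $r$, Alice treats $B$ as a fresh percentage-game ball, consults $\sigma$, and obtains finitely many hyperplane neighbourhoods $\mathcal{H}_1^{(\varepsilon_1)},\dots,\mathcal{H}_N^{(\varepsilon_N)}$ with $\varepsilon_j\le\beta' r$. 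She then feeds these to Bob one at a time, over $N$ consecutive rounds of the absolute game, playing $\mathcal{H}_j^{(\varepsilon_j)}$ in the $j$-th of them; the absolute rules force Bob's ball into the complement of $\mathcal{H}_j^{(\varepsilon_j)}$ at that round. Since absolute-game balls are nested, the ball $B'$ reached at the end of these $N$ rounds avoids \emph{all} of $\mathcal{H}_1^{(\varepsilon_1)},\dots,\mathcal{H}_N^{(\varepsilon_N)}$, in particular at least $N/2$ of them, so $B'$ is a legal percentage-game response, which Alice hands back to $\sigma$. Iterating, the absolute-game ball sequence refines the emulated percentage-game ball sequence, hence the two share the same nested intersection; since $\sigma$ is winning, that intersection meets $S$, so Alice wins the absolute game. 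As $\beta\in(0,1/3)$ was arbitrary, $S$ is HAW.

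The step needing care — and the reason $\beta'$ must be small — is the bookkeeping of radii, to guarantee the emulation never calls for an illegal move. In an absolute round Bob may shrink his ball by a factor as small as $\beta$, so after $j$ of the $N$ rounds emulating a single percentage move the current ball may have radius only $\beta^{j}r$, whereas Alice still owes $\sigma$ a neighbourhood of radius up to $\beta' r$; legality in the absolute game requires this to be at most $\beta$ times the current radius, which amounts roughly to $\beta'\le\beta^{\,N}$ (the same inequality also makes $B'$ a radius-legal percentage response). Thus I would first invoke the reduction of \cite[Lemma 2.1]{BFS} replacing $\sigma$ by an equivalent $\beta'$-percentage strategy that uses a uniformly bounded number of hyperplanes per move — discarding the neighbourhoods disjoint from the current ball and grouping the rest — so that one fixed exponent governs everything, and then choose $\beta'$ accordingly; one also uses the standard fact that for parameters below $\beta_0(n)$ Bob always has a legal move (\cite[Lemma 2]{Mo}, \cite[\S 2]{BFK}), so the emulation cannot stall. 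The main obstacle is exactly this scale arithmetic together with controlling the number of hyperplanes per percentage move; once those are in hand the translation of Bob's forced avoidances between the two games, and the nesting argument for the intersection, are routine.
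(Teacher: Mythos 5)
Your easy direction is fine (taking $N_i=1$ turns the percentage game into the absolute game), and the shape of your hard direction --- emulating one percentage move by a block of consecutive absolute moves, with the radius arithmetic $\beta'\le\beta^{N}$ making both the absolute moves and the synthesized percentage response legal --- is the natural one. Note, for comparison, that the paper does not prove this lemma at all: it is quoted from \cite{BFS} (Lemma 2.1), so what you are attempting is a proof of that result from scratch.

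The genuine gap is exactly at the point you flag and then wave away: the number $N_i$ of hyperplane neighbourhoods that the percentage strategy $\sigma$ produces in a single move is not bounded, and your fix --- ``invoke the reduction of \cite[Lemma 2.1]{BFS} replacing $\sigma$ by an equivalent $\beta'$-percentage strategy that uses a uniformly bounded number of hyperplanes per move'' --- is circular, since the lemma you are citing is precisely the statement under proof; there is no such free-standing reduction to appeal to. Nor does your parenthetical recipe produce one: discarding the neighbourhoods disjoint from Bob's current ball is harmless but does not bound the count (arbitrarily many non-parallel hyperplane neighbourhoods of width $\le\beta' r$ can all meet a given ball), and ``grouping the rest'' has no meaning, as several such neighbourhoods cannot in general be covered by a single hyperplane neighbourhood of legal width. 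Without a bound on $N_i$ your emulation fails concretely: Bob may shrink by the factor $\beta$ at every absolute round, so Alice has only about $\log_{\beta}\beta'$ rounds before the radius drops below $\beta' r$ and the synthesized percentage move becomes radius-illegal, while she may still owe avoidance of at least $N_i/2$ neighbourhoods with $N_i$ as large as $\sigma$ likes. Even granting a strategy-dependent bound $N_i\le C$, the quantifiers in the lemma require $\beta'$ to depend only on $\beta$ and $n$, not on the set or the strategy, so choosing $\beta'\le\beta^{C}$ after seeing $\sigma$ does not prove the statement. Handling percentage moves involving arbitrarily many hyperplanes within a bounded block of absolute moves is the actual content of \cite[Lemma 2.1]{BFS}, and it requires an idea beyond the one-at-a-time emulation written here; as it stands, your argument establishes only the easy implication together with a correct but conditional bookkeeping scheme.
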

We end this section with the following remark.
\begin{remark}
    For proving a set $S$ to be HPW, without loss of generality, we can always assume that the radii $r_i \rightarrow 0$ as $i \rightarrow \infty.$ Indeed, if Alice has a winning strategy under the assumption that $r_i \rightarrow 0$, then the set $S$ must be dense. In that case, she automatically wins in any play where $r_i \not\to 0$, since the balls cannot avoid a dense set indefinitely. Furthermore, by allowing Alice to make a few dummy moves at the start (e.g., not removing anything substantial), and by relabelling the balls $B_i$, we may also assume that the initial radius $r_0$ is smaller than any given positive constant.
\end{remark}

\section{Reduction to discrete time action}
In this section, we reduce Theorem \ref{thm:main_vary_both_dyn} to discrete time action. To start with, we have the following easy reduction. Define $F^+:=\{(b_t,0)\in F: t \geq 0\}.$ Then Theorem \ref{thm:main_vary_both_dyn} can be reduced to the following:

\begin{theorem}\label{thm:main_vary_both_dyn_redu1}
\begin{enumerate}[label=(\roman*)]
\item The set
\begin{equation}\label{equ:1}
 \left\{\Lambda \in \mathcal{L}_a:F^+ \Lambda \,\, \text{is bounded in} \,\, \mathcal{L}_a \right\}
\end{equation}
is HAW.
\item Suppose that $M$ is a compact $(F,H^+)$-transversal submanifold of $\mathcal L_a.$ Then the set 
\begin{equation}\label{equ:2}
 \left\{\Lambda \in \mathcal{L}_a:\overline{F^+ \Lambda} \cap M=\emptyset \right\}
\end{equation}
is HAW.
 \end{enumerate}   
\end{theorem}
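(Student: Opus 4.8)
The proof naturally splits into the two HAW statements, each handled by exhibiting a winning strategy for Alice in the hyperplane percentage game on $\mathcal{L}_a$ (which is legitimate by the equivalence of HPW and HAW recorded above), working chart by chart since $\mathcal{L}_a$ is a $C^1$ manifold. For part (i), the bounded-orbit statement, I would follow the Kleinbock--Margulis scheme: cover $\mathcal{L}_a$ by a compact core together with a neighborhood of the cusp, and use the fact that the dynamics of $F^+$ near the cusp is controlled by a single exponential contraction/expansion in the $b_t$-coordinates. The key point is that the set of lattices whose forward $F^+$-orbit ever penetrates deep into the cusp can be trapped inside a bounded-codimension family of ``bad'' directions in each chart, on which Alice can then remove hyperplane neighborhoods at a controlled scale at each stage of the game. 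Concretely, one sets up the usual cusp-excursion estimate (via Mahler's criterion): if $b_t \Lambda$ leaves a large compact set at time $t$, then $\Lambda$ has a nonzero vector which is already nearly aligned with the $b_t$-contracting direction, and the set of such $\Lambda$ lies within a prescribed thickening of a hyperplane in any local chart; Alice removes precisely these. Summing over a geometric sequence of return times gives the HAW (equivalently HPW) conclusion. This part is essentially the affine-lattice analogue of the bounded-orbit HAW theorem and I expect it to go through with only bookkeeping changes from the $\SL(2,\mathbb{R})/\SL(2,\mathbb{Z})$ case.

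\textbf{Part (ii): avoiding a transversal manifold.} For the second statement, fix a compact $(F,H^+)$-transversal submanifold $M$, and consider a point $\Lambda$ whose forward orbit $\overline{F^+\Lambda}$ meets $M$. The strategy is local: whenever the current ball $B_i$ in the game corresponds (after applying the flow for an appropriate time $t_i$, chosen so that $b_{t_i}$ expands along $H^+$ by roughly $r_i^{-1}$) to a region where $b_{t_i}\Lambda$ is close to $M$, Alice uses the transversality hypothesis to produce a hyperplane to avoid. The role of condition \ref{item:cond_F} is that $M$ is genuinely transverse to the flow direction, so that ``being close to $M$ at some time'' is a codimension-$\geq 1$ condition in the transverse slice; the role of condition \ref{item:cond_F_H} is exactly what makes the $H^+$-direction an honest ``free'' direction in the chart, so that pulling back the local picture of $M$ under the flow-and-$H^+$ coordinates yields, in the limit of small scales, an affine hyperplane (plus a lower-order error) that Alice is allowed to delete in the hyperplane game. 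One runs this for a geometric sequence of scales $r_i$ with matching times $t_i\to\infty$; at each stage the contribution of $M$ near the relevant time window is, up to the $C^1$-distortion of the charts, a bounded number of hyperplane-neighborhoods, which is exactly the data Alice is permitted to remove (and the ``percentage'' version gives her the slack to absorb Bob's freedom). Taking the countable union over the pieces of $M$ and intersecting with the winning set from part (i) — legitimate since HAW is closed under countable intersection (Proposition \ref{prop:winning_HAW_properties}/\ref{prop:HAW_manifold_pro}) — yields Theorem \ref{thm:main_vary_both_dyn}.

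\textbf{The main obstacle.} The technical heart, and the step I expect to be most delicate, is the passage from the infinitesimal transversality conditions \ref{item:cond_F} and \ref{item:cond_F_H} to a \emph{quantitative, scale-uniform} statement: one needs that, uniformly over the compact manifold $M$ and over all times $t$ in the relevant windows, the image of a neighborhood of $M$ under the ``flow back by $t$, then read off the $H^+$-coordinate'' map is contained in a $c r_i$-neighborhood of finitely many affine hyperplanes, with $c$ independent of $i$ and of Bob's play. This requires controlling the $C^1$-distortion of the coordinate charts on $\mathcal{L}_a$ and the way the flow $b_t$ distorts the submanifold $M$ as $t\to\infty$ — precisely the kind of estimate that makes the hyperplane \emph{percentage} game (rather than the absolute game) the natural tool, since one only needs to kill at least half of the hyperplanes Alice names, which gives room to absorb the distortion errors. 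Once this uniform transversality estimate is in place, assembling the winning strategy and verifying that the nested balls shrink to a point of the target set is routine and follows the template of \cite{KW} and \cite{AGK} verbatim; I would cite those for the standard parts and spell out only the affine-lattice-specific computations (the structure of $M_v$-type manifolds and their Lie algebras, already recorded in Lemma \ref{lem:M_v_trans}).
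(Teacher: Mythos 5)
Your part (ii) plan has the right architecture (hyperplane percentage game, times matched to the scales $r_i$, hyperplanes produced from transversality), but as written it has two load-bearing gaps. First, continuous time: your strategy only controls the orbit at a chosen time $t_i$ (or a finite window of times) per ball, whereas the conclusion $\overline{F^+\Lambda}\cap M=\emptyset$ requires control at \emph{all} times. The paper deals with this by a preliminary reduction to the discrete semigroup $F^+_{\tau}$: condition \ref{item:cond_F} is used to thicken $M$ in the flow direction to the compact manifold $M_{[0,\tau]}$ (Lemma \ref{lem:M_sigma_manifold}), condition \ref{item:cond_F_H} guarantees that this thickened manifold is still $H^+$-transversal, and the bounded-distortion estimate \eqref{equ:distortion} shows that if the discrete orbit stays away from a neighbourhood of $M_{[0,\tau]}$ then the continuous orbit closure misses $M$. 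In your sketch condition \ref{item:cond_F} is only given the heuristic role of making ``close to $M$'' a codimension-one condition, and nothing accounts for the times between your $t_i$'s; without this discretization step the strategy does not give the stated set. Second, the ``main obstacle'' you flag — the scale-uniform statement that the pullback under $b_\tau^{-k}$ of a fixed neighbourhood of the ($H^+$-transversal) manifold, intersected with the current ball, lies in a hyperplane neighbourhood of thickness comparable to $\beta^{n+1}r$ — is precisely where the proof lives, and you leave it unresolved rather than proving it or citing it. In the paper it is imported wholesale as \cite[Lemma 4.6]{AGK}, applied with $f=b_\tau$ and $Z=M$ (after thickening); once that is in hand, the stage/window bookkeeping and the percentage-game counting are exactly as you describe.

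For part (i) you take a genuinely different and much heavier route: redoing the Kleinbock--Margulis cusp-excursion argument for affine lattices, which is itself only sketched (the claim that deep cusp excursions are trapped in hyperplane neighbourhoods in each chart is the whole content of that argument, not bookkeeping). The paper instead observes that $F^+\Lambda$ is bounded in $\mathcal{L}_a$ if and only if $\pi(F^+\Lambda)$ is bounded in $\mathcal{L}$, since the fibers of $\pi:\mathcal{L}_a\to\mathcal{L}$ are compact tori, and then invokes the known HAW statement for bounded $F^+$-orbits on $\mathcal{L}$; this two-line reduction is what you should use unless you have a specific reason to re-derive the cusp estimates in the affine setting.
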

\begin{proof}[Proof of Theorem \ref{thm:main_vary_both_dyn} modulo Theorem \ref{thm:main_vary_both_dyn_redu1}]
  First, it is easy to observe that similar statements are also true for the semigroup $F^- :=\{(b_t,0) \in F:t \leq 0\}$ instead of $F^+$ and with the roles of $H^+$ and $H^-$ exchanged. That is, the sets
  \begin{equation}\label{equ:3}
      \left\{\Lambda \in \mathcal{L}_a:F^- \Lambda \,\, \text{is bounded in} \,\, \mathcal{L}_a \right\}
  \end{equation}
  and
  \begin{equation}\label{equ:4}
    \left\{\Lambda \in \mathcal{L}_a:\overline{F^- \Lambda} \cap M=\emptyset \right\},
  \end{equation}
  where $M$ is a compact $(F,H^-)$-transversal submanifold of $\mathcal L_a$, are HAW. Now the set
  \begin{equation}\label{equ:5}
      \{ \Lambda \in \mathcal{L}_a:F\Lambda \,\, \text{is bounded in}\,\, \mathcal L_a \,\, \text{and}\,\, \,\,\overline{F\Lambda} \cap M =\emptyset \}
  \end{equation}
  is the intersection of sets described \eqref{equ:1}-\eqref{equ:4}. Therefore, by Proposition \ref{prop:HAW_manifold_pro}, \eqref{equ:5} is hyperplane absolute winning in $\mathcal{L}_a$, under the hypothesis that $M$ is a both $(F,H^+)$ and $(F,H^-)$-transversal submanifold of $\mathcal{L}_a.$

   Therefore, by Proposition \ref{prop:HAW_manifold_pro}, \eqref{equ:5} is HAW (hyperplane absolute winning) whenever $M \subset \mathcal{L}_a$ is compact and it is both $(F,H^+)$- and $(F,H^-)$-transversal. In Theorem \ref{thm:main_vary_both_dyn}, the set $M$ is given as a countable union of submanifolds. By refining if necessary, each such submanifold can be replaced by a countable union of compact manifolds with boundary. Hence, the set in equation \eqref{equ:main_dyn_fix_shift} becomes a countable intersection of sets of the form \eqref{equ:5}. Now  Theorem \ref{thm:main_vary_both_dyn} follows from Proposition \ref{prop:winning_HAW_properties}.
\end{proof}
\begin{proof}[Proof of Theorem \ref{thm:main_vary_both_dyn_redu1}(i)] Note that $F^+ \Lambda$ is bounded in $\mathcal{L}_a$ if and only if $\pi(F^+ \Lambda)$ is bounded in $\mathcal L,$ where $\pi:\mathcal{L}_a \rightarrow \mathcal{L}$ is the natrural projection map given by $\pi(g\mathbb Z^2+\xi)=g\mathbb Z^2$ for $g\mathbb Z^2 +\xi \in \mathcal{L}_a.$ 

We already know that the set $\left\{\Lambda \in \mathcal{L}:F^+ \Lambda \,\, \text{is bounded in} \,\, \mathcal{L}\right\}$   is HAW in $\mathcal L.$  Hence the set $\left\{\Lambda \in \mathcal{L}_a :F^+ \Lambda \,\, \text{is bounded in} \,\, \mathcal{L}_a \right\}$ is HAW in $ \mathcal{L}_a.$
\end{proof}
In view of the above, we only need to prove Theorem \ref{thm:main_vary_both_dyn_redu1}$(ii)$. To do that, we further reduce it to discrete time actions. We closely follow \cite[\S 4]{K} and \cite[\S 4]{KW}. For a parameter $\sigma>0,$ we define
\begin{equation}
    F_{\sigma}^{+}:=\{(b_{n\sigma},0):n \in \mathbb N \cup \{0\}\}.
\end{equation}
$F_{\sigma}^{+}$ is the cyclic subsemigroup of $F$ generated by $(b_{\sigma},0).$ We want to show that one can replace the continuous semigroup $F^+$ with $F^{+}_{\sigma}$ in Theorem \ref{thm:main_vary_both_dyn_redu1}$(ii)$. Before proceeding further, we recall the following definition:
\begin{definition}
    Suppose that $H$ is a subgroup of $G_a$ and $M$ is a smooth submanifold of $\mathcal{L}_a.$ We say that $M$ is $H$-transversal at $y \in M$ if $T_{y}(Hy)$ is not contained in $T_y M.$ Futhermore $M$ is said to be $H$-transversal if $M$ is $H$-transversal at every point of M.
\end{definition}
Also given $M \subseteq \mathcal{L}_a$ and $t_1,t_2 \in \mathbb R,$ we define the following:
\begin{equation}
  M_{[t_1,t_2]}:=\bigcup_{t_1 \leq t \leq t_2} b_tM. 
\end{equation}
Now we have the following lemma, whose proof is analogous to \cite[Lemma 4.1]{KW}. We give proof of it for the sake of completeness.
\begin{lemma}\label{lem:M_sigma_manifold}
 Let $M$ be a compact $C^1$ submanifold of $\mathcal{L}_a$ and $H$ be a connected subgroup of $G_a.$ Also suppose that $M$ is $(F,H)$-transversal. Then we have the following:
 \begin{enumerate}[label=(\roman*)]
\item There exists a $\delta=\delta(M)>0$ such that $M_{[-\delta,\delta]}$ is a $C^1$ manifold.
\item  There exists a $\tau=\tau(M) \leq \delta(M)$ such that $T_y(Hy)$ is not contained in $T_y(M_{[0,\tau]})$ for any $y \in M_{[0,\tau]}$, in other words $M_{[0,\tau]}$ is $H$-transversal.
 \end{enumerate}  
\end{lemma}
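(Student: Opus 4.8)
The plan is to treat the two parts in turn, using the $(F,H)$-transversality hypothesis — in particular condition \ref{item:cond_F}, which says $T_y(Fy) \not\subset T_y(M)$ at every $y \in M$ — to guarantee that flowing $M$ by the one-parameter group $\{b_t\}$ for a short time produces an honest $C^1$ manifold rather than a self-intersecting or singular set.

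For part (i), I would consider the map $\Phi \colon (-\delta,\delta) \times M \to \mathcal{L}_a$ given by $\Phi(t,y) = b_t y$. Since $M$ is compact and $C^1$ and the $F$-action is smooth, $\Phi$ is $C^1$. Condition \ref{item:cond_F} says exactly that at every point $(0,y)$ the differential $d\Phi$ is injective: the tangent direction $\frac{d}{dt}\big|_0 b_t y$ lies in $T_y(Fy)$, which by hypothesis is not contained in $T_y(M) = d\Phi(\{0\}\times T_yM)$, so the image of $d\Phi_{(0,y)}$ has dimension $\dim M + 1$, i.e.\ $d\Phi$ is injective along $\{0\}\times M$. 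By compactness of $M$ and continuity of derivatives, injectivity of $d\Phi$ persists on $(-\delta,\delta)\times M$ for $\delta$ small, so $\Phi$ is an immersion there; shrinking $\delta$ further (again using compactness of $M$, to rule out distinct points of $M$ being flowed onto each other in small time) makes $\Phi$ an injective immersion, hence a $C^1$ embedding onto $M_{[-\delta,\delta]}$, which is therefore a $C^1$ submanifold of dimension $\dim M + 1$.

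For part (ii), I want a $\tau \le \delta$ so that $M_{[0,\tau]}$ is $H$-transversal, i.e.\ $T_y(Hy) \not\subset T_y(M_{[0,\tau]})$ for all $y \in M_{[0,\tau]}$. At a point $y = b_t y_0$ with $y_0 \in M$, one has $T_y(M_{[0,\tau]}) = db_t\big(T_{y_0}(M) \oplus \mathbb{R}\cdot X(y_0)\big)$ where $X$ is the generator of $\{b_t\}$, i.e.\ $T_y(M_{[0,\tau]})$ is the $db_t$-image of the tangent space to $M_{[-\delta,\delta]}$ at $y_0$, which equals $T_{y_0}(M) \oplus T_{y_0}(Fy_0)$. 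Now condition \ref{item:cond_F_H} asserts precisely that $T_{y_0}(Hy_0) \not\subset T_{y_0}(M) \oplus T_{y_0}(Fy_0)$ at every $y_0 \in M$. Since $H$-orbit tangent spaces transform equivariantly under the $G_a$-action (conjugation of $H$ by $b_t$ moving the Lie algebra of $H$), one checks that the non-containment at $y_0 \in M$ propagates to $y = b_t y_0$ for $t$ in a neighbourhood of $0$; compactness of $M$ then yields a uniform $\tau \le \delta$ that works at every point of $M_{[0,\tau]}$. The main obstacle is this last propagation-and-uniformity step: one must verify that the transversality condition \ref{item:cond_F_H}, which is given only on $M$ itself, survives being flowed by $b_t$ — this requires expressing $T_y(Hy)$ for $y = b_t y_0$ in terms of $\Ad(b_t)$ applied to $\Lie(H)$, noting that $\Ad(b_t)\Lie(H)$ varies continuously and nontrivially with $t$ (the horospherical directions get expanded/contracted), and then using an open-condition argument together with the compactness of $M$ to extract a single $\tau$. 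The rest is routine differential topology of the sort carried out in \cite[Lemma 4.1]{KW}.
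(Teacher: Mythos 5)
Your proposal is correct and follows essentially the same route as the paper: part (i) is the observation that condition \ref{item:cond_F} makes the flow-out map $(t,y)\mapsto b_t y$ an immersion along $t=0$, upgraded to an embedding for small $\delta$ by compactness of $M$, and part (ii) uses $T_y(M_{[-\delta,\delta]})=T_yM\oplus T_y(Fy)$ at $y\in M$, condition \ref{item:cond_F_H}, and the openness of $H$-transversality plus compactness to extract a uniform $\tau$. The only cosmetic differences are that the paper phrases (i) in local parametrizations $\tilde f(w,t)=b_t f(w)$ rather than on $(-\delta,\delta)\times M$ globally, and phrases the propagation step in (ii) directly through continuity of the transversality condition (as with the function $\theta_H$) rather than through $\Ad(b_t)\Lie(H)$.
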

\begin{proof}
    Suppose that $\dim(M)=k.$ Since $M$ is compact $C^1$ submanifold of $\mathcal{L}_a,$ by covering $M$ with a finite number of suitable coordinate charts of $\mathcal{L}_a$, we can, without loss of generality, assume that $M$ is of the form $f(W)$ for some bounded open set $W \subset \mathbb{R}^k$, where $f$ is a $C^1$ nonsingular embedding defined on an open set $W' \subset \mathbb{R}^k$ that strictly contains $W$.  Now, define $\tilde{f}: W' \times \mathbb{R} \to \mathcal{L}_a$ by setting  
\[
\tilde{f}(w, t) = b_t(f(w)).
\]  
Since $M$ is $F$-transversal, it follows that $\tilde{f}$ is nonsingular at $t = 0$ for all $w \in W$. Consequently, $\tilde{f}$ serves as a nonsingular embedding of $W'' \times [-\sigma, \sigma]$ into $\mathcal{L}_a$ for some $\sigma > 0$ and an open set $W''$ strictly containing $W$, thus establishing (i). 

It is evident that the tangent space to $ M_{[-\sigma, \sigma]} $ at any point $ y \in M $ is given by $ T_y M \oplus T_y (Fy) $. Consequently, condition $ (H, F) $ ensures that $ M_{[-\sigma, \sigma]} $ is $ H $-transversal at every point of $ M $. Since $ H $-transversality is an open condition, it must also hold at any point of $ M_{[-\sigma, \sigma]} $ that is sufficiently close to $ M $. By compactness, one can choose a positive $ \tau \leq \sigma $ such that $ M_{[0, \tau]} $ remains $ H $-transversal.
\end{proof}

Here is an alternative way to express $H$-transversality: fix a Riemannian metric $\text{dist}$ on the tangent bundle of $\mathcal{L}_a $, and for a $ C^1 $ submanifold $ M \subset \mathcal{L}_a $, consider the function $ \theta_H: M \to \mathbb{R} $ defined by  
\[
\theta_H(y):=\sup_{w \in T_y(Hy), \|w\|=1} \text{dist}(w, T_y M).
\]
Clearly, $ \theta_H(y) \neq 0 $ if and only if $M$ is $H$-transversal at $y$, and $\theta_H$ is continuous in $y \in M $. This leads to the following result:
\begin{lemma}
     A compact $ C^1 $ submanifold $ M$ of $\mathcal{L}_a $ is $ H$-transversal if and only if there exists $ \eta = \eta(M) > 0 $ such that $ \theta_H(y) \geq \eta $ for all $ y \in M $.
\end{lemma}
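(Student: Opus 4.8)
The plan is to reduce the statement to the elementary fact that a continuous function on a compact space is bounded away from zero exactly when it never vanishes; all the substance has already been packaged into the properties of the auxiliary function $\theta_H$ introduced just above. First I would note that $\theta_H(y)$ is well defined and finite for each $y \in M$: the unit sphere $\{w \in T_y(Hy): \|w\| = 1\}$ is compact, being the unit sphere of a finite-dimensional inner product space, and $w \mapsto \mathrm{dist}(w, T_y M)$ is continuous, so the supremum in the definition of $\theta_H(y)$ is attained. Second, I would record the linear-algebra observation that for subspaces $W, V$ of $T_y\mathcal{L}_a$ one has $\mathrm{dist}(w, V) = 0$ for all unit $w \in W$ if and only if $W \subseteq V$; applied with $W = T_y(Hy)$ and $V = T_y M$ this yields $\theta_H(y) = 0 \iff T_y(Hy) \subseteq T_y M$, equivalently $\theta_H(y) > 0 \iff M$ is $H$-transversal at $y$, which is precisely the remark preceding the statement.

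Granting these, both directions are immediate. If $\theta_H(y) \geq \eta$ for all $y \in M$ with some $\eta > 0$, then $\theta_H(y) > 0$ for every $y$, so $M$ is $H$-transversal at every point, i.e.\ $H$-transversal. Conversely, if $M$ is $H$-transversal, then $\theta_H > 0$ on $M$; since $\theta_H$ is continuous and $M$ is compact, $\theta_H$ attains a minimum at some $y_0 \in M$, and $\eta := \theta_H(y_0) > 0$ does the job.

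The one point that warrants care — and the main, if mild, obstacle — is the continuity of $\theta_H$ on $M$, which was asserted without proof in the remark. It follows because, $M$ being a $C^1$ submanifold and $H$ acting smoothly on $\mathcal{L}_a$, both $y \mapsto T_y M$ and $y \mapsto T_y(Hy)$ are continuous families of linear subspaces of the tangent bundle (one may choose local $C^1$ frames for each); hence $\mathrm{dist}(\cdot, T_y M)$ depends continuously on $y$, uniformly over the compact unit sphere fibre of $T_y(Hy)$, and passing to the supremum over that fibre preserves continuity by a standard uniform-continuity argument. With this in place, the lemma is purely formal.
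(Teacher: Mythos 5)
Your proof is correct and follows exactly the route the paper intends: the paper states this lemma as an immediate consequence of the preceding remark (that $\theta_H(y)\neq 0$ iff $M$ is $H$-transversal at $y$, and that $\theta_H$ is continuous), so the argument is precisely continuity plus compactness giving a positive minimum, as you write. Your added justification of the continuity of $\theta_H$ is a reasonable filling-in of a detail the paper leaves implicit.
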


A right-invariant metric on $G_a$ induces a well-defined Riemannian metric on $\mathcal{L}_a$, and we now slightly modify our notation, using $d$ to denote the resulting path metric on $\mathcal{L}_a$. 

Now let $M$ be as in Theorem \ref{thm:main_vary_both_dyn_redu1}, i.e., compact and $(H^+, F)$-transversal, and choose a positive $ \tau \leq \tau(M) $ as in Lemma \ref{lem:M_sigma_manifold}, satisfying the additional condition:
\begin{equation} \label{equ:distortion}
\Lambda, \Lambda' \in \mathcal{L}_a, \quad 0 \leq t \leq \tau \quad \Rightarrow \quad d(b_t \Lambda, b_t \Lambda') \leq 2 \, d(\Lambda, \Lambda').
\end{equation}
This is possible because $ b_t $, for $ |t| \leq \tau $, is bounded and hence there exists a uniform bound on the amount by which it distorts the Riemannian metric.

Now, suppose that for some $ \Lambda \in \mathcal{L}_a $ and $ \varepsilon > 0 $, there exists $ t \geq 0 $ and $ \Lambda_1 \in M $ such that the distance between $ b_t \Lambda $ and $ \Lambda_1 $ is less than $ \varepsilon $. Choose $ 0 \leq t_1 < \tau $ such that $ t + t_1 = n\tau $ for some $ n \in \mathbb{N} $. Then, from \eqref{equ:distortion}, it follows that
\[
d(b_{n\tau} \Lambda, b_{t_1} \Lambda_1) < 2\varepsilon.
\]
This elementary argument establishes the claim:
\[
F^+_{\tau} \Lambda \cap M_{[0,\tau]} = \emptyset \quad \Rightarrow \quad F^+ \Lambda \cap M = \emptyset.
\]
Hence, Theorem \ref{thm:main_vary_both_dyn_redu1} reduces to the following:
\begin{theorem}
    Suppose that $M$ be a compact $H^+$-transversal submanifold of $\mathcal{L}_a.$ Then the set
    \begin{equation}\label{equ:mainset_xi}
         \left\{\Lambda \in \mathcal{L}_a :F^+_{\tau} \Lambda \cap M=\emptyset \right\}
    \end{equation}
    is HAW.
\end{theorem}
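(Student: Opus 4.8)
The goal is to show that for a compact $H^+$-transversal submanifold $M \subset \mathcal{L}_a$, the set $\left\{\Lambda \in \mathcal{L}_a : F^+_\tau \Lambda \cap M = \emptyset\right\}$ is HAW. By the equivalence of HAW and HPW recorded above, it suffices to exhibit, for every sufficiently small $\beta > 0$, a winning strategy for Alice in the hyperplane percentage game played in coordinate charts of $\mathcal{L}_a$. Since HAW is a local property on the manifold and $M$ is compact, I would reduce to playing inside a fixed coordinate chart $U \cong O \subseteq \mathbb{R}^3$ where the relevant dynamics are under control; this is the standard device used in \cite{KW} and \cite{AGK}. The heart of the matter is then a quantitative non-divergence / transversality estimate: when Bob's current ball $B_i$ has radius $r_i$, and one iterate $b_{n\tau}$ is large enough that $b_{n\tau} B_i$ has ``macroscopic'' size, the preimage under $b_{n\tau}$ of a small neighborhood of $M$ is contained in a bounded number of thin neighborhoods of affine hyperplanes inside $B_i$ — Alice then declares exactly those hyperplanes.

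**The key steps, in order.** First, I would set up the discrete orbit: for each $\Lambda$, the condition $F^+_\tau \Lambda \cap M = \emptyset$ means $b_{n\tau}\Lambda \notin M$ for all $n \geq 0$, so the ``bad'' set to avoid is $\bigcup_{n \geq 0} b_{-n\tau} M$. Second, I would fix the time scale: choose $\sigma = \tau$ (or a multiple) so that the linear map $b_\sigma$ expands the $H^+$-direction by a definite factor $e^\sigma > 1$ while contracting the $H^-$-direction; the $F$-direction is neutral. Since $M$ is $H^+$-transversal and compact, there is $\eta = \eta(M) > 0$ with $\theta_{H^+}(y) \geq \eta$ for all $y \in M$ (the lemma above). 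Third — the core geometric step — I would show: there is a constant $\rho_0 > 0$ and an integer $N_0$ such that if $B = \overline{B}(x, r)$ is a ball and $n$ is the unique integer with $e^{n\sigma} r \in [\rho_0/e^\sigma, \rho_0)$, then $b_{n\sigma}(B \cap b_{-n\sigma}M) $, i.e. the part of $B$ landing in $M$ after time $n\sigma$, when pushed forward, lies within an $O(r)$-neighborhood of at most $N_0$ affine hyperplanes; pulling back, $B \cap b_{-n\sigma}M$ lies in a union of at most $N_0$ neighborhoods of affine hyperplanes of width $\ll \beta r$ provided $r$ is small. This uses transversality to guarantee that $M$, viewed in the expanding chart, looks locally like a graph transverse to the $H^+$-direction, so its preimages concentrate near hyperplanes; the compactness of $M$ bounds the number of sheets. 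Fourth, I would organize Alice's strategy: at stage $i$, with ball $B_i$ of radius $r_i$, let $n_i$ be the scale-matched integer from step three (note $n_i \to \infty$ as $r_i \to 0$, and by the final remark of the Schmidt games section we may assume $r_i \to 0$); Alice plays the $\leq N_i = N_0$ hyperplanes produced by step three with widths $\varepsilon_{i,j} \ll \beta r_i$. Bob must then pick $B_{i+1} \subset B_i$ of radius $\geq \beta r_i$ avoiding at least half of them — but since there are only $N_0$ of them and we can arrange (by a dummy-move / chart-refinement argument, or by iterating within the same scale window) that Bob must avoid *all* of the genuinely obstructing ones over a bounded block of moves, the intersection point escapes $b_{-n\sigma}M$ for every $n$. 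Fifth, I would verify that the countably many scales are handled by the nesting: each $n \geq 0$ is the matched integer for some stage $i$, so the limit point $\Lambda_\infty = \bigcap_i B_i$ satisfies $b_{n\sigma}\Lambda_\infty \notin M$ for all $n$, hence lies in the target set.

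**The main obstacle.** The delicate point is step three: converting $H^+$-transversality of $M$ into a \emph{uniform} bound on the number and width of the hyperplane neighborhoods needed to capture $b_{-n\sigma}M \cap B$ across \emph{all} scales simultaneously. One has to check that the expansion $b_{n\sigma}$ does not create curvature problems — i.e. that the image of a local patch of $M$ under $b_{n\sigma}$, restricted to a piece of macroscopic size $\rho_0$, stays within a controlled neighborhood of a genuine affine hyperplane (not a curved hypersurface), with constants independent of $n$. This is exactly where the transversality hypothesis is used quantitatively, and where the argument mirrors Lemma~4.x of \cite{KW}: the $H^+$-transversality ensures the ``bad'' directions are uniformly bounded away from the tangent space of $M$, so under the hyperbolic renormalization the obstruction flattens at a controlled rate. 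A secondary technical nuisance is that $\mathcal{L}_a$ is a torus bundle over $\mathcal{L}$ rather than Euclidean, so all of this must be carried out in charts with uniform distortion bounds — harmless but requiring care in bookkeeping. Once step three is in place, the game-theoretic bookkeeping in step four is routine and follows \cite{AGK}, \cite{KW} verbatim; I would cite those for the combinatorial lemma that a bounded number of hyperplanes per stage, avoided by Bob at percentage $1/2$, still forces the limit point off all of them after finitely many extra moves.
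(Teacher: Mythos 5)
Your overall strategy is the right one and matches the paper in outline: pass to the hyperplane percentage game in charts (the paper uses exponential charts $\Psi_\Lambda$ on $\mathfrak{g}_a\cong\mathbb{R}^5$, not $\mathbb{R}^3$ --- a minor slip on your part), use transversality to trap the chart-preimage of a neighbourhood of $M$ under the scale-matched iterates of $b_\tau$ inside thin hyperplane neighbourhoods, and let the percentage rule force the limit point off all of them. The paper does not reprove the covering estimate you flag as the ``main obstacle''; it quotes \cite[Lemma 4.6]{AGK}, which gives, for a fixed ball $B$ and each time $k$ whose expansion matches the scale of $B$, a \emph{single} affine hyperplane $L(B,k)$ with $\Psi_\Lambda\bigl(b_\tau^{-k}(\Omega)\bigr)\cap B\subset L(B,k)^{(\beta^{n+1}r)}$, where $\Omega$ is a fixed neighbourhood of $M$.

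The genuine gap is in your scheduling of times versus scales (steps four and five). You attach to each ball $B_i$ the unique scale-matched time $n_i$ and play hyperplanes only for that time, then assert that ``each $n\geq 0$ is the matched integer for some stage $i$'' and that Bob can be forced, ``over a bounded block of moves'' or ``by iterating within the same scale window,'' to avoid all of them. Neither assertion survives scrutiny for small $\beta$ (and HAW requires arbitrarily small $\beta$): the rules only guarantee $r_{i+1}\geq\beta r_i$, so Bob may shrink the radius by a factor close to $\beta$ at every move, making the matched times jump by roughly $\log(1/\beta)/\sigma$ and skipping infinitely many times $n$ whose bad sets $b_{-n\sigma}M$ are then never excluded; moreover nothing forces the play to linger in a given scale window, and in a single move Bob need only avoid half of Alice's hyperplanes. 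This is exactly what the paper's stage/window bookkeeping is for: choose $n$ with $\beta^{-n}<e^{2\tau(2^n-1)}$, define the $j$-th stage by $\beta^{n(j+1)}r_0<r_i\leq\beta^{nj}r_0$ (which necessarily lasts at least $n$ moves since $r_{i+1}\geq\beta r_i$) and the $j$-th window $\mathcal{N}_j=\{k:\beta^{-n(j-1)}\leq e^{2k\tau}<\beta^{-jn}\}$, which has fewer than $2^n$ elements and whose union over $j$ is all of $\mathbb{N}\cup\{0\}$; at the first move of the stage Alice plays one hyperplane neighbourhood $L(B_{i_j},k)^{(\beta^{n+1}r_{i_j})}$ for \emph{every} $k\in\mathcal{N}_j$ (the uniform width $\beta^{n+1}r_{i_j}<\beta r_i$ keeps the moves legal throughout the stage), and the halving rule over the $\geq n$ moves of the stage eliminates all $<2^n$ of them. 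Without this structure --- or an equivalent device covering the skipped times --- your limit point is only known to avoid $b_{-n\tau}M$ along a subsequence of times, which does not yield $F^+_\tau\Lambda\cap M=\emptyset$.
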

\begin{proof}
Let $\beta>0.$ We want to show that the set \eqref{equ:mainset_xi} is $\beta$-HPW. Since we want to play the game on manifolds, before going further, let us fix an atlas of coordinate charts for $ \mathcal{L}_a.$ We denote the Lie algebra of $G_a$ by 
\begin{equation*}
    \mathfrak{g}_a:= \Lie(G_a)= \mathfrak{sl_2}(\mathbb R) \ltimes \mathbb R^2.
\end{equation*}
Given any $\Lambda \in \mathcal{L}_a,$ define the following map $\Phi_{\Lambda}: \mathfrak{g}_a \rightarrow \mathcal{L}_a $ defined by
\begin{equation}
    \Phi_{\Lambda}(\mathbf{x})=\exp(\mathbf x)\Lambda  \quad \text{for}\,\, \mathbf x \in \mathfrak{g}_a.
\end{equation}
Then for any $\Lambda \in \mathcal{L}_a,$  there exists a neighbourhood $W_{\Lambda}$ of $ 0 \in \mathfrak{g}_a$ such that $\Phi_{\Lambda}|_{W_{\Lambda}}$ is one-to-one. We denote 
\begin{equation}
V_{\Lambda}:= \Phi_{\Lambda}(W_{\Lambda}) \,\, \text{and}\,\, \Psi_{\Lambda}=\Phi_{\Lambda}^{-1}|_{V_{\Lambda}}
\end{equation}
We identify $\mathfrak{g}_a$ with $\mathbb R^5$ and since given any $\Lambda \in \mathcal{L}_a$, we have $\Lambda \in V_{\Lambda}$, therefore $\mathcal{L}_{a}=\bigcup_{\Lambda \in \mathcal{L}_a} V_{\Lambda}$. Hence we use the collection $\{(V_{\Lambda},\Psi_{\Lambda}): \Lambda \in \mathcal{L}_a \}$ as an atlas of coordinate charts for $\mathcal{L}_{a}$.\\ 

Now it is enough to show that given any $\Lambda \in \mathcal{L}_a,$ the following set
\begin{equation}\label{equ:redu_traget_set}
\Psi_{\Lambda}(\{\Lambda' \in V_{\Lambda}:F_{\tau}^{+} \Lambda' \cap M = \emptyset\})=\{\mathbf x \in W_{\Lambda}:F_{\tau}^{+}(\exp(\mathbf x)\Lambda) \cap M=\emptyset\}
\end{equation}
is HPW on $W_{\Lambda}.$ \\

Without loss of generality, we assume that 
\begin{equation}\label{equ:beta_etau}
    \beta < e^{-2\tau}.
\end{equation}
Choose $n$ sufficiently large  so that 
\begin{equation}\label{equ:beta_e}
    \beta^{-n} < e^{2\tau(2^n-1)}.
\end{equation}
Pick an arbitrary $\Lambda  \in \mathcal{L}_{a},$ and  suppose that Bob chooses a ball $B_0 \subset W_{\Lambda}$ of radius $r_0.$ 
We want to show that Alice can play the $\beta$-hyperplane percentage game in a fashion so that 
\begin{equation}
    \bigcap_{i}B_i \in \{\mathbf{x} \in W_{\Lambda}:F_{\tau}^{+}(\exp(\mathbf{x})\Lambda ) \cap M=\emptyset\}.
\end{equation}
As mentioned earlier, it is enough to prove that Alice has a winning strategy when the radii of the balls go to zero. Therefore, the game begins with Alice making dummy moves until Bob’s ball first reaches a radius $r_0 \leq \delta.$ Re-indexing if necessary, we denote this ball as $B_1$ with radius $r_0$. Additionally, we observe that $r_0 \geq \beta \delta.$

To define Alice’s strategy, we will partition the game into stages.  We define the 
$j$th stage of the game as the set of indices 
$i \geq 0$ satisfying
\begin{equation}\label{equ:r_i}
    \beta^{n(j+1)}r_0 < r_i \leq \beta^{nj}r_0.
\end{equation}
Then each stage consists of finitely many turns and contains at least $n$ indices. Let us denote by $i_j$ the smallest index in the $j$-th stage, namely, the index at which Bob initiates the $j$-th stage by choosing the ball $B_{i_j}$ in $\mathfrak{g}_a$. In particular, the first stage begins at index $i_0=0$. From the definition of the game rule, it follows that
\begin{equation}\label{equ:r_i_j}
  \beta^{nj+1}r_0 <  r_{i_j} \leq \beta^{nj}r_0.
\end{equation}

Let us consider the following set, which we call the $j$-th window of the game
\begin{equation}\label{equ:beta_e_beta}
    \mathcal{N}_j=\{k \in \mathbb N \cup \{0\}: \beta^{-n(j-1)} \leq e^{2k\tau} < \beta^{-jn}\}.
\end{equation}
From \eqref{equ:beta_etau}, it follows that $e^{2k\tau} \geq \beta^j$ for any $j \geq 0.$ Hence, we have
\begin{equation}\label{equ:N_j}
    \bigcup_{j \in \mathbb N \cup \{0\}} \mathcal{N}_j=\mathbb N \cup \{0\}.
\end{equation}
Note that, given any $k_1,k_2 \in \mathcal{N}_j$ with $k_1 < k_2,$ we have
\begin{equation}
  e^{2(k_2-k_1) \tau}= \frac{e^{2k_2 \tau}}{e^{2k_1 \tau}}  < \frac{{\beta}^{-jn}}{{\beta}^{-n(j-1)}}=\beta^{-n} < e^{2 \tau (2^n-1)},
\end{equation}
by using \eqref{equ:beta_e} and this implies $k_2 -k_1 < 2^n-1.$ Therefore 
\begin{equation}\label{equ:N_j_count}
    \# \mathcal{N}_j < 2^n.
\end{equation}
Using \eqref{equ:r_i_j} and \eqref{equ:beta_e_beta}, for any $k \in \mathcal{N}_j,$ we get that
\[r_{i_j}e^{2k\tau}\in [\beta^{nj+1}r_0 \cdot \beta^{-n(j-1)},\beta^{nj} r_0 \cdot \beta^{-nj}]=[\beta^{n+1}r_0,r_0].\]
This implies 
\[\frac{\beta^{n+1}r_0}{r_{i_j}} \leq e^{2k\tau} \leq \frac{r_0}{r_{i_j}}. \]

Therefore, by invoking \cite[Lemma 4.6]{AGK}, for $G=G_a,$ $\Gamma=\Gamma_a, X=\mathcal{L}_a, H=G_a, Z=M$ and $f(x)=b_{\tau}x$ for $x \in \mathcal{L}_a$, we get a neighbourhood $\Omega=\Omega(\beta^{n+1},r_0)$ of $M$ such that for any $k \in \mathcal{N}_j$, there exists an affine hyperplane $L(B_{i_j},k)$ in $\mathfrak{g}_a$ such that
\begin{equation}\label{equ:Psi_g_tau}
    \Psi_{\Lambda} (b_{\tau}^{-k}(\Omega)) \cap B_{i_j} \subset L(B_{i_j},k)^{(\beta^{n+1}r_{i_j})},
\end{equation}
where $L^{(\varepsilon)}$ denotes the $\varepsilon$-neighbourhood of $L$ in $\mathfrak{g}_a.$
Let the $i_j$th move of Alice be the hyperplane neighbourhoods
\begin{equation}\label{equ:hyp_neig}
    \left\{L(B_{i_j},k)^{(\beta^{n+1}r_{i_j})}:k \in \mathcal{N}_j\right\}.
\end{equation}
For any index $i$ in the $j$-th stage, suppose Bob has chosen the ball $B_i$. Then Alice choose those neighborhoods from \eqref{equ:hyp_neig} which intersect $B_i$. 
By \eqref{equ:r_i_j} and \eqref{equ:r_i}, we have
\[
\beta^{n+1} r_{i_j} \leq \beta^{n+1}\cdot \beta^{nj} r_0 
= \beta \cdot \beta^{ n(j+1)} r_0 < \beta r_i.
\] 
Thus, Alice’s choice is consistent with the rules of the game. Next, we verify that this strategy guarantees Alice’s success. 
From the rules, if $i$ lies in the $j$-th stage, then using \eqref{equ:N_j_count}, we get
 \begin{equation}\label{equ:N_j_sub_count}
     \#\{k \in \mathcal N_j:B_{i+1} \cap {L(B_{i_j}, k)}^{(\beta^{n+1} r_{i_j})} \neq \emptyset \}\leq \frac{\# \mathcal{N}_j}{2^{i+1-i_j}} < 2^{n-(i+1-i_j)}
 \end{equation}
On the other hand, each stage has at least $n$ indices, so in particular, the index $i_j + n - 1$ belongs to the $k$-th stage. 
Substituting $i = i_j + n - 1$ into  \eqref{equ:N_j_sub_count}, we find that
\[
\# \{k \in N_j : B_{i_j+n} \cap L(B_{i_j}, k)^{(\beta^{n+1} r_{i_j})} \neq \emptyset \} < 1.
\]
This shows that
\[
B_{i_j+n} \cap L(B_{i_j}, k)^{(\beta^{n+1} r_{i_j})} = \emptyset
\quad \text{for all } k \in N_j.
\]
Combining this with \eqref{equ:Psi_g_tau}, we deduce that
\[
B_{i_j+n} \cap \Psi_{\Lambda}\big(b_{\tau}^{-k}(\Omega)\big) = \emptyset
\quad \text{for every } k \in N_j.
\]
Consequently, for each $k \in N_j$, the unique limit point
\[
\mathbf{x}_\infty \in \bigcap_{i=0}^\infty B_i
\]
is not contained in $\Psi_{\Lambda}\big(b_{\tau}^{-k}(\Omega)\big)$. Equivalently,
\[
b_{\tau}^k(\exp(\mathbf{x}_\infty)\Lambda) \notin \Omega.
\]
By \eqref{equ:N_j}, it follows that $\mathbf{x}_\infty$ lies in the target set \eqref{equ:redu_traget_set}. Therefore, Alice’s strategy ensures a win.
\end{proof}

\end{document}